\newtheorem{theorem}{Theorem}[section]
\newtheorem{lemma}[theorem]{Lemma}
\newtheorem{corollary}[theorem]{Corollary}
\newtheorem{conjecture}[theorem]{Conjecture}
\theoremstyle{definition}
\newtheorem{definition}[theorem]{Definition}
\newtheorem{example}[theorem]{Example}
\theoremstyle{remark}
\newtheorem{remark}[theorem]{Remark}
\numberwithin{equation}{section}
\begin{document}

\title[A sequel to the adventure of RGB-tilings]{A sequel to the adventure of RGB-tilings \\ to explore the Four Color Theorem}

\author{Shu-Chung Liu}
\address{Institute of Learning Sciences and Technologies, National Tsing Hua University, Hsinchu, Taiwan}
\email{sc.liu@mx.nthu.edu.tw}


\subjclass[2020]{Primary 05C10; 05C15}

\date{\today}


\keywords{Four Color Theorem; Kempe chain; edge-coloring; RGB-tiling; diamond route; canal line; $\Sigma$-adjustment}

        \begin{abstract}
An approach of using RGB-tilings for proving the Four Color Theorem discussed in three previous work is expanded in this paper. A novel methodology and revisions for the methodology in the three aforementioned papers are discussed, and a previously derived result involving three degree-five vertices in a triangular graph is improved. Moreover, a treatment of a novel topic for a graph with six vertices of degree 5 in a dumbbell shape is presented.  
        \end{abstract}

\maketitle

\section{Introduction} \label{sec:Introduction}

This work extends the discussion of using RGB-tilings and red \makebox{(R-)/} green \makebox{(G-)/} blue \makebox{(B-)}tilings for proving the Four Color Theorem in the domain of maximal planar graphs (MPGs) recently proposed in~three papers \cite{Liu2023I, Liu2023II, Liu2023III}. First, the novel methods and ideas introduced in these papers are briefly reviewed in Sections~\ref{sec:Introduction} and~\ref{sec:rotationDualKempe2}. Subsection~\ref{sec:ABetterResult} presents an improvement of a result in ~\cite{Liu2023III}. Finally, Section~\ref{sec:dumbbell} discusses a dumbbell-shaped graph with six vertices of degree 5. All graphs in this paper are simple and connected.

First, some definitions are provided.

An MPG $M$ can by defined with the following two equivalent definitions. 
   \begin{itemize}
   \item If $M$ is planar and if an edge is added linking any two nonadjacent vertices $a$ and $b$ in $M$, then the new graph $M\cup\{ab\}$ is a nonplanar graph.
   \item \ If $M$ is planar with at least two facets, then all facets of $M$ are triangles.
   \end{itemize}
The second definition is often simpler to use in practice. These definitions differ slightly in that the first definition includes the empty graph, the single-vertex graph, and the graph consisting of only one edge; however, the second definition is only applicable if $|M|\ge 3$.   

The Four Color Theorem claims that all planar graphs are 4-colorable. This claim is equivalent to ``all MPG's are 4-colorable'' as demonstrated by Theorem~\ref{RGB1-thm:4colorMPG} in~\cite{Liu2023I}). 
If a unplugged proof (without checking by computer) of the Four Color Theorem were complete, most researchers expect a  contrapositive proof. Consider the following two sets:
   \begin{eqnarray*}
   \mathcal{N}4 &=& \{G \mid \text{$G$ is a non-4-colorable simple connected planar graph.}\} \\
   e\mathcal{MPGN}4 &=& \{EP \in\mathcal{N}4 \mid \text{$EP$ is an MPG with a minimal number of vertices.}\}    
   \end{eqnarray*}
\noindent
The set $e\mathcal{MPGN}4$ is easier to handle than $\mathcal{N}4$. A relation between  $e\mathcal{MPGN}$ and $\mathcal{N}4$ is presented in Section~\ref{RGB1-sec:eMPGN4} in~\cite{Liu2023I}. 

   \begin{quote}
To prove the Four Color Theorem by contrapositive, throughout this paper, we assume that $e\mathcal{MPGN}4$ is nonempty. Many situations with additional restrictions on $EP\in e\mathcal{MPGN}4$ are discussed. For example, \cite{Liu2023III} examines a case in which two or three degree-5 vertices are neighbors in $EP$.  
   \end{quote}
   
\subsection{Vertex-colorings and RGB-edge-colorings} \label{sec:RGB1}

Normally, natural numbers are used to color vertices such that any two adjacent vertices have different colors as indicated by different numbers. A corresponding RGB edge-coloring naturally exists if the graph has a vertex-coloring function $f:V(G)\rightarrow \{1,2,3,4\}$ (4-colorable), regardless of whether $G$ is planar. Besides $f$, the notation $Co[\ldots]$ is a mapping for both vertex-colorings and edge-colorings.

Assume that $K_4$ is a basic graph to be colored with $\{1,2,3,4\},$ as displayed in Figure~\ref{fig:1234EdgeColor}. The leftmost graph $K_4$ has only vertices and edges with no vertex or edge coloring. The two vertex-colorings of $K_4$ of $Co[v_i:i]$ and $Co[v_1:4, v_2:3, v_3:1, v_4:2]$ are shown as the middle and the right graphs in Figure~\ref{fig:1234EdgeColor}, respectively. The corresponding edge-colorings are also displayed. A simple rule can be established as follows: 
    \begin{center}
    \begin{tabular}{rcl}
{1-3 and 2-4 edges} & \multirow{4}{3cm}{shall be painted by} 
& red or R, r;\\
{1-4 and 2-3 edges} &  & green or G, g;\\
{1-2 and 3-4 edges} &  & blue or B, b;\\
{uncertain in RGB}&  & black or bl, sometime gray. 
    \end{tabular}
    \end{center}
   \begin{figure}[h]
   \begin{center}
   \includegraphics{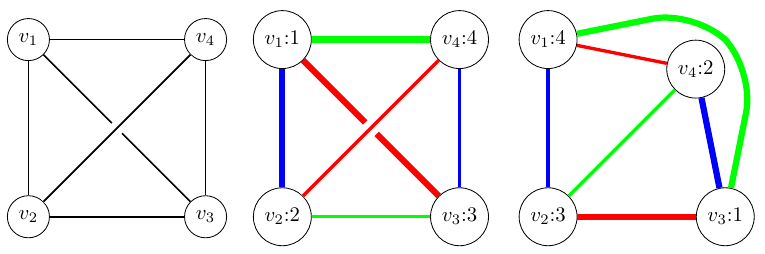}
   \end{center}
   \caption{Corresponding RGB-edge-colorings} \label{fig:1234EdgeColor}
   \end{figure}  
   
Among these colored edges, those incident to 1 are depicted with a thick line. For a fixed edge-color (e.g., red), the (red) thick edges and thin ones form two subclasses. These two subclasses are not only disconnected for red; if we assume nonred edges are black, then $(V(G),\{\text{black edges}\})$ forms a bipartite graph, with the partite sets made by the vertices of these two subclasses separately.  

Assigning the edges incident to 1 as ``thick'' red, green, or blue is not mandatory; the goal is simply to distinguish the two disconnected subclasses colored for each color (red, green, or blue). This disconnection property is the main idea of Kempe's proof. For any graph with a 4-coloring function, we can immediately obtain an RGB edge coloring. However, the reverse is not always true without additional restrictions.

\subsection{Tilings and 4-colorings for MPG's and semi-MPG's}    \label{sec:T4MPG}

A \emph{semi-MPG}, for example, $M$, is nearly an MPG but has some facets, called \emph{outer facets}, that form the border of the structure as a planar graph. Typically, an outer facet of $M$ is an $n$ sided polygon ($n$-gon) with $n\ge 4$. However, in some rare cases, $3$-gons can be the outer facets of $M$ if they are necessary to define the border of the structure of $M$, as discussed in Sections~\ref{RGB1-sec:RGB4coloring} and~\ref{RGB1-sec:Grandline} in~\cite{Liu2023I}. If the $n$-gon has a single outer facet, $M$ is called an $n$-semi-MPG. Generally, an $(n_1,n_2,\ldots,n_k)$-semi-MPG has $k$ outer facets with sizes $n_1,n_2,\ldots,n_k$. Typically, we forbid any two outer facets from sharing an edge because every edge must belong to one or two triangles. Some exceptions to this rule may occur, such as in Lemma~\ref{RGB1-thm:evenoddRGB}(a') in~\cite{Liu2023I}.

In particular, an MPG or an $n$-semi-MPG is called \emph{one piece} because all of the loops in these structures are free loops that can be shrunk to a single point topologically. However, an $(n_1,n_2,\ldots,n_k)$-semi-MPG is not one piece.  A brief sketch of this situation is as follows:

   \begin{center}
   \begin{tabular}{rcl}
  & MPG & \multirow{2}{2.4cm}{{\huge \} }\quad \emph{One Piece}}\\
\multirow{2}{2.5cm}{\emph{semi-MPG}\quad {\huge \{ }} & $n$-semi-MPG\\  
    & $(n_1, n_2, \ldots n_k)$-semi-MPG   
   \end{tabular}
   \end{center}

The first graph in Figure~\ref{fig:reddiamond} is a $(5,7)$-semi-MPG. The basic elements of an MPG or a semi-MPG are triangles. Two triangles sharing an edge constitute a \emph{diamond}. In Figure~\ref{fig:reddiamond}, the edge shared by the two triangles is colored red. This structure is called a \emph{red tile} or a \emph{red diamond}. The middle graph in Figure~\ref{fig:reddiamond} is a $v_2v_4$-diamond, and the \emph{four surrounding edges} of $v_2v_4$ are $v_1v_2$, $v_2v_3$, $v_3v_4$, and $v_4v_1$.  The third graph is a \emph{red half-tile} or a \emph{red triangle}. Green or blue tiles and half-tiles can also exist. For an \emph{RGB-tiling} on $M$, each edge can only be colored with red, green, or blue, and each edge of a triangle must be colored with a different color. We first consider a red tiling \emph{R-tiling} on $M$ for which each triangle has one red edge, with the remaining two edges being initially black.
   \begin{figure}[h]
   \begin{center}
   \includegraphics[scale=0.7]{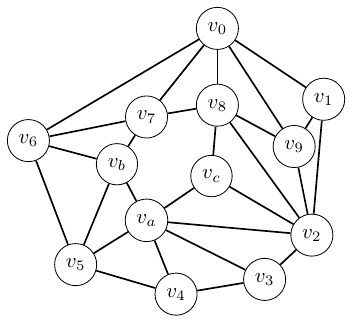}\qquad \quad
   \includegraphics[scale=0.6]{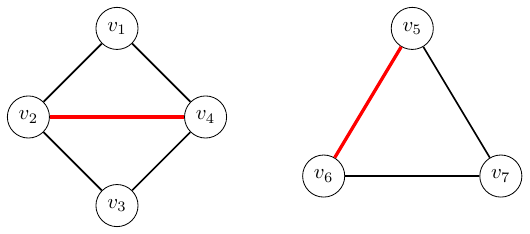}
   \end{center}
   \caption{Red tile $v_2v_4$-diamond and a red half-tile} \label{fig:reddiamond}
   \end{figure}

Let $M$ be an MPG or a semi-MPG. We first try to color exactly one edge of each triangle red; both this process and result are referred to as an \emph{R-tiling} on $M$. 
   \begin{definition} \label{def:Rtiling}
An \emph{R-tiling} is equivalent to a function $T_r:E(M)\rightarrow \{\text{red, black}\}$ such that every triangle of $M$ has one red and two black edges.  An \emph{RGB-tiling}, denoted  by $T_{rgb}:E(M)\rightarrow \{\text{red, green, blue}\},$ is established such that every triangle of $M$ has exactly three colors.
   \end{definition}

Figure~\ref{fig:twoRtilings} presents three R-tilings denoted by I, II, and  III. A 4-coloring function is available for only the background graph for R-tiling II; it does not exist for R-tilings I and III because
of the following reasons:   \begin{figure}[h]
   \begin{center}
   \begin{tabular}{ c c }
   \includegraphics[scale=0.67]{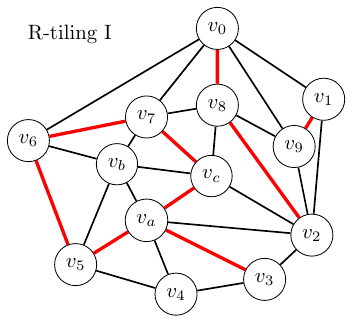}\
   \includegraphics[scale=0.67]{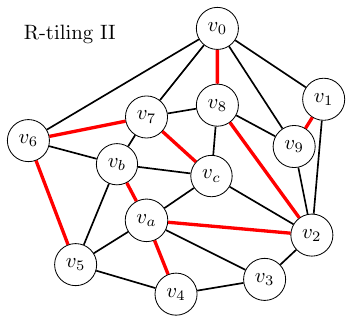}\
   \includegraphics[scale=0.67]{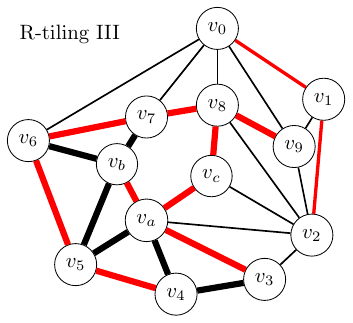}
   \end{tabular}
   \end{center}
   \caption{Three R-tilings; only the middle tiling induces a 4-coloring.}  \label{fig:twoRtilings}
   \end{figure}
   
\noindent
   \begin{center}
   \begin{tabular}{lll}
R-tiling I  & : & There is a red odd-cycle.\\
R-tiling II & : & The R-tiling is \emph{grand} without red odd-cycles. \\
R-tiling III & : & This R-tiling is not \emph{grand}. 
   \end{tabular}
   \end{center}   
Any think red line is assumed to be vertex-colored by $1$ and $3$ alternately, so is any thin red line by $2$ and $4$; thus, a red odd-cycle cannot induce a 4-coloring function. A grand R-tiling is defined as follows:
   \begin{definition}  \label{def:grandGrand}
Let $M$ be an MPG or semi-MPG with an R-tiling $T_r$. Here, two outer facets are allowed to share edges. Because these shared edges belong to no triangles, they can be painted either red or black. Let $M_r$ and $M_{bl}$ denote subgraphs of $M$ comprising all vertices, and only the red and black edges, respectively. The R-tiling $T_r$ is a \emph{grand} if $V(M)$ can be partitioned into two disjoint parts $V_{13}$ and $V_{24}$ such that $M_{bl}$ is a bipartite graph with bipartite vertex sets $V_{13}$ and $V_{24}$ and that no red edges link $V_{13}$ and $V_{24}$. 
   \end{definition} 
   \begin{remark}
The subgraph $M_{bl}$ given by R-tiling III is not bipartite and then not grand because the thick black edges link many pairs of vertices in $V_{13}$. There is another situation that causes $M_{bl}$ not bipartite but $k$-partite with $k\ge 3$. For instance, let us modify R-tiling III by switching the edge-colors as $Co[v_cv_2:r, v_cv_8:bl, v_cv_a:bl]$. Unfortunately, this new R-tiling is still not grand because $M_{bl}$ is tripartite and not bipartite.  
   \end{remark}

   \begin{figure}[h]
   \begin{center}
   \begin{tabular}{ c c }
   \includegraphics[scale=0.9]{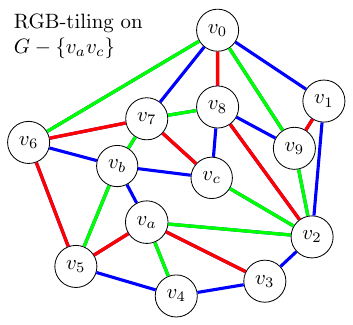}\
   \includegraphics[scale=0.9]{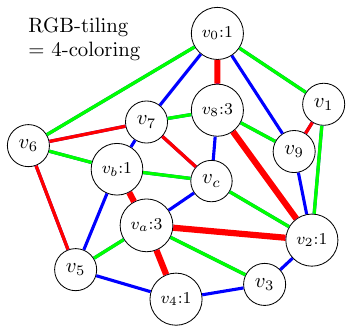}
   \end{tabular}
   \end{center}
   \caption{Two RGB-tilings; the right one induces a 4-coloring.} \label{fig:twoRGBtiling}
   \end{figure}
Let us attempt to convert R-tilings I and II to RGB-tilings. The result is shown in Figure~\ref{fig:twoRGBtiling}. The right graph presents an example of an RGB-tiling derived by filling R-tiling II with appropriate G- and B-tilings; the left graph presents a similar attempt for R-tiling I. However, edge $v_av_c$ must be removed to achieve an RGB-tiling.

These two results are directly demonstrated as follows:
   \begin{theorem}[Theorem for One Piece, Theorem~\ref{RGB1-thm:RtilingOnePiece} in~\cite{Liu2023I}]    \label{thm:RtilingOnePiece}
Every R-tiling on a One Piece (which is either an MPG or an $n$-semi-MPG, $n\ge 3$) is grand.
   \end{theorem}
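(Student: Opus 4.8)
\emph{Proof proposal.} The plan is to split the argument into an easy reduction and then a parity computation. First I would reduce the assertion that $T_r$ is grand to the single claim that the black subgraph $M_{bl}$ is bipartite as an abstract graph. Indeed, assuming $M_{bl}$ is bipartite, fix any proper $2$-colouring of it and let $V_{13}$ and $V_{24}$ be its two colour classes; this partitions $V(M)$ (the degenerate MPGs on at most two vertices, and any isolated vertices, may be placed arbitrarily, so we may assume $|M|\ge 3$, in which case every triangle supplies black edges and both classes are nonempty). All black edges go between $V_{13}$ and $V_{24}$ by construction, so it only remains to check that no red edge joins $V_{13}$ to $V_{24}$. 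On a One Piece every edge lies in at least one triangle, so a red edge $uv$ is a side of some triangle $uvw$ whose other two sides $uw$ and $vw$ are black; in the $2$-colouring $w$ differs from both $u$ and $v$, whence $u$ and $v$ get the same colour. (The same triangle shows $u,v,w$ lie in a common component of $M_{bl}$ via the black path $u\,w\,v$, so distinct components of $M_{bl}$ may be $2$-coloured independently without spoiling this.) Thus for a One Piece, ``$T_r$ grand'' $\Longleftrightarrow$ ``$M_{bl}$ bipartite''.

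Next I would prove $M_{bl}$ is bipartite by contradiction: suppose $M_{bl}$ contains a cycle $C$ of odd length $\ell$. Draw $M$ on the sphere if $M$ is an MPG, or in the plane with the $n$-gon as the unbounded face if $M$ is an $n$-semi-MPG. Then $C$ is a simple closed curve, and because a One Piece is simply connected, $C$ bounds a closed disk region $R$ inside $M$; I would take the side of $C$ that does not contain the outer facet (for an MPG, either side works), so that $R$ is a triangulated disk whose boundary cycle is exactly $C$. Write $t$ for the number of triangles of $R$ and $i$ for the number of vertices of $M$ strictly inside $R$. Euler's formula for a triangulated disk then gives $t=\ell+2i-2$ (a triangulation of an $\ell$-gon with $i$ interior vertices has $\ell+2i-2$ triangles); in particular $t\equiv\ell\pmod 2$.

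The crux is to show that $t$ is even, contradicting the previous line. Every edge of $\partial R=C$ is black, so no triangle of $R$ has its (unique) red side on the boundary; hence the red side of each triangle of $R$ is an interior edge of $R$. An interior edge lies in exactly two triangles of $R$, and if it is red it is forced to be the designated red side of each of them, since a triangle has only one red side. Consequently ``triangle $\mapsto$ the triangle sharing its red side'' is a fixed-point-free involution on the triangle set of $R$, i.e.\ a perfect matching, so $t$ is even. This is the desired contradiction, so $M_{bl}$ has no odd cycle and is bipartite; by the reduction above, $T_r$ is grand.

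The step I expect to require the most care is the topological one: verifying that the odd cycle $C$ genuinely cuts off a triangulated disk $R$ with $\partial R=C$, together with a clean accounting of boundary versus interior vertices. This is precisely where the One Piece hypothesis is indispensable — for a semi-MPG with several outer facets a cycle of $M_{bl}$ may separate two different outer facets, so neither side of it is a triangulated disk, and the conclusion genuinely fails (compare R-tiling~III, whose black subgraph is not bipartite). One should also record in passing that a simple cycle in a planar graph has no pinch point on the boundary of the region it bounds once $M$ is $2$-connected, which holds for all the MPGs and semi-MPGs under consideration (or else one passes to the relevant block).
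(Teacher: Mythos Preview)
The paper does not actually prove this theorem here; it is only quoted from~\cite{Liu2023I}, so there is no in-text argument to compare against. Your proof is correct and self-contained: the reduction of ``grand'' to ``$M_{bl}$ bipartite'' via the triangle containing any red edge is clean, and the parity computation --- $t=\ell+2i-2$ from Euler together with the observation that the red sides of the triangles in $R$ are all interior and hence pair the triangles up --- is a sound way to force $\ell$ even. You correctly isolate the one genuinely delicate point, namely that the odd black cycle bounds a triangulated disk on one side; this is precisely the content of the One-Piece hypothesis and is exactly what fails for an $(n_1,\ldots,n_k)$-semi-MPG, as the R-tiling~III example in Figure~\ref{fig:twoRtilings} illustrates.
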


   \begin{theorem}[The First Fundamental Theorem v1: for R-/RGB-tilings and 4-colorability, Theorem~\ref{RGB1-thm:4RGBtiling} in~\cite{Liu2023I}] \label{thm:4RGBtiling}
Let $M$ be an MPG or an $n$-semi-MPG ($n\ge 4$). Then, the following are equivalent:
   \begin{itemize}
\item[(a)] $M$ is 4-colorable.
\item[(b)] $M$ has an RGB-tiling.
\item[(c)] $M$ has an R-tiling without red odd-cycles.
   \end{itemize}
   \end{theorem}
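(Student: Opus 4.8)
The plan is to establish the cycle of implications $(a)\Rightarrow(b)\Rightarrow(c)\Rightarrow(a)$. The first link is immediate from the recoloring rule already displayed: given a proper $f:V(M)\to\{1,2,3,4\}$, color each edge $uv$ red, green, or blue according to whether $\{f(u),f(v)\}$ lies in $\{\{1,3\},\{2,4\}\}$, $\{\{1,4\},\{2,3\}\}$, or $\{\{1,2\},\{3,4\}\}$. These three classes are exactly the three perfect matchings of $K_4$, and the three edges of any triangle of $M$ (three $2$-subsets of a $3$-subset of $\{1,2,3,4\}$, since the triangle's vertices get distinct colors) meet each matching in exactly one edge; hence every triangle receives all three colors and we have an RGB-tiling.

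For $(b)\Rightarrow(c)$ I would recolor the green and blue edges black; each triangle keeps exactly one red edge, so this is a legitimate R-tiling, and it remains only to check that its red subgraph $M_r$ has no odd cycle. For that I would build a vertex labeling $z:V(M)\to\mathbb{Z}_2$ that is bichromatic on every red edge, by solving the $\mathbb{Z}_2$-linear system demanding $z(u)+z(v)=1$ on every red \emph{and} every green edge and $z(u)+z(v)=0$ on every blue edge. Such a system is solvable iff its prescribed parities sum to $0$ around every cycle of $M$; since the triangular faces span the $\mathbb{Z}_2$-cycle space of $M$ — embedding an $n$-semi-MPG with its $n$-gon as the outer face makes every bounded face a triangle, and those $|E|-|V|+1$ faces form a basis of the cycle space — it suffices to verify this on a single triangle, where the parities are $1+1+0\equiv 0$. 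Any solution $z$ certifies that $M_r$ is bipartite, so $(c)$ holds. (Running the same argument with $\mathbb{Z}_2\times\mathbb{Z}_2$-valued labels, attaching syndromes $(1,1),(0,1),(1,0)$ to red/green/blue and using that these sum to $0$ on each triangle, would instead give $(b)\Rightarrow(a)$ directly; I keep the stated chain.)

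For $(c)\Rightarrow(a)$, let $T_r$ be an R-tiling of $M$ with no red odd cycle. Because $M$ is a One Piece, Theorem~\ref{thm:RtilingOnePiece} tells us $T_r$ is grand: $V(M)=V_{13}\sqcup V_{24}$ with $M_{bl}$ bipartite across this partition and no red edge joining the two parts. Every component of $M_r$ therefore sits inside $V_{13}$ or inside $V_{24}$, and since $M_r$ is bipartite I can properly $2$-color $V_{13}$ with $\{1,3\}$ and $V_{24}$ with $\{2,4\}$ so that every red edge is bichromatic (vertices on no red edge go anywhere in their part). The resulting $f:V(M)\to\{1,2,3,4\}$ is a proper $4$-coloring: a red edge joins two vertices of a single part and is bichromatic by construction, while a black edge runs between the parts, so its ends carry one color from $\{1,3\}$ and one from $\{2,4\}$.

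The genuinely hard ingredient is the grand-ness invoked in $(c)\Rightarrow(a)$, namely Theorem~\ref{thm:RtilingOnePiece}, which we are entitled to assume; with it the rest is bookkeeping. Among the remaining steps, the one that needs care is the cycle-space claim in $(b)\Rightarrow(c)$: one must know that for both an MPG and an $n$-semi-MPG with $n\ge 4$ the triangular faces generate the $\mathbb{Z}_2$-cycle space — this is exactly where the structure of $M$ (a plane graph whose bounded faces, in a suitable drawing, are all triangles, and which is connected enough that those face cycles form a basis) is used, and it is why the small degenerate cases lie outside the statement. I would also watch the trivial-looking edge cases: vertices incident to no red edge in $(c)\Rightarrow(a)$, and the check in $(a)\Rightarrow(b)$ that every $3$-subset of $\{1,2,3,4\}$ does meet each of the three matchings exactly once.
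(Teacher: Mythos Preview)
The paper does not actually prove this theorem: it is quoted verbatim from the earlier paper~\cite{Liu2023I} (as ``Theorem~\ref{RGB1-thm:4RGBtiling} in~\cite{Liu2023I}'') and no argument is given here. So there is no in-paper proof to compare against.

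That said, your proof is correct and is almost certainly the intended one. The chain $(a)\Rightarrow(b)$ is the standard Tait correspondence; your $(c)\Rightarrow(a)$ is exactly what the grand-ness property (Theorem~\ref{thm:RtilingOnePiece}, also quoted from~\cite{Liu2023I}) is designed to deliver, and your reconstruction of the $4$-coloring from the bipartition $V_{13}\sqcup V_{24}$ together with a $2$-coloring of the odd-cycle-free $M_r$ is the natural one. For $(b)\Rightarrow(c)$, your $\mathbb{Z}_2$-potential argument via the face-cycle basis is clean and valid for both MPGs and $n$-semi-MPGs drawn with the $n$-gon as outer face (connectedness suffices for the bounded faces to span the cycle space); an equivalent and perhaps more direct route is to observe that in any RGB-tiling the union of red and green edges is a disjoint union of even cycles (each vertex has even red$+$green degree, and along any such cycle the colors alternate because every triangle uses all three colors), so in particular the red subgraph is bipartite. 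Either way the step is routine once one sees it.

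The only caution I would flag is cosmetic: in $(b)\Rightarrow(c)$ you phrase solvability as ``sum to $0$ around every cycle'', which is correct over $\mathbb{Z}_2$ but relies on the graph being connected so that the face boundaries generate; you note this, and the paper's standing hypothesis that all graphs are connected covers it.
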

   
   \begin{corollary}[Corollary~\ref{RGB1-thm:4RGBtiling2} in~\cite{Liu2023I}] \label{thm:4RGBtiling2}
Let $M$ be an MPG or an $n$-semi-MPG ($n\ge 4$). The graph $M$ is non-4-colorable if either no R-tiling on $M$ exists or every R-tiling on $M$ has at least one red odd-cycle.
   \end{corollary}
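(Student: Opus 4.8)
The plan is to obtain this statement as an immediate contrapositive of the equivalence $(a)\Leftrightarrow(c)$ in Theorem~\ref{thm:4RGBtiling}, so that no new combinatorial work is needed beyond parsing the logical structure carefully. Since $M$ is an MPG or an $n$-semi-MPG with $n\ge 4$, Theorem~\ref{thm:4RGBtiling} applies verbatim and hands us everything we need.

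First I would isolate condition (c): ``$M$ has an R-tiling without red odd-cycles.'' Its negation is ``$M$ has no R-tiling that is free of red odd-cycles,'' and this negation splits into exactly the two alternatives stated in the corollary. Either the set of R-tilings on $M$ is empty, in which case vacuously none of them is red-odd-cycle-free; or $M$ does admit at least one R-tiling, but then every such R-tiling must contain at least one red odd-cycle. These two cases are mutually exclusive, and their disjunction is precisely $\neg(c)$.

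Next I would invoke $(a)\Rightarrow(c)$ from Theorem~\ref{thm:4RGBtiling} and contrapose it: $\neg(c)\Rightarrow\neg(a)$. Thus, whenever $M$ falls into either of the two cases above, $M$ is not 4-colorable, which is exactly the claim of the corollary. (One could equally phrase the argument as: a 4-coloring of $M$ induces, via the RGB rule of Subsection~\ref{sec:RGB1}, an R-tiling in which every red line alternates between colors $1,3$ or between $2,4$, hence carries no red odd-cycle — but that is just the proof of $(a)\Rightarrow(c)$ already packaged inside Theorem~\ref{thm:4RGBtiling}.)

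I do not expect a genuine mathematical obstacle here; the only point requiring care is the bookkeeping in the second paragraph, namely verifying that the disjunction ``no R-tiling'' or ``every R-tiling has a red odd-cycle'' really is the full negation of (c), with the vacuous case handled correctly. All of the substance is carried by Theorem~\ref{thm:4RGBtiling} and, upstream of it, by Theorem~\ref{thm:RtilingOnePiece} for the One Piece case.
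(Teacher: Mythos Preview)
Your proposal is correct and matches the paper's treatment: the statement is labeled a corollary of Theorem~\ref{thm:4RGBtiling} and is recorded without a separate proof, precisely because it is the contrapositive of the implication $(a)\Rightarrow(c)$ with the negation of $(c)$ unpacked into the two cases you describe.
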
  

\subsection{RGB-tilings of Types A, B, C and D on $EP-\{e\}$}

First, we must state a key property:
   \begin{theorem}[Theorem~\ref{RGB1-thm:eMPG4}(b) in~\cite{Liu2023I}] \label{thm:eMPG4}
If  $EP\in e\mathcal{MPGN}4$, any proper subgraph of $EP$ is 4-colorable, and any MPG graph $G$ with $|G|<|EP|$  is 4-colorable.
   \end{theorem}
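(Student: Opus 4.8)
The plan is to prove the second assertion first — it is almost immediate from the definition of $e\mathcal{MPGN}4$ — and then to reduce the first assertion to it. Write $n=|EP|$; since every MPG on at most four vertices is trivially $4$-colorable, $EP\in e\mathcal{MPGN}4$ forces $n\ge 5$. For the second assertion, suppose some MPG $G$ with $|G|<n$ were not $4$-colorable. As all graphs here are simple and $G$ must then have at least three vertices, $G$ is connected, so $G\in\mathcal{N}4$; being also an MPG, $G$ would be an MPG in $\mathcal{N}4$ with fewer vertices than $EP$, contradicting the minimality built into the definition of $e\mathcal{MPGN}4$. Hence every MPG on fewer than $n$ vertices is $4$-colorable.

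For the first assertion, let $H\subsetneq EP$ be a proper subgraph. Either $V(H)\subsetneq V(EP)$, in which case $H\subseteq EP-v$ for any $v\in V(EP)\setminus V(H)$, or $V(H)=V(EP)$ but $E(H)\subsetneq E(EP)$, in which case $H\subseteq EP-e$ for any $e\in E(EP)\setminus E(H)$. Since a subgraph of a $4$-colorable graph is $4$-colorable, it suffices to $4$-color $EP-v$ and $EP-e$ for every vertex $v$ and every edge $e$. The first is easy: $EP-v$ is simple and planar on $n-1\ge 4$ vertices, so by adding edges between non-adjacent vertices as long as planarity is preserved it extends to a maximal planar graph $M^{\ast}$ on the same $n-1$ vertices; by the first definition of an MPG, $M^{\ast}$ is an MPG with $|M^{\ast}|=n-1<n$, hence $4$-colorable by the second assertion, hence so is its subgraph $EP-v$.

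The interesting case is $EP-e$ with $e=uv$, which I would handle by edge contraction. Form $EP/uv$ by identifying $u$ and $v$ into a single new vertex $w$, and let $\widehat{M}$ be the simple graph obtained by deleting all but one copy of each multi-edge created; no loop appears, since $uv$ is the only $u$--$v$ edge of the simple graph $EP$. Contraction preserves planarity, so $\widehat{M}$ is a simple planar graph on $n-1<n$ vertices, hence $4$-colorable by the same extend-to-an-MPG argument as above. Pulling such a $4$-coloring back along the quotient map $V(EP)\to V(\widehat{M})$ (which sends $u,v\mapsto w$ and fixes everything else) gives a coloring $c$ of $V(EP)$ with $c(u)=c(v)$. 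This $c$ is a proper $4$-coloring of $EP-e$: the only vertices sharing a color that could conflict are $u$ and $v$, but $uv$ is exactly the deleted edge, while every edge $ab$ of $EP-e$ is an edge of $EP$ other than $uv$ and therefore projects to a genuine edge of $\widehat{M}$, forcing $c(a)\ne c(b)$.

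I expect the second assertion and the vertex-deletion case to be routine bookkeeping. The one place to be careful is the edge-deletion case: the hypothesis gives minimality of the \emph{vertex} count only, and all $n$-vertex MPGs carry the same number of edges, so there is no ``edge-minimality'' to exploit directly; the contraction trick is what converts the edge deletion into a genuine drop in the number of vertices, and the verification that the pulled-back coloring remains proper works precisely because the endpoints of the contracted edge are no longer adjacent in $EP-e$. One could instead try to exhibit an RGB-tiling of $EP-e$ and apply Theorem~\ref{thm:4RGBtiling}, but the contraction argument is shorter and self-contained.
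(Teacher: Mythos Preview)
Your argument is correct. Note, however, that this paper does not actually supply a proof of Theorem~\ref{thm:eMPG4}: the statement is imported verbatim from~\cite{Liu2023I}, so there is no in-paper proof to compare against line by line. That said, your contraction approach for the edge-deletion case is exactly the mechanism the paper relies on elsewhere: Remark~\ref{re:abmerged} observes that merging the endpoints of $e=ab$ yields a smaller MPG $EP_{a=b}$ whose $4$-coloring pulls back to a $4$-coloring of $EP-\{e\}$, which is precisely your $\widehat{M}$ argument phrased in the paper's language. So your proof is not only valid but also consonant with how the surrounding material uses the result.

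One minor stylistic point: you could streamline the vertex-deletion case by folding it into the edge-deletion case (delete any edge incident to $v$ first), but the two-case split you wrote is perfectly fine and arguably clearer.
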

Given $EP\in e\mathcal{MPGN}4$ and an edge $e=ab\in E(EP)$, $Q:=EP-\{e\}$ is a proper subgraph of $EP$. This 4-semi-MPG $Q$ is 4-colorable. The possible RGB-tilings on $Q$ are interesting. Let us investigate four types of RGB-tilings, namely A, B, C, and D, on $EP-\{e\}$ as follows:
   \begin{figure}[h]
   \begin{center}
   \begin{tabular}{ c c c c}
   \includegraphics[scale=0.85]{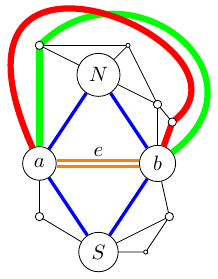} &
   \includegraphics[scale=0.85]
{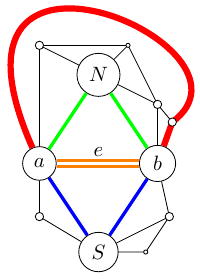} &
   \includegraphics[scale=0.85]{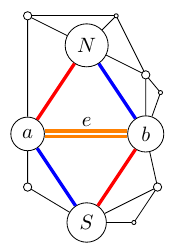} &
   \includegraphics[scale=0.85]{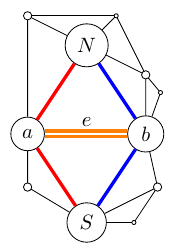}
   \end{tabular}
   \end{center}
   \caption{RGB-tilings of Types A, B, C, D on $EP-\{e\}$} \label{fig:impossible}
   \end{figure}

Because of the symmetry of the colors R, G, and B, every representative type in Figure~\ref{fig:impossible} has 6 \emph{synonyms} obtained by any permutation of R, G, and B on the entire  $EP-\{e\}$. Second, the yellow double-line (edge $e$) is currently an \emph{abandoned edge}. The red curves in Types A and B and the green curve in Type A only are called \emph{Kempe chains} with regard to the $e$-diamond. There is no Kempe chain for Types C and D. Indeed, we can color $e$ by green for Types C and D without causing any problems; thus $e$ is not abandoned for 4-colorability. We call the $e$-diamonds of Types C$^\ast$ and D$^\ast$ respectively after coloring $e$ by green, as well as their 6 synonyms.

For these four types, the fundamental setting is that the four edges surrounding $e$ have either one color or two colors in pairs for the following three reasons, particularly the part (b):
   \begin{lemma}[Lemma~\ref{RGB1-thm:evenoddRGB} in~\cite{Liu2023I}]  \label{thm:evenoddRGB}
Let $M$ be an $n$-/$(n_1,n_2,\ldots,n_k)$-semi-MPG for $n, n_i\ge 3$.
  \begin{itemize}
\item[(a)] If $M$ has an R-tiling, then the number of black edges along $\Omega(M)$ (the set of edges along all outer facets of $M$) must be even. The remaining edges along $\Omega(M)$ are red, and each is associated with a red half-tile.

\item[(a')] Additionally, if two outer facets can share edges in $M$, these shared edges are counted with multiplicity 2 in the index $||\Omega(M)||:=\sum_{i=1}^k n_i$. A single edge that is shared by two outer facets is associated with no triangles; thus, their colors can be freely chosen as red and black. The result of item {\rm (a)} is still valid in this case.

\item[(b)] If $M$ has an RGB-tiling, the three groups of edges along $\Omega(M)$ sorted as red, green, and blue must either all have even or odd cardinality. In particular, if $|\Omega(M)|$ is even, they are all even; if $|\Omega(M)|$ is odd, they are all odd.   
   \end{itemize}
   \end{lemma}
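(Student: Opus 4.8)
The plan is to reduce all three parts to one double-counting argument modulo $2$ over the triangular faces of $M$. For part~(a), I would fix an R-tiling $T_r$ and count the incidences between triangular faces and black edges. Every triangle of $M$ carries exactly two black edges, so the number of such incidences equals $2t$, where $t$ is the number of triangular faces; in particular it is even. Counting the same incidences edge by edge, a black edge interior to $M$ lies on two triangles and is counted twice, whereas a black edge along $\Omega(M)$ lies on exactly one triangle — it already borders an outer facet, and in case~(a) no edge borders two outer facets — and is counted once. Writing $b_{\mathrm{int}}$ and $b_\Omega$ for the numbers of interior and boundary black edges, this gives $2t = 2b_{\mathrm{int}} + b_\Omega$, so $b_\Omega$ is even. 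The remaining edges along $\Omega(M)$ are red because $T_r$ colors every edge red or black; and each such red boundary edge lies on a single triangle and is that triangle's red edge, so the triangle is a red half-tile.

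Part~(a') needs no new idea: an edge shared by two outer facets borders no triangle, so it never enters the count $2t$. Hence again $2t = 2b_{\mathrm{int}} + b^{\circ}$, where $b^{\circ}$ is the number of black edges each bordering exactly one triangle, so $b^{\circ}$ is even; the black contribution to the multiset index $\|\Omega(M)\|$ equals $b^{\circ}$ plus twice the number of black shared edges, which is still even, and the color of a shared edge is indeed unconstrained since no triangle contains it.

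For part~(b), I would bootstrap from~(a) by a recoloring. Given an RGB-tiling $T_{rgb}$ on $M$, repaint every green and every blue edge black. Because each triangle originally had one edge of each of the three colors, it now has exactly one red and two black edges, so the result is a bona fide R-tiling in the sense of Definition~\ref{def:Rtiling}. Applying part~(a) (or~(a')), the number of black edges along $\Omega(M)$ in this R-tiling is even, but these are exactly the green-or-blue boundary edges of $T_{rgb}$; hence the number of green boundary edges and the number of blue boundary edges have the same parity. Symmetrically, recoloring the pairs $\{\text{green},\text{red}\}$ or $\{\text{blue},\text{red}\}$ black shows that all three color counts share one common parity $p$. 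Since $|\Omega(M)|$ is their sum, $|\Omega(M)| \equiv 3p \equiv p \pmod 2$, which is precisely the final assertion: all three are even when $|\Omega(M)|$ is even, and all odd when it is odd.

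The only delicate point — and the closest thing to an obstacle — is the bookkeeping about which faces an $\Omega(M)$-edge can border: I rely on each edge of $M$ bordering exactly two faces, and on a boundary edge bordering exactly one triangle in case~(a) and at most one triangle in case~(a'). Both hold under the standing conventions for semi-MPGs recalled in Section~\ref{sec:T4MPG}, so beyond verifying these conventions apply, everything is a parity count.
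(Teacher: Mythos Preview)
Your argument is correct. The double-counting of triangle--black-edge incidences is exactly the right parity mechanism for (a) and (a'), and the recoloring trick for (b) is a clean way to reduce to (a). The only thing to note is that this paper does not actually supply its own proof of the lemma: it is quoted verbatim from the companion paper~\cite{Liu2023I} and used here as a black box. So there is no in-paper argument to compare against; your proposal would serve perfectly well as a self-contained proof were one needed.
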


   \begin{lemma}
Let $M$ be a 4-colorable MPG or semi-MPG. The RGB-tiling induced a 4-coloring function of $M$ has all diamonds to be Types C$^\ast$ and D$^\ast$.  
   \end{lemma}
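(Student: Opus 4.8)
The plan is to argue entirely locally: fix one diamond of $M$, read off the colors of its five edges from the defining property of an RGB-tiling, and match the result against the pictures of Types~C$^\ast$ and D$^\ast$ in Figure~\ref{fig:impossible}.

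I would fix notation as for the $e$-diamond: write the diamond as the two triangles $\Delta_1=v_1v_2v_4$ and $\Delta_2=v_2v_3v_4$ sharing the edge $e=v_2v_4$, whose four surrounding edges are $v_1v_2,\,v_2v_3,\,v_3v_4,\,v_4v_1$. Let $f\colon V(M)\to\{1,2,3,4\}$ be the given $4$-coloring and $Co[\cdot]$ the RGB-edge-coloring it induces. Applying a permutation of R, G, B to $Co[\cdot]$ if necessary (realized by a suitable permutation of the four vertex colors), I may assume $Co[e]$ is green.

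The heart of the matter is then one observation. Because $Co[\cdot]$ is an RGB-tiling, each of $\Delta_1,\Delta_2$ carries exactly one edge of each color, hence a unique green edge; since $e$ is green and lies in both triangles, $e$ is that green edge of each. Consequently $v_1v_2$ and $v_4v_1$ receive one red and one blue, and likewise $v_2v_3$ and $v_3v_4$ receive one red and one blue. Thus the four surrounding edges of $e$ use only red and blue --- two of each, with one red and one blue in each triangle --- while $e$ itself carries the third color; this is precisely the configuration defining the $e$-diamonds of Types~C$^\ast$ and D$^\ast$ (and their synonyms). The two possible positions of the red pair, namely sharing one of $v_1,v_3$ or lying opposite each other, give the two sub-cases, which I would identify with Types~C$^\ast$ and D$^\ast$ by comparison with Figure~\ref{fig:impossible}. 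As the diamond was arbitrary, every diamond of $M$ is of Type~C$^\ast$ or D$^\ast$.

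An equivalent phrasing is contrapositive: Types~A and B are exactly the $e$-diamond patterns for which \emph{no} color of $e$ makes both incident triangles $3$-colored --- the obstruction being the red (and, in Type~A, green) Kempe chains --- so they can occur only while $e$ is abandoned, whereas in a $4$-coloring-induced RGB-tiling every edge, $e$ included, is colored and every triangle is $3$-colored; since Types~A--D exhaust the cases permitted by the ``one color, or two colors in pairs'' constraint on the surrounding edges (Lemma~\ref{thm:evenoddRGB}(b) applied after deleting $e$), this again yields Type~C or D. The point I expect to require the most care is therefore not a computation but bookkeeping: extracting from Figure~\ref{fig:impossible} the precise content of ``Type~C$^\ast$'' versus ``Type~D$^\ast$'' (together with the compatible vertex-colorings) accurately enough to be sure that the two sub-cases above exhaust all possibilities and that each matches one of the two declared types, with no unlisted configuration slipping through. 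Everything else is immediate from the definition of an RGB-tiling.
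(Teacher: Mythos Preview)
Your argument is correct. The paper itself states this lemma without proof, treating it as immediate from the definitions of the four types and of an RGB-tiling; your local analysis is exactly the verification one would supply if asked to fill in the details. The core point --- that in an RGB-tiling induced by a genuine $4$-coloring the diagonal $e$ is \emph{colored} (not abandoned), hence each incident triangle already has its three colors and the four surrounding edges must use only the remaining two colors, one pair per triangle --- is precisely the content, and your case split on whether the two reds share a vertex or lie opposite correctly distinguishes C$^\ast$ from D$^\ast$. The contrapositive paragraph is unnecessary but not wrong; you may safely drop it.
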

   
The following is a new theorem that improves on the conclusion of the discussion in Sections~\ref{RGB2-sec:ediamond}, \ref{RGB2-sec:NecSufConds}, and~\ref{RGB2-sec:TypeAisSyndrom} in~\cite{Liu2023II}. Here we define the unique cardinality  $\omega:=|EP|$ for any $EP\in e\mathcal{MPGN}4$.

   \begin{theorem}[Fundamental Theorem: necessary  and sufficient conditions]  \label{thm:RGBNesSuf}
Let $M$ be an MPG with $|M|\le \omega$ and given any $e=ab\in E(M)$. Also let $Q:=M-\{e\}$. 
   \begin{itemize}
\item[(a)] $M\notin  e\mathcal{MPGN}4$ if and only if we can find an RGB-tiling of Type C or D  on the 4-semi-MPG $Q$.
\item[(b)] $M\in  e\mathcal{MPGN}4$ if and only if every RGB-tiling on the 4-semi-MPG $Q$ is always Type A or B. 
   \end{itemize}
   \end{theorem}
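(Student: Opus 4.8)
The plan is to reduce the two biconditionals to a single $4$-colorability statement and then prove that statement via a ``restriction/extension'' dictionary between RGB-tilings of $M$ and of $Q$. Throughout I assume $|M|\ge 4$, so that $e$ genuinely lies in a diamond: its two triangular faces have distinct apexes, and deleting $e$ merges them into a $4$-cycle that is the outer $4$-gon of the $4$-semi-MPG $Q$. The first step is to record the one place where the hypothesis $|M|\le\omega$ is used: \emph{an MPG $M$ with $|M|\le\omega$ lies in $e\mathcal{MPGN}4$ if and only if $M$ is non-$4$-colorable}. The forward direction is trivial since $e\mathcal{MPGN}4\subseteq\mathcal{N}4$; conversely, a non-$4$-colorable MPG $M$ lies in $\mathcal{N}4$, so $|M|\ge\omega$ by the minimality built into the definitions of $e\mathcal{MPGN}4$ and of $\omega$, whence $|M|=\omega$ and $M\in e\mathcal{MPGN}4$. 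Given this, parts~(a) and~(b) become the mutually contrapositive pair ``$M$ is $4$-colorable $\iff$ $Q$ has an RGB-tiling of Type C or D'' and ``$M$ is non-$4$-colorable $\iff$ every RGB-tiling of $Q$ is of Type A or B,'' provided we know that \emph{every} RGB-tiling of $Q$ realizes its $e$-diamond as one of Types A, B, C, D (up to synonym). That classification is the parity restriction of Lemma~\ref{thm:evenoddRGB}(b) applied to $\Omega(Q)$ --- which forces the four edges surrounding $e$ to be monochromatic or coloured with two colours each occurring on a pair of them --- together with the case analysis of~\cite{Liu2023II}, in which ``Type C or D'' are precisely the configurations where $e$ admits a colour completing both triangles of the diamond.

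The heart of the argument is the dictionary itself. If $T$ is any RGB-tiling of $M$, then forgetting the colour of $e$ yields an RGB-tiling of $Q$: every triangle of $Q$ is a triangle of $M$ and is left valid, while the new quadrilateral face carries no $3$-colour constraint. Moreover, in $T$ the two non-$e$ edges of each triangle of the $e$-diamond are distinctly coloured and avoid the colour of $e$, so the four surrounding edges use exactly the two colours other than that of $e$; hence the $e$-diamond of the restricted tiling is of Type C or D (that is, Type C$^\ast$ or D$^\ast$ with $e$ removed), never one of the non-extendable Types A or B. Conversely, given an RGB-tiling of $Q$ whose $e$-diamond is of Type C or D, the four surrounding edges use only two colours, so assigning $e$ the unique third colour makes both triangles of the $e$-diamond properly $3$-coloured and leaves every other triangle (a triangle of $Q$) valid; the result is an RGB-tiling of $M$. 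Combining with Theorem~\ref{thm:4RGBtiling}, which says $M$ has an RGB-tiling iff $M$ is $4$-colorable, this last step is exactly the ``colour $e$ by green'' remark for Types C and D.

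Assembling: $M$ is $4$-colorable $\iff$ $M$ has an RGB-tiling $\iff$ some RGB-tiling of $Q$ extends to $M$ $\iff$ some RGB-tiling of $Q$ is of Type C or D $\iff$ not every RGB-tiling of $Q$ is of Type A or B; with the reduction of the first paragraph, the first equivalence is part~(a) and its negation is part~(b). The degenerate case in which $Q$ admits no RGB-tiling does not occur under the hypotheses, since if $M$ is non-$4$-colorable then $M\in e\mathcal{MPGN}4$, so the proper subgraph $Q$ is $4$-colorable by Theorem~\ref{thm:eMPG4} and hence carries an RGB-tiling by Theorem~\ref{thm:4RGBtiling}. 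I expect the only genuinely delicate point to be bookkeeping: making the Type A/B/C/D classification precise enough that ``extendable'' coincides exactly with ``Type C or D,'' and checking that the restriction of an RGB-tiling of $M$ cannot land in the non-extendable patterns --- both of which are already implicit in~\cite{Liu2023II} and in Lemma~\ref{thm:evenoddRGB}(b). No new combinatorial obstacle arises; relative to~\cite{Liu2023II}, the content of the theorem is the exploitation of $|M|\le\omega$ to upgrade implications to equivalences.
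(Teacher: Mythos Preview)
Your proposal is correct. The paper does not supply a proof of this theorem in the present text: it states the result as ``a new theorem that improves on the conclusion of the discussion in Sections~\ref{RGB2-sec:ediamond}, \ref{RGB2-sec:NecSufConds}, and~\ref{RGB2-sec:TypeAisSyndrom} in~\cite{Liu2023II}'' and then moves on to remarks. What you have written is exactly the intended argument implicit in that sentence: the cited sections of~\cite{Liu2023II} establish the restriction/extension dictionary (an RGB-tiling of $M$ restricts to Type~C or~D on $Q$, and conversely Type~C or~D extends by assigning $e$ the missing colour), while the only new ingredient here---the hypothesis $|M|\le\omega$---is used precisely as you use it, to identify ``$M\in e\mathcal{MPGN}4$'' with ``$M$ is non-$4$-colorable'' and thereby upgrade the one-directional implications of~\cite{Liu2023II} to biconditionals. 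Your handling of the vacuous case (showing $Q$ always carries some RGB-tiling under the hypotheses, via Theorem~\ref{thm:eMPG4} when $M$ is non-$4$-colorable and via restriction when $M$ is $4$-colorable) is also the right closure of the argument.
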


	\begin{remark} \label{re:Mleomega}
Why this theorem sets $|M|\le \omega$ but not focuses only on $|M|= \omega$? Given a particular $EP \in  e\mathcal{MPGN}4$ with some sort of requirements, we might modify $EP$ to be a smaller MPG $M$. If we can show Theorem~\ref{thm:RGBNesSuf}(b) for $M$ with help from all kinds of RGB-tilings on $EP-\{e\}$, then we reach a contradiction for $M \in  e\mathcal{MPGN}4$ and $|M|<\omega$.
	\end{remark}

	\begin{remark} \label{re:abmerged}	
A single $e$-diamond, say $e=ab$, is usually defined for $EP\in e\mathcal{MPGN}$ together with an RGB-tiling $T$ on $EP-\{e\}$. This tiling for $e$-diamond may be either Type A or B, and this $e$ can be any edge in $E(EP)$. If we merge $a$ and $b$ as one vertex, the new $T'$ induces a usable 4-coloring function on this smaller MPG $EP_{a=b}$. We can extend the definition of a single $e$-diamond for an R-tiling $T_r$ on $EP$. 
On a non-4-colorable MPG, any $T_r$ has at least a red odd-cycle; On $EP$, there is a $T_r$ with exactly one red odd-cycle; On a 4-colorable MPG, 
there is a $T_r$ without red odd-cycles; On $EP_{a=b}$, every $T_r$ without red odd-cycles must have at least a red even-cycle passing through the particular vertex $a=b$. The collection $\{EP_{a=b}\}_{ab\in A}$ for a particular $A\subseteq E(EP)$ is an interesting set to study. 
	\end{remark}

\subsection{Synonym, equivalence, and congruence for $
\mathcal{RGBT}(M)$}
\label{sec:SynEquivCong}

Given a fixed $EP\in e\mathcal{MPGN}4$, let the topic of discussion $T\!D:=(\{u_1, u_2,\ldots, u_k\};\text{requirements})$ comprise some selected vertices from $V(EP)$ where \emph{{requirements}} represent the condition of vertices $u_1, u_2,\ldots, u_k$. Here, $T\!D$ also represents the induced subgraph of the vertices of $T\!D$. Typically, we require this subgraph $T\!D$ to be connected and solid, where solid indicates that it has no holes. The \emph{border} $\Omega$ (sometimes $\Phi$) is a subgraph induced by the vertex set comprising all neighbors surrounding $T\!D$. Clearly, $\Omega$ is a cycle, unless $T\!D$ is unusual. Let  $\Sigma:=\Omega \cup T\!D$ and $\Sigma':=EP-T\!D$. Clearly, $\Sigma\cap\Sigma'=\Omega$. They are called the \emph{interior} and \emph{exterior} of $(EP; \Omega)$, and sometimes denoted by $\Sigma(EP; \Omega)$ and $\Sigma'(EP; \Omega)$ respectively.

If $\Sigma:=e$-diamond as in the last section, this is a special case with the vertex set of $T\!D$ empty. We focus on the $e$-diamond with $\Omega$ comprising the four surrounding edges of $e$.   The second basic case is $T\!D:=(\{v\};\deg(v)=5)$ and is detailed in Section~\ref{RGB3-sec:Gcanallines} in~\cite{Liu2023III}. The most interesting $T\!D$ is $(\{a,b: \text{adjacent}\};\deg(a,b)=5)$ and $(\{a,b,c: \text{in a triangle}\};\deg(a,b,c)=5)$.

The following discussion concerns the topics of ``the same in some sense'' or ``the difference in certain levels'' and involves three general definitions. Let $M$ be an MPG or a semi-MPG, and let $\mathcal{RGBT}(M)$ ($\mathcal{RT}(M)$) be the set of all RGB-tilings (R-tilings) on $M$.

   \begin{itemize}
\item Synonym: Any $T \in\mathcal{RGBT}(M)$ has six \emph{synonyms}, including itself, obtained by interchanging red, green, and blue over the entire graph $M$.  The synonym relationship is a basic idea and trivial. Let $\langle T \rangle$ denote the set of six synonyms of $T$, and any two synonyms $T$ and $T'$ are equated by $T \overset{\text{\tiny syn}}{=} T'$.  The angle brackets may be omitted in the notation. 
\item Equivalence: First, we shall accept each set of six synonyms as one element. Let us use Type A as an example. The most crucial part of Type A is the pair $(K_r|_a^b,K_g|_a^b)$ of Kempe chains in $\Sigma'$. This major structure  in $\Sigma'$ is called the \emph{skeleton} of Type A. The \emph{skeleton} of Type B has only one Kempe chain, namely $K_r|_a^b$. A \emph{skeleton} comprises red, green, or blue connections among some vertices along $\Omega$; for example, $K_r|_a^b$ makes a red connection between $a$ and $b$ in $\Sigma'$ for Types A and B. This connection is  mandatory because $EP\in e\mathcal{MPGN}4$; however, different $T, T' \in  \mathcal{RGBT}(Q)$ might have different $K_r|_a^b$ and $K'_r|_a^b$.  If $T$ and $T'$ are both Type A (or Type B), they shall share the same \emph{skeleton} and are therefore \emph{equivalent}, denoted by $T \equiv_{\Omega} T'$ under $\Omega$. That is, they have the same sketch for $(K_r,K_g)$ and $T|_\Omega = T'|_\Omega$ for edge-colors. All elements of $\langle T \rangle$ comprise an equivalence class denoted by $[T]$. 
\item Congruence: This new relation requires accepting $\langle T \rangle$ or $[T]$ as one element. However, we shall start with two RGB-tilings $T_a$ and $T_b$ as representatives of $[T_a]$ and $[T_b]$ respectively. We say $T_a$ and $T_b$ are \emph{congruent}, denoted by $T_a  \cong T_a$, if $T_b$ can be obtained from $T_a$ by performing a sequence of vertex-color-switching (VCS) and edge-color-switching (ECS) operations. Furthermore, we can say $[T_a]  \cong [T_a]$.  Kempe's classic proof uses VCS; here, ECS is used along a (generalized) canal line or ring, as detailed in Sections~\ref{RGB3-sec:Gcanallines} and~\ref{RGB3-sec:rotationDualKempe2} in~\cite{Liu2023III}. Many examples are also presented in the remainder of this paper.  
   \end{itemize}

\section{Kempe chains around two adjacent vertices of degree 5 in $EP$} \label{sec:rotationDualKempe2}

One interesting $EP\in e\mathcal{MPGN}4$ is that with two adjacent vertices, say $a$ and $b$, of degree 5. ECS can be performed on or along generalized canal rings around $a$ and $b$ to obtain many Kempe chains (skeletons) with different statuses.

Suppose that the topic of discussion $T\!D:=(\{a,b: \text{adjacent}\};\deg(a,b)=5)$, and $e:=ab$. Surrounding $T\!D$ or $e$ is the \emph{border} $\Phi$, which is a cycle with $\Phi:=v_1$-$v_2$-$c$-$v_4$-$v_5$-$d$-$v_1$ in this case. Because $\deg(a,b)=5$, let ``$55$''  denote this particular $EP$. 

The two graphs in Figure~\ref{fig:KempeCRo2deg5basic} are the initial RGB-tilings of $EP$ with a Type-A $e$-diamond under equivalence. The subscripts $\alpha$ and $\beta$ are simply labels to distinguish them. Clearly, $[T_\alpha^{55}]\not\equiv [T_\beta^{55}]$. The equivalence class $[T_\alpha^{55}]$ of the RGB-tilings is clearly symmetric with respect to both the vertical and horizontal lines; so is the class $[T_\beta^{55}]$.

Because we assume this $55$-$EP$ exists and by Theorems~\ref{thm:4RGBtiling} and~\ref{thm:eMPG4}, at least one of $[T_\alpha^{55}]$ and $[T_\beta^{55}]$ exists. However, we will demonstrate that $[T_\alpha^{55}]\cong [T_\beta^{55}]$; that is, through a sequence of ECS operations, one can be obtained from the other. This congruence identity shows that $[T_\alpha^{55}]$ and $[T_\beta^{55}]$ coexist. 

   \begin{figure}[h]
   \begin{center}
   \includegraphics[scale=0.9]{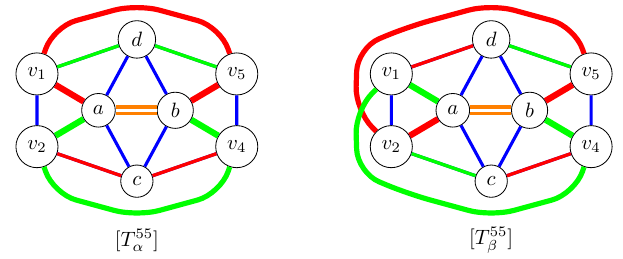}
   \end{center}
   \caption{Two initial RGB-tilings of $EP$ with $T\!D =55$} \label{fig:KempeCRo2deg5basic} 
   \end{figure}

\subsection{Let $e$-diamonds and Kempe chains roll} \label{sec:RnR}

Let  $\Sigma:=\Phi \cup T\!D$ and $\Sigma':=EP-T\!D$. Clearly, $\Sigma\cap\Sigma'=\Phi$.
Starting with the initial status $S_0:=[T_\alpha^{55}]$, we perform ECS 10 times consecutively in accordance with the red and green dashed lines in Figure~\ref{fig:KempeCRo2deg5}. The figure shows the rotation of many $e$-diamonds or Kempe chains around the vertices $a$ and $b$ or around $\Phi$, respectively.   
   \begin{figure}[h]
   \begin{center}
   \includegraphics[scale=0.82]{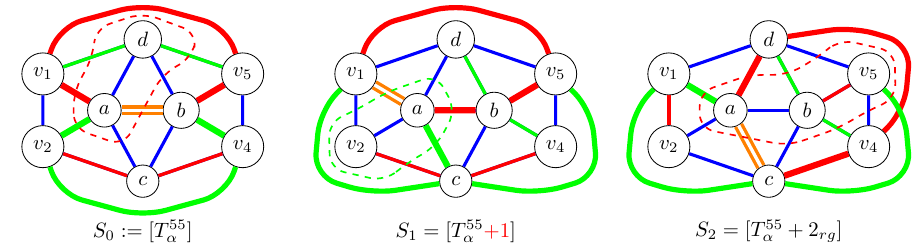}
   \includegraphics[scale=0.82]{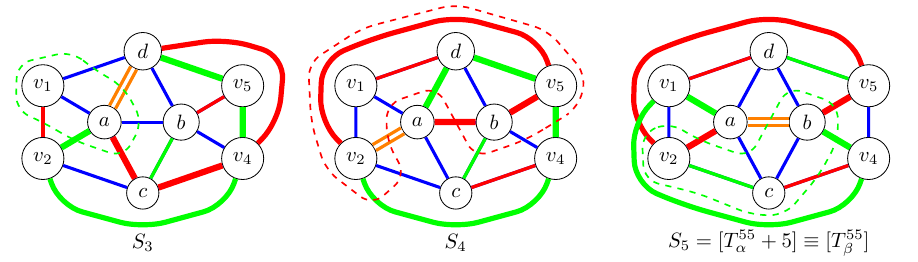}
   \includegraphics[scale=0.82]{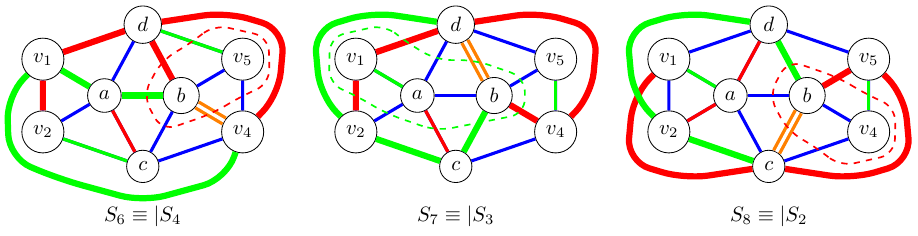}
   \includegraphics[scale=0.82]{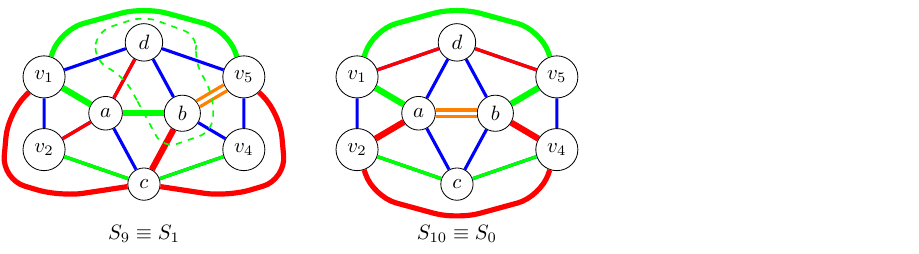}
   \end{center}
   \caption{Rock-n-roll around $(\{a,b\};\deg(a,b)=5)$} \label{fig:KempeCRo2deg5} 
   \end{figure}e

Each red/green dashed line is a generalized canal ring, denoted by $rGCL$/$gGCL$. Normal red (green) dashed lines lie along a canal line (CL) that has two red (green) canal banks on its two sides, and this dashed line alternately crosses the green and blue (red and blue) edges.  These lines are generalized because inside $\Sigma$, this red (green) dashed line might cross one or more yellow double-lines and red (green) edges. A discussion of generalized and normal canal lines or rings is presented in Section~\ref{RGB1-sec:Grandline} in~\cite{Liu2023I} and Section~\ref{RGB3-sec:Gcanallines} in~\cite{Liu2023III}. ECS can simply be performed along a red generalized canal ring $rGCL$ as follows:
   \begin{itemize}
\item Switch the colors of the green and blue edges for the normal part of each red canal line  
\item Switch the colors of the red and yellow double-line edges for the generalized'' segment
   \end{itemize} 
This rule can be easily modified to obtain the process for a green or blue generalized canal ring, as demonstrated by the 10 ECS processes presented in Figure~\ref{fig:KempeCRo2deg5}.

   \begin{remark}
Some details in Figure~\ref{fig:KempeCRo2deg5} should be further explained. By symmetry the graph $S_0$ includes two versions of $rGCL$'s that turns around vertex $a$ or $b$, respectively. Hence, both $K_g|_c^{v_1}$ and $K_g|_c^{v_5}$ are in $S_1$. Moreover, although the graph for $S_1$ seems to have $\deg(c)=6$, it is also possible that $\deg(c)=5$ if $K_g|_c^{v_1}$ and $K_g|_c^{v_5}$ share one green edge incident to $c$. In $S_2$, the unique red Kempe chain can be either $K_r|^d_{v_4}$ or $K_r|^d_{c}$. In $S_4$, the unique green Kempe chain can be $K_g|_{v_2}^{v_4}$, $K_g|_{v_2}^{v_5}$ or  $K_g|_{v_2}^{d}$. However, the choose of these five Kempe chains does not affect our argument in Figure~\ref{fig:KempeCRo2deg5}. Additional remarks are provided in Section~\ref{RGB3-sec:rotationDualKempe2} in~\cite{Liu2023III}.
   \end{remark}

According to Figure~\ref{fig:KempeCRo2deg5}, we have 
  \begin{equation} \label{eq:T55S09}
[S_0=T_\alpha^{55}]_\Phi \cong [S_1]_\Phi \cong \cdots \cong [S_5=T_\beta^{55}]_\Phi \cong \cdots \cong [S_{10}]_\Phi\equiv [T_\alpha^{55}]_\Phi.
  \end{equation} 
We have $[S_{10}]_\Phi\equiv [T_\alpha^{55}]_\Phi$ because they share the same skeleton; however, they may represent two different  RGB-tilings on $EP-\{ab\}$. This difference does not affect the argument. 

Equation~\ref{eq:T55S09} is critical because it indicates that for this type of $EP\in e\mathcal{MPGN}4$ with two adjacent vertices of degree 5, we must only address any one from $S_0$ to $S_5$; $S_6$ to $S_{10}$ are \emph{symmetric} RGB-tilings.  

\subsection{Symmetric RGB-tilings and $ATLAS$ for $T\!D:=(\{a,b\};\deg(a,b)=5)$}  \label{sec:ATLAS}

The notations $|S_\ast$ and $\underline{S_\ast}$ denote the reflection images of $S_\ast$ with respect to the vertical and horizontal lines, respectively. Moreover, $|\underline{S_\ast}$ indicates two reflections. 
Let $[S_\ast]_\text{sym}$ be the equivalence class of these four reflection (\emph{symmetric}) images of $S_\ast$. Typically, $[S_\ast]_\text{sym}$ has four different elements, but both $[T_\alpha^{55}]_\text{sym}$ and $[T_\beta^{55}]_\text{sym}$ have only one element. On the basis of this fact (only one element) and Equation~\ref{eq:T55S09}, we derive the next lemma. This lemma includes a new member $[S_x]$ (shown in Table~\ref{tb:summary}) that is described in Remark~\ref{RGB3-re:Sx0} in~\cite{Liu2023III}. The reader can immediately tell how special $[S_x]$ is.
   \begin{lemma}[Lemma~\ref{RGB3-thm:CongruentEachOther} in~\cite{Liu2023III}]   \label{thm:CongruentEachOther}
All elements in $\{[S_0], [S_1], \ldots, [S_9], [S_x]\}_\text{sym}$ are congruent to each other. The subscript $\text{sym}$ indicates that the set comprises all symmetric images with respect to the vertical and horizontal lines. 
   \end{lemma}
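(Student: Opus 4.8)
The plan is to build on Equation~\ref{eq:T55S09}, which already establishes the congruence chain $[S_0]_\Phi \cong [S_1]_\Phi \cong \cdots \cong [S_{10}]_\Phi$ among the consecutive ECS-images, together with the observation that both $[T_\alpha^{55}]_\text{sym}$ and $[T_\beta^{55}]_\text{sym}$ collapse to a single element (they are invariant under the two reflections). Since $S_0 = T_\alpha^{55}$ and $S_5 = T_\beta^{55}$, the statement $[S_0]_\text{sym} \cong [S_5]_\text{sym}$ is immediate; the work lies in bringing the intermediate $[S_1], \ldots, [S_4]$ and the remaining $[S_6], \ldots, [S_9]$ — and the exceptional $[S_x]$ — into the same congruence class. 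First I would observe that congruence $\cong$ is generated by VCS and ECS operations, both of which are reversible, so $\cong$ is an equivalence relation; hence it suffices to link each listed class to one fixed class, say $[S_0]$.

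Next I would handle $[S_6], \ldots, [S_9]$ via symmetry. Reflection in the vertical or horizontal line carries an ECS sequence to an ECS sequence (a generalized canal ring reflects to a generalized canal ring), so the reflected chain $|S_0 \cong |S_1 \cong \cdots$ is again valid. Because the rotation in Figure~\ref{fig:KempeCRo2deg5} is set up so that $S_6, \ldots, S_{10}$ are the mirror images of $S_4, \ldots, S_0$ (this is the content of the sentence following Equation~\ref{eq:T55S09}: ``$S_6$ to $S_{10}$ are symmetric RGB-tilings''), each $[S_{5+i}]$ for $i=1,\ldots,5$ equals $[\,\text{reflection of } S_{5-i}]$ up to equivalence, hence lies in $[S_{5-i}]_\text{sym}$. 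Combined with the forward chain this folds $[S_6],\ldots,[S_9]$ onto $[S_1],\ldots,[S_4]$ at the level of $\text{sym}$-classes, so every $[S_j]_\text{sym}$ with $0\le j\le 9$ is congruent to $[S_0]_\text{sym}$.

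It remains to incorporate $[S_x]$, the special member pulled from Remark~\ref{RGB3-re:Sx0} in~\cite{Liu2023III}. The approach here is to exhibit one explicit ECS (or short ECS-plus-reflection) that carries a representative of $[S_x]$ to a representative of one of the $[S_j]$ already in the class — presumably the one it is ``most special'' relative to, as the remark in the excerpt hints — and then invoke transitivity. Concretely I would draw the relevant generalized canal ring around $a$ and $b$ in the $S_x$-tiling, apply the ECS rule stated above (swap green/blue on normal red-canal segments, swap red/yellow on the generalized segments, with the obvious recoloring for a green ring), and verify the resulting skeleton matches some $[S_j]$; if a direct match fails, compose with a reflection, which is legitimate since reflections preserve $\cong$-classes pointwise up to the $\text{sym}$ identification.

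\textbf{Main obstacle.} The delicate point is not the abstract bookkeeping — once the chain \eqref{eq:T55S09} and the reflection symmetry are in hand, transitivity does the rest — but rather the verification that the ECS operations used are genuinely well-defined on $EP$: one must check that each generalized canal ring closes up, that the $\Sigma$-interior adjustments (the passages through yellow double-lines and same-color edges) are consistent with an honest RGB-tiling on $EP-\{ab\}$, and in particular that the ambiguities flagged in the remark after \eqref{eq:T55S09} (whether $\deg(c)=5$ or $6$, and which of several Kempe chains realizes $S_2$ or $S_4$) do not obstruct any step. The $[S_x]$ case is where this is most at risk, since $S_x$ is the ``very special'' configuration and the ECS taking it into the main chain may require the generalized-segment rule in its full strength; confirming that this one move lands in the right equivalence class is the step I expect to demand the most care.
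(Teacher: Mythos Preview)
Your approach matches the paper's: the lemma is stated here as a citation from~\cite{Liu2023III}, and the only justification given in this paper is the sentence immediately preceding it --- ``On the basis of this fact (only one element) and Equation~\ref{eq:T55S09}, we derive the next lemma'' --- which is exactly the combination (chain~\eqref{eq:T55S09} plus reflection-invariance of $S_0=T_\alpha^{55}$ and $S_5=T_\beta^{55}$) that you build on. One minor simplification: you do not need to fold $S_6,\ldots,S_9$ onto $S_4,\ldots,S_1$ via mirror identification, since \eqref{eq:T55S09} already gives $S_0\cong S_j$ directly for every $j\le 10$; the reflection argument is needed only to pull the \emph{other} three symmetric images $|S_j,\ \underline{S_j},\ |\underline{S_j}$ into the class, and that follows at once from $|S_0=\underline{S_0}=S_0$ applied to the reflected chain, just as you indicate. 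For $[S_x]$ the paper likewise defers the explicit ECS link to Remark~\ref{RGB3-re:Sx0} in~\cite{Liu2023III}, so your plan to exhibit one ECS from $S_x$ into the main chain is precisely what is done there.
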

This lemma also suggests that for $EP\in e\mathcal{MPGN}4$ with two adjacent vertices of degree 5, only one of $S_0$ to $S_5$ must be analyzed, in addition to $S_x$. However, this lemma is not complete, because we have not yet considered the last special RGB-tiling $X_2$ in $EP$; therefore, we now study $ATLAS$ which is the collection consisting of all possible $Co(\Sigma)$ allowed using yellow double-lines inside $\Sigma$.

Let Y be the abbreviation of ``yellow double-line''. An edge-coloring function $T :E(M)\rightarrow \{\text{r, g, b, Y}\}$ is called an \emph{eRGB-tiling} on an MPG or semi-MPG $M$ if 
	\begin{itemize}
\item no two Y's in a triangle;
\item the function $T$ restricted to subgraph $M-T^{-1}(Y)$, a semi-MPG allowed two outer facets sharing edges, is an RGB-tiling;
\item each $e$-diamond (associated with a yellow double-line) is either Type A or B.
	\end{itemize}
Now the set $e\mathcal{RGBT}(M)$ is defined naturally.

By examining all possible normal edge-colors without Y along the border $\Phi$, say $Co(\Phi)$, we can obtain $ATLAS$ consisting of 15 eRGB- or RGB-tilings for $T\!D:=(\{a,b\};\deg(a,b)=5)$, where 6 of them are given in Figure~\ref{fig:KempeCRo2deg5} under $[\ast]_\text{sym}$. The background of $ATLAS$ is $EP$ with this $T\!D$, and by induction we can assume that $EP-T\!D$ is 4-colorable and then many different RGB-tilings on $\Sigma'$; however, $Co(\Phi)$ is critical. For convenience, in the remainder of this subsection, {\bf  $[\ast]$ is used as an abbreviation for $[\ast]_\text{sym}$}.

Let the array $(\#r,\#g,\#b)$ denote the numbers of red, green, and blue edges along $Co(\Phi)$; because of Lemma~\ref{thm:evenoddRGB}, these can only be $(0,0,6)$, $(0,2,4),$ and $(2,2,2)$ up to synonyms. The cases of $(0,0,6)$ and $(0,2,4)$ are easy, and their solutions are systematically demonstrated in Table~\ref{tb:summary}.
\begin{table}[h]
\centering
\begin{tabular}{ |c|c|c|c| } 
 \hline
\includegraphics[scale=0.85]{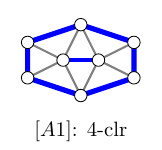} 
&
\includegraphics[scale=0.85] {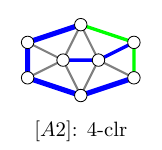}
& 
\includegraphics[scale=0.85] {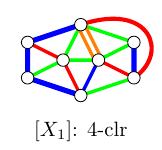} 
& 
\includegraphics[scale=0.85] {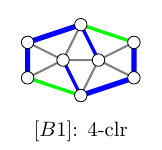}
\\
 \hline 
\includegraphics[scale=0.85] {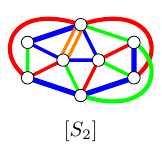}
&
\includegraphics[scale=0.85] {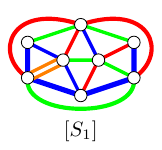}
& 
\includegraphics[scale=0.85] {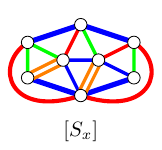} 
& { }
\\ 
 \hline
\end{tabular}\vspace{3mm}
\caption{$Co(\Sigma)$ and skeletons in $\Sigma'$  for $(0,0,6)$ and $(0,2,4)$} \label{tb:summary}
\end{table}
This table reveals that considering skeletons is unnecessary for cases [A$\ast$], [B$\ast$], and [C$\ast$] because they are 4-colorable if we provided an RGB-tiling in $\mathcal{RGBT}(\Sigma')$ with same $Co(\Phi)$. In other words, if we deal with $EP$ as the back ground MPG, [A$\ast$], [B$\ast$], and [C$\ast$] can only match with some eRGB-tilings in $e\mathcal{RGBT}(\Sigma')$ who have at least an $e$-diamond inside $\Sigma'$. However, we draw $K_r|_b^d$, together with $K_b|_b^d$, for case $[X_1]$, which is novel with a Type-A $e$-diamond inside $\Sigma$ and is therefore marked with an X. We argue that $[X_1]$ is 4-colorable because of following lemma.
   \begin{lemma}[Rewrite Lemma~\ref{RGB3-thm:Coalpha} in~\cite{Liu2023III}] \label{thm:CoalphaNew}
Let $EP\in e\mathcal{MPGN}$ and $e=\alpha\beta\in E(EP)$. This $e$-diamond has its surround four vertices $\alpha,N,\beta,S$ and $\deg(N)=5$. One of the following items is provided:  
	\begin{enumerate}
\item[(a)] There is an RGB-tiling $T$ on $EP-\{e\}$ and this $e$-diamond is either Type A or B. The five neighbors of $N$ forms $K_r|_\alpha^\beta \{\alpha\beta: yellow\}$, i.e., a 5-cycle.
\item[(b)]	There is an R-tiling $T_r$ on $EP$ and \underline{every red odd-cycle} passing through $e$. The five neighbors of $N$ forms a red 5-cycle.     
	\end{enumerate}
Then we have $\deg(\alpha,\beta)\ge 6$.  
   \end{lemma}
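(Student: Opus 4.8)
The plan is to argue by contradiction: suppose $\deg(\alpha,\beta)=5$ under the hypotheses of either (a) or (b), and derive a contradiction with $EP\in e\mathcal{MPGN}4$. The two cases are closely related — case (a) with its Type-A/B $e$-diamond and the red Kempe chain $K_r|_\alpha^\beta$ is essentially the ``tiling picture'' of case (b)'s R-tiling with a single red odd-cycle through $e$ — so I would first reduce (a) to (b) (or handle them in parallel) using Theorem~\ref{thm:4RGBtiling} and the correspondence between R-tilings with exactly one red odd-cycle and Type-A/B $e$-diamonds discussed around Figure~\ref{fig:impossible}. The point is that in both settings the five neighbors of $N$ form a $5$-cycle which, together with $N$, is a $K_5$-minus-structure whose outer boundary $\Phi_N$ is a pentagon; since $\deg(N)=5$ this neighbor-cycle is exactly the border of the single vertex $N$.

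Next I would localize the topic of discussion to $T\!D:=(\{N\};\deg(N)=5)$, or better to the pair configuration containing both $N$ and the edge $e=\alpha\beta$, and invoke the machinery of Section~\ref{sec:SynEquivCong}: set $\Sigma:=\Phi\cup T\!D$ and $\Sigma':=EP-T\!D$. The key structural fact is that $\alpha$ and $\beta$ both lie on the pentagon $\Phi$ of neighbors of $N$ (they are two of $N$'s five neighbors, adjacent to each other across the $e$-diamond), so the red connection $K_r|_\alpha^\beta$ forced by $EP\in e\mathcal{MPGN}4$ must be realized either \emph{inside} $\Sigma$ — but inside $\Sigma$ the only vertex besides $\Phi$ is $N$, and the five edges $N v_i$ are the red $5$-cycle, which connects $\alpha$ to $\beta$ \emph{through $N$ along an even-length red path on one side and odd on the other} — or \emph{outside} in $\Sigma'$. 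I would then perform an ECS (edge-color switch) along the canal ring circling $N$, rotating the $e$-diamond around the pentagon exactly as in the ``rock-n-roll'' argument of Subsection~\ref{sec:RnR} and Lemma~\ref{thm:CongruentEachOther}: each of the five possible placements of the abandoned edge $e'$ among the five ``short diagonals'' of the pentagon gives a congruent eRGB-tiling. Because the pentagon has odd length, the parity bookkeeping of Lemma~\ref{thm:evenoddRGB} around $\Phi$ shows the red/green/blue counts stay consistent through each rotation.

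The contradiction is then extracted as follows. If $\deg(\alpha,\beta)=5$, then not only $N$ but also $\alpha$ and $\beta$ have small degree, and in particular merging $\alpha$ and $\beta$ (as in Remark~\ref{re:abmerged}) produces $EP_{\alpha=\beta}$, a smaller MPG that by Theorem~\ref{thm:eMPG4} is $4$-colorable; its induced R-tiling is grand by Theorem~\ref{thm:RtilingOnePiece} (one piece), so it has no red odd-cycle. Pulling this $4$-coloring back to $EP-\{e\}$ and reading off the $e$-diamond, the rotation argument forces the resulting skeleton into one of the ``$[X_1]$-type'' or Type-C/D positions of Table~\ref{tb:summary}; by the Fundamental Theorem~\ref{thm:RGBNesSuf}(a), producing any Type-C or D RGB-tiling on $Q=EP-\{e\}$ contradicts $EP\in e\mathcal{MPGN}4$. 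The heart of the argument — and the step I expect to be the main obstacle — is showing that when $\deg(\alpha,\beta)=5$ the red Kempe chain $K_r|_\alpha^\beta$ \emph{cannot} be forced to live in $\Sigma'$: one must rule out that after every one of the five ECS rotations the red connection re-routes itself through the exterior, i.e., one must show the five rotated configurations cannot \emph{all} simultaneously be Type A/B. This is exactly the parity/counting obstruction that $\deg\ge 6$ is designed to defeat, and handling it cleanly will require carefully tracking how the canal ring around $N$ interacts with the (generalized) canal structure of $\Sigma'$, much as in Section~\ref{RGB3-sec:rotationDualKempe2}.
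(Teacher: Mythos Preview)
First, note that the present paper does \emph{not} supply a proof of Lemma~\ref{thm:CoalphaNew}; the lemma is a rewriting of a result established in~\cite{Liu2023III} and is only quoted and applied here (for instance to exclude the pattern $[X_1]$ in Table~\ref{tb:summary}). So there is no in-paper argument against which your proposal can be directly compared.

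On its own merits, your plan has a clear gap. A minor point: the contradiction hypothesis should be $\deg(\alpha)=5$ \emph{or} $\deg(\beta)=5$, not both simultaneously; by the symmetry of $\alpha$ and $\beta$ in the $e$-diamond and in the link of $N$ it suffices to treat $\deg(\alpha)=5$. The substantive problems are two. First, the merging paragraph (form $EP_{\alpha=\beta}$, pull back a $4$-coloring) uses neither $\deg(\alpha)=5$ nor the specific hypothesis that $K_r|_\alpha^\beta$ is the $5$-link of $N$; that contraction is $4$-colorable for \emph{every} edge of \emph{every} $EP\in e\mathcal{MPGN}4$, and pulling back merely returns a Type-A/B tiling on $EP-\{e\}$---exactly what hypothesis~(a) already hands you. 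This step does no work. Second, and this is the real gap, you correctly flag the heart of the matter (``one must show the five rotated configurations cannot all be Type A/B'') but then stop. Moreover, the rotations you set up are around $N$, whereas the leverage has to come from $\deg(\alpha)=5$, so that $\{N,\alpha\}$ is a genuine ``$55$'' pair; nothing in your outline explains how rotating around $N$ alone detects the degree of $\alpha$. The force of hypotheses (a)/(b) is that the red Kempe chain (respectively every red odd-cycle) is \emph{pinned to the $5$-link of $N$}, hence short and local; what must actually be shown is that, once $\deg(\alpha)=5$, a targeted $\Sigma$-adjustment or ECS destroys this single localized red odd-cycle without creating a new one, producing an R-tiling on $EP$ with no red odd-cycle and hence $4$-colorability via Theorem~\ref{thm:4RGBtiling}. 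Your outline never cashes this in. Finally, pointing to $[X_1]$ and Table~\ref{tb:summary} is circular in this context: the paper uses the present lemma to eliminate $[X_1]$, so landing in an ``$[X_1]$-type'' position is not by itself a contradiction.
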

\noindent Applying this lemma to $[X_1]$ reveals that $K_b|_b^d\cup \{bd\}$ of length 5 surrounds $a$, where $bd$ is an yellow double-line and the it should be $\deg(b,d)\ge 6$. However, $\deg(b)=5$  is a contradiction.

Table~\ref{tb:summary2} lists all possible cases for $(\#r,\#g,\#b)=(2,2,2)$ up to synonyms. 
\begin{table}[h]
\centering
\begin{tabular}{ |c|c|c|c| } 
 \hline
\includegraphics[scale=0.85]{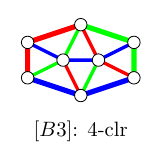} 
&
\includegraphics[scale=0.85] {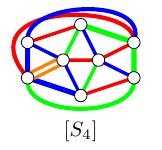}
& 
\includegraphics[scale=0.85] {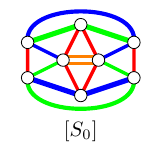} 
& 
\includegraphics[scale=0.85] {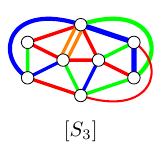}
\\
 \hline 
\includegraphics[scale=0.85] {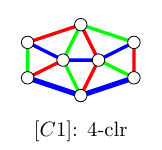}
&
\includegraphics[scale=0.85] {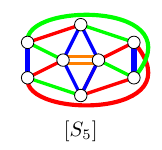}
& 
\includegraphics[scale=0.85] {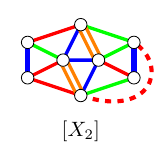} 
&
\includegraphics[scale=0.85] {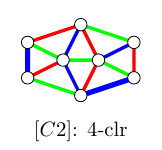}
\\ 
 \hline
\end{tabular}\vspace{3mm}
\caption{$Co(\Sigma)$ and skeletons in $\Sigma'$ for $(2,2,2)$} \label{tb:summary2}
\end{table}
The eRGB-tiling $[X_2]$ is interesting because it has two yellow double-lines. Again $[X_2]\cong [S_0]$. We conclude that 
        \begin{theorem}[Rewritten Theorem~\ref{RGB3-thm:atlas} in~\cite{Liu2023III}] \label{thm:atlasNew}
All non-4-colorable eRGB-tilings in $ATLAS$ (there are eight) are congruent to each other under $[\ast]_\text{sym}$. To study $(EP;55)$, we can first choose any one of these eight.
    \end{theorem}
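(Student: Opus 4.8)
The plan is to establish the statement in three stages, matching its two assertions — that exactly eight members of $ATLAS$ are non-4-colorable, and that those eight are mutually congruent under $[\ast]_\text{sym}$. Stage~1 settles the enumeration and the count of eight; Stage~2 collects the congruences already available from the earlier results; Stage~3 supplies the one missing link $[X_2]\cong[S_0]$ and then closes the argument.

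\textbf{Stage 1 (enumeration and elimination).} Because the border $\Phi$ is a $6$-cycle, Lemma~\ref{thm:evenoddRGB}(b) restricts the triple $(\#r,\#g,\#b)$ along $Co(\Phi)$ to $(0,0,6)$, $(0,2,4)$, or $(2,2,2)$ up to synonym. For each of these I would carry out the finite check that lists every admissible interior colouring $Co(\Sigma)$ — each edge of the eight-vertex interior painted red, green, blue, or yellow-double-line, each triangle three-coloured, no two yellow edges in one triangle, and each $e$-diamond Type A or B — together with the skeleton it forces on $\Sigma'$; this is exactly what Tables~\ref{tb:summary} and~\ref{tb:summary2} tabulate, producing $7+8=15$ candidates. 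The substantive task is to discard the seven that are $4$-colorable, using the arguments that accompany the two tables: for $[A\ast]$, $[B\ast]$, $[C\ast]$ the border colouring $Co(\Phi)$ already extends over $\Sigma'=EP-T\!D$ to a genuine RGB-tiling (colouring $e$ green, as for Type~C$^\ast$, in the $[C\ast]$ subcase), so that gluing along $\Phi$ $4$-colours $EP$, contradicting $EP\in e\mathcal{MPGN}4$ via Theorem~\ref{thm:4RGBtiling}; and for $[X_1]$ one invokes Lemma~\ref{thm:CoalphaNew}(a), whose $5$-cycle hypothesis is met because the yellow-double-line $e$-diamond of $[X_1]$ has $K_b|_b^d\cup\{bd\}$ a $5$-cycle around the degree-$5$ vertex $a$, forcing $\deg(b,d)\ge 6$ whereas $\deg(b)=5$. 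What survives is, up to synonym and the four reflections, precisely $[S_0],[S_1],\dots,[S_5],[S_x]$ and $[X_2]$ — eight classes — which establishes the count and identifies the objects still to be linked.

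\textbf{Stage 2 (the congruences already in hand).} The chain $[S_0]\cong[S_1]\cong\dots\cong[S_5]$ is Equation~\ref{eq:T55S09}, obtained by the ten consecutive ECS steps — the generalized canal-ring rotations around $a$ and $b$ — of Figure~\ref{fig:KempeCRo2deg5} starting from $S_0=T_\alpha^{55}$; and Lemma~\ref{thm:CongruentEachOther} adjoins $[S_x]$ to the same congruence class. So after this stage every one of the eight survivors except $[X_2]$ already lies in one congruence class.

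\textbf{Stage 3 (the new link, the main obstacle, and the conclusion).} It remains to prove $[X_2]\cong[S_0]$. I would do this by exhibiting a concrete generalized canal ring inside the eRGB-tiling $X_2$ whose ECS carries $X_2$ onto one of $S_0,\dots,S_5$, hence — by Stage~2 — into $[S_0]$. This is where the one genuine difficulty sits: $X_2$ carries \emph{two} yellow double-lines, so one must work with eRGB-tilings having two abandoned edges — choose the ring that crosses, and hence recolours, one of the two yellow segments, apply the generalized ECS rule (swap green and blue on the normal red stretches, swap red and yellow on the generalized stretches, with the analogous convention for a green or blue ring), and then verify that the output is once again a legitimate member of $e\mathcal{RGBT}(\Sigma)$: no two yellow edges in a triangle, the underlying R-tiling still grand, and, the delicate requirement, every $e$-diamond met along the ring still Type A or B rather than Type C or D. Once $[X_2]$ is attached, congruence is an equivalence relation — each VCS and ECS step is reversible — so the eight classes fuse into a single one; consequently, in studying $(EP;55)$ one may replace the interior tiling by whichever of the eight is most convenient, which is the final assertion of the theorem.
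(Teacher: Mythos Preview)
Your three-stage plan is correct and mirrors the paper's own argument: the paper likewise enumerates the fifteen $Co(\Sigma)$ via Tables~\ref{tb:summary} and~\ref{tb:summary2}, discards the seven $4$-colorable ones ($[A\ast],[B\ast],[C\ast]$ by the gluing argument you describe, $[X_1]$ by Lemma~\ref{thm:CoalphaNew}), invokes Lemma~\ref{thm:CongruentEachOther} for $[S_0],\dots,[S_5],[S_x]$, and then asserts $[X_2]\cong[S_0]$. The only difference is that the paper does not carry out your Stage~3 explicitly here but simply states ``Again $[X_2]\cong[S_0]$'' and defers the detailed ECS verification to Subsection~\ref{RGB3-sec:ATLAS} and Remarks~\ref{RGB3-re:X2}, \ref{RGB3-re:X2Sx}, \ref{RGB3-re:S3} of~\cite{Liu2023III}; your plan to exhibit the generalized canal ring directly is exactly what those references do.
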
 

The situation involving two yellow double-lines in $\Sigma$ is revisited in several instances in this paper. The pattern $[X_2]$ is discussed in detail in Subsection~\ref{RGB3-sec:ATLAS} and Remarks~\ref{RGB3-re:X2}, \ref {RGB3-re:X2Sx}, and~\ref{RGB3-re:S3} in~\cite{Liu2023III}.   

        \begin{remark}
The theory of ECS on generalized canal rings is the same for all cases with one or more yellow double-lines. The part of the RGB-tiling in $\Sigma'$ (an $n$-semi-MPG) induces a 4-coloring function. Because the segment drawn as a dashed red, green, or blue canal ring in $\Sigma'$ is normal, ECS transforms it into a new RGB-tiling that induces a new 4-coloring function. This new tiling inside $\Sigma$ (inside $\Omega$ or $\Phi$), including new positions for the one or more yellow double-lines, reveal that $EP$ is non-4-colorable. That is, replacing yellow double-lines with red, green, or blue lines would immediately create at least one corresponding odd-cycle. If this odd-cycle is simply a triangle and a facet, it is trivial; we thus focus on the nontrivial Kempe chains.      
        \end{remark}

	\begin{remark}  \label{re:BigPocture}
Let us explain the big picture of the methodology:
	\begin{eqnarray}
&& \text{We assume $\mathcal{N}4$ nonempty. $ \quad \Leftrightarrow \quad  e\mathcal{MPGN}4 \neq \emptyset$} \nonumber\\
&\rightarrow & \text{We focus on an $EP\in  e\mathcal{MPGN}4$ comprising a particular $T\!D$} \nonumber \\
&& \text{(e.g., $55$, $5^3$, $5^4$) as well as the corresponding $\Omega$, $\Sigma$, $\Sigma'$.} \nonumber  \\ 
&\rightarrow & \text{For every fixed $e$ inside $\Sigma$, we draw all possible initial RGB-tilings} \nonumber \\
&& \text{ on $EP-\{e\}$ (e.g., $[T_\alpha^{55}]$ and $[T_\beta^{55}]$). At least one of them exists.} \label{eq:AtLeastOne}\\
&\rightarrow & \text{We should work hard to investigate this $T\!D$.} \nonumber
	\end{eqnarray}	
As a good example associated with~(\ref{eq:AtLeastOne}), we use Figure~\ref{fig:KempeCRo2deg5} to demonstrate that $[T_\alpha^{55}]\cong [T_\beta^{55}]$.  The author is working on a general method to verify a conjecture:
	\begin{conjecture}  \label{conj:IniCong}
For every fixed $e\in E(EP)$, all possible initial RGB-tilings on $EP-\{e\}$ are congruent to each other. 
	\end{conjecture}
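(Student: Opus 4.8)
The plan is to recast congruence on $\mathcal{RGBT}(EP-\{e\})$ as a Kempe-connectivity statement and then prove it by induction on the number of vertices lying outside a suitably enlarged topic of discussion. Fix $e=ab$ and put $Q:=EP-\{e\}$, a $4$-colorable $4$-semi-MPG by Theorem~\ref{thm:eMPG4}. First note that no $4$-coloring of $Q$ can assign $a$ and $b$ distinct colors, since such a coloring would extend across $e$ and $4$-color $EP$, contradicting $EP\in e\mathcal{MPGN}4$; equivalently, by Theorem~\ref{thm:RGBNesSuf}(b) every RGB-tiling on $Q$ realizes the $e$-diamond as Type A or B, and in a Type-A or Type-B diamond the chain $K_r|_a^b$ pins $a$ and $b$ to the same color class. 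Hence, up to synonym, the RGB-tilings on $Q$ are exactly the $4$-colorings of the $(\omega-1)$-vertex graph $EP_{a=b}$ of Remark~\ref{re:abmerged}. Moreover an ECS along a \emph{normal} canal ring of $Q$ has the same effect on the induced $4$-coloring as a VCS (a Kempe interchange on one $\{i,j\}$-component, as one checks directly from the dictionary red$\leftrightarrow\{1,3\}/\{2,4\}$, green$\leftrightarrow\{1,2\}/\{3,4\}$, blue$\leftrightarrow\{1,4\}/\{2,3\}$). So Conjecture~\ref{conj:IniCong} for this $e$ would follow once we show that all $4$-colorings of $EP_{a=b}$ lie in a single class under VCS together with ECS along generalized canal rings (the latter being genuinely stronger only when a yellow double-line is present in an intermediate eRGB-tiling).

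I would prove this by establishing the stronger local statement, for a fixed $4$-colorable MPG or semi-MPG $N$, that all RGB-tilings on $N$ are congruent, by induction on $|N|$, applied to $N=Q$. For the inductive step, localize: take $T\!D:=\{a,b\}$ with border $\Phi$ the link cycle of length $\deg(a)+\deg(b)-4$, interior $\Sigma:=\Phi\cup T\!D$ (the double fan at $a,b$), and exterior $\Sigma':=EP-T\!D$, which is a $(\deg(a)+\deg(b)-4)$-semi-MPG on strictly fewer vertices. Stage one handles two tilings whose restrictions to $\Phi$ differ: one runs the ``rock-n-roll'' of Figure~\ref{fig:KempeCRo2deg5} in full generality, rotating the $e$-diamond and its Kempe chains around $a$ or $b$ by ECS along generalized canal rings through the fan; exactly as in Equation~\ref{eq:T55S09}, a full turn returns to the starting skeleton, which simultaneously shows that every admissible $Co(\Phi)$ --- finitely many by Lemma~\ref{thm:evenoddRGB} --- is visited and forces all of them into one congruence class, while the novel configurations with a yellow double-line inside $\Sigma$ (the cases marked $X$ in Tables~\ref{tb:summary} and~\ref{tb:summary2}) are eliminated by Lemma~\ref{thm:CoalphaNew}, precisely as in Theorem~\ref{thm:atlasNew}. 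Stage two handles two RGB-tilings $T,T'$ with the \emph{same} $Co(\Phi)$: their restrictions to $\Sigma'$ are RGB-tilings of a strictly smaller semi-MPG, hence congruent there by the inductive hypothesis; every VCS or normal-canal-ring ECS realizing this is normal inside $\Sigma'$, so by the remark preceding Remark~\ref{re:BigPocture} it extends verbatim to a congruence on all of $EP$, the yellow double-line of the $e$-diamond possibly drifting but remaining a valid Type-A/B diamond at each step. Concatenating the two stages gives $[T]\cong[T']$ for all RGB-tilings on $Q$; the low-degree base of the induction is the $ATLAS$ computation behind Lemma~\ref{thm:CongruentEachOther} and Theorem~\ref{thm:atlasNew}.

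The hard part is the inductive hypothesis invoked in stage two: that the congruence graph of a fixed $4$-colorable semi-MPG (vertices its RGB-tilings, edges a single VCS or a single ECS) is connected. This is a Kempe-type connectivity theorem for $4$-colorings of planar near-triangulations, and it must be obtained from within the framework --- using only that proper subgraphs of $EP$ are $4$-colorable (Theorem~\ref{thm:eMPG4}), never the Four Color Theorem for $EP$ itself. The delicate point, well known to be the reason VCS alone does not suffice, is that a Kempe-color-pair component needed in $\Sigma'$ may merge with or split around the contracted fan when it is reattached, so the chain fails to extend across $\Phi$; the expected remedy --- and the ``general method'' the author describes as in progress --- is to abandon a carefully chosen edge (a yellow double-line), perform an ECS along a generalized canal ring through it to unstick the obstruction, and then reinstate the edge, which is exactly the extra strength that ECS on generalized canal rings contributes over plain VCS. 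A secondary, more routine obstacle is degenerate geometry: if $a$ and $b$ share a common neighbor beyond the two diamond vertices (a separating triangle through $e$), then $\Phi$ need not be a simple cycle and $\Sigma'$ may disconnect, and such configurations --- recurring already in \cite{Liu2023I} and \cite{Liu2023III} --- require separate bookkeeping, as does the choice of which edges to enlarge $T\!D$ by when $\deg(a)+\deg(b)$ is large.
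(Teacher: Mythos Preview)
This statement is a \emph{conjecture} in the paper, not a theorem: the author writes ``The author is working on a general method to verify a conjecture'' and offers no proof. So there is nothing in the paper to compare your argument against; the question is whether your proposal actually closes the gap.

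It does not. You yourself identify the gap in your third paragraph: the entire inductive scheme rests on the ``stronger local statement'' that for any $4$-colorable MPG or semi-MPG $N$, all RGB-tilings on $N$ are congruent. That is precisely a Kempe-connectivity theorem for $4$-colorings of planar near-triangulations, and you do not prove it --- you assume it as the inductive hypothesis and then note that it is ``the hard part.'' But the base case you cite (the $ATLAS$ computation behind Lemma~\ref{thm:CongruentEachOther} and Theorem~\ref{thm:atlasNew}) is carried out in the paper only for the specific configuration $\deg(a,b)=5$, and even there it uses the hypothesis $EP\in e\mathcal{MPGN}4$ essentially (e.g.\ to rule out the $X$-cases via Lemma~\ref{thm:CoalphaNew}). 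Your induction, by contrast, is over \emph{arbitrary} $4$-colorable semi-MPGs $N$, which need not sit inside any extremal counterexample, so those tools are unavailable; you have strengthened the statement past what the machinery supports. Stage one has the same problem: the ``rock-n-roll of Figure~\ref{fig:KempeCRo2deg5} in full generality'' is asserted, not proved --- the paper's rotation argument is a finite case check for a hexagonal $\Phi$, and there is no indication that it survives when $\deg(a)+\deg(b)$ is large or when the $X$-type obstructions multiply.

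There is also a circularity risk you should flag explicitly. Kempe-equivalence of all $4$-colorings of a planar triangulation is known, but the known proofs go through the Four Color Theorem. Since the ambient program is an unplugged proof of the Four Color Theorem, invoking such a result (or an equivalent) for the smaller $\Sigma'$ is not permitted; any argument here must use only Theorem~\ref{thm:eMPG4} (proper subgraphs of $EP$ are $4$-colorable), which gives existence of colorings, not connectivity of the coloring graph. Your proposal is a reasonable \emph{outline} of what a proof might look like, but as written it reduces the conjecture to a statement at least as hard as the conjecture itself.
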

	 	\end{remark}

There are two key results involving 3- and 4-cycles in any $EP$ are worth reviewing as follows: 

   \begin{theorem}[Lemma~\ref{RGB1-thm:nontrivial3} in~\cite{Liu2023I} and Theorem~\ref{RGB2-thm:trivial4cycle} in~\cite{Liu2023II}]  \label{thm:nontrivial34}
Let $EP\in e\mathcal{MPGN}$.
        \begin{enumerate}
\item Every 3-cycle in $EP$ forms a 3-facet.
\item If $\Omega:=a$-$b$-$c$-$d$-$a$ is a 4-cycle in $EP$, then either $ac$ or $bd$ is an edge of $EP$, and $a,b,c,d$ induce a single diamond in $EP$.
        \end{enumerate}
   \end{theorem}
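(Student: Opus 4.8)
The plan is a minimal–counterexample argument carried out through Theorem~\ref{thm:eMPG4}: in each part I show that the forbidden configuration would yield a proper $4$‑coloring of $EP$, contradicting $EP\in e\mathcal{MPGN}4$. Write $\omega=|EP|$. I will use repeatedly that a $3$‑ or $4$‑cycle of a plane graph bounds two closed regions, that such a region is a face of $EP$ exactly when its interior contains neither a vertex nor a chord of the cycle, and that every face of an MPG is a triangle.

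For item~(1): let $T=xyz$ be a $3$‑cycle that is not a facet. A triangle has no chords, so each side of $T$ must then contain a vertex of $EP$; let $G_{\mathrm{in}}$ and $G_{\mathrm{out}}$ be the subgraphs of $EP$ induced by $\{x,y,z\}$ together with the vertices on the respective side. Each is a triangulation of the sphere with outer face $xyz$, hence an MPG, and each has fewer than $\omega$ vertices, so each is $4$‑colorable by Theorem~\ref{thm:eMPG4}. Any proper $4$‑coloring restricts to $\{x,y,z\}$ as three distinct colors, so after a permutation of the four colors the colorings of $G_{\mathrm{in}}$ and $G_{\mathrm{out}}$ agree on $\{x,y,z\}$; since $T$ separates $EP$, every edge lies in $G_{\mathrm{in}}$ or in $G_{\mathrm{out}}$, and the common extension is a proper $4$‑coloring of $EP$ --- a contradiction. (One may equivalently argue with R‑/RGB‑tilings via Theorem~\ref{thm:4RGBtiling}, matching the three edge‑colors along $T$ up to synonym.)

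For item~(2): I first prove the $4$‑cycle $C=a$‑$b$‑$c$‑$d$‑$a$ has a chord, so suppose neither $ac$ nor $bd$ is an edge. Then $C$ is chordless, hence each side contains a vertex; let $U$ and $W$ be these vertex sets and put $G_U=EP[\{a,b,c,d\}\cup U]$, $G_W=EP[\{a,b,c,d\}\cup W]$, two near‑triangulations of the disk bounded by $C$. Adding the new chord $ac$ inside the $4$‑gon face turns $G_U$ into an MPG on fewer than $\omega$ vertices, hence $4$‑colorable; the same holds with $bd$ in place of $ac$, and for $G_W$. Thus $G_U$ admits a $4$‑coloring with $a\ne c$ and one with $b\ne d$, and likewise $G_W$. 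Classifying $4$‑colorings of $C$ by which of $a=c$, $b=d$ hold, each of $G_U,G_W$ realizes on $C$ either the pattern with $a,b,c,d$ all distinct, or --- at the least --- both the pattern $(a=c,\ b\ne d)$ and the pattern $(a\ne c,\ b=d)$. Whenever $G_U$ and $G_W$ share one of these three patterns, permute colors to match and glue along $C$ (again every edge lies on one side), contradicting non‑$4$‑colorability. The one case this misses is, up to swapping $U$ and $W$, that $G_U$ realizes the all‑distinct pattern while $G_W$ realizes the two mixed patterns but not the all‑distinct one. Here I Kempe‑exchange: from an all‑distinct coloring of $G_U$ with $a,b,c,d\mapsto 1,2,3,4$, swap the $\{2,4\}$‑Kempe component of $b$ to try to force $b=d$, or the $\{1,3\}$‑component of $a$ to try to force $a=c$. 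One of these works --- yielding a mixed pattern on $C$ that $G_W$ also realizes, which finishes the gluing --- unless $G_U$ simultaneously contains a $\{1,3\}$‑colored $a$--$c$ path and a $\{2,4\}$‑colored $b$--$d$ path. But $a,b,c,d$ lie in this cyclic order on the boundary of the disk $G_U$, so the $a$--$c$ path separates $b$ from $d$ within the disk and the $b$--$d$ path must cross it; since the two paths use disjoint color sets they are vertex‑disjoint, and vertex‑disjoint paths cannot cross in a plane graph --- contradiction. Hence $C$ has a chord. (In the tiling language of the paper this Kempe step is an edge‑color‑switching along a canal line.)

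To finish item~(2), say $ac\in E(EP)$. By item~(1), $abc$ and $acd$ are facets, so the side of $C$ carrying $ac$ has no interior vertices. If $bd$ were also an edge it would have to be drawn on the opposite side of $C$, and item~(1) would force $abd$ and $bcd$ to be facets too, leaving $EP=K_4$, which is $4$‑colorable --- impossible. So exactly one of $ac,bd$ is an edge, and $\{a,b,c,d\}$ induces exactly the diamond formed by $C$ together with that chord (its two triangles both facets). The main obstacle is the final case of the chord argument: the clash between a side that forces only all‑distinct colorings of $C$ and a side that forces only the two mixed patterns is genuine --- the wheel on $C$ with a single interior vertex realizes only the two mixed patterns --- so the Kempe recoloring combined with the planar ``disjoint paths cannot cross'' obstruction cannot be avoided; beyond that, the routine care is in the embedding bookkeeping: which region is inside, that no edge crosses $C$, and that the chord‑augmented auxiliary graphs stay simple MPGs with fewer than $\omega$ vertices.
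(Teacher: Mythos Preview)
Your proof is correct. Note, however, that the present paper does not itself prove Theorem~\ref{thm:nontrivial34}: it is stated with a citation to the companion papers~\cite{Liu2023I} and~\cite{Liu2023II}, and no argument is given here, so there is no in-paper proof to compare against directly.

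On its own merits: part~(1) is the standard minimal-counterexample split along a separating triangle and is exactly right. For part~(2), your construction of $G_U+ac$, $G_U+bd$ (and likewise for $G_W$) as genuine MPGs on fewer than $\omega$ vertices is sound, as is the three-pattern classification of proper colorings restricted to $C$. The Kempe step in the residual case is also correct: the decisive point is that a $\{1,3\}$-colored $a$--$c$ path and a $\{2,4\}$-colored $b$--$d$ path inside the disk $G_U$ would have to share a vertex (since $a,b,c,d$ occur in that cyclic order on the boundary) yet are vertex-disjoint by color --- this is exactly the situation in which Kempe's exchange is valid, in contrast to his flawed degree-$5$ case. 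The finish, using part~(1) to force facets and excluding $K_4$ to rule out both chords, is clean.

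Your argument is phrased in the classical vertex-coloring language. The companion papers work instead in the RGB-tiling framework of Theorem~\ref{thm:4RGBtiling}, where the Kempe exchange becomes an edge-color-switching along a canal line; as you note parenthetically, the two formulations are equivalent, so the difference is one of vocabulary rather than substance.
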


\section{Brief revision of Theorem~\ref{RGB3-thm:4deg5inDiamond} and Lemma~\ref{RGB3-thm:5inTri565656} in~\cite{Liu2023III}} \label{sec:AReview}
This revision is an opportunity to practice the proposed methodology. Assume that three vertices of degree 5 in $EP$ form a triangle. Thus, $T\!D:=(\{a,b,c: \text{in a triangle}\};\deg(a,b,c)=5)$ (or ``$5^3$'' for short) has three vertices of degree 5, as shown in Figure~\ref{fig:3deg5a} and $\Omega:=d$-$v_1$-$v_2$-$\ldots$-$v_5$-$d$.

According to Theorem~\ref{thm:atlasNew}, among the eight possible eRGB-tilings on $55$ which are congruent to each other, only one of these tilings must be analyzed; we can thus select the first graph $[T_\alpha^{55}]$ in Figure~\ref{fig:KempeCRo2deg5basic} and in Figure~\ref{fig:KempeCRo2deg5}. For this $[T_\alpha^{55}]$ with $\deg(c)=5$, the two graphs $[T_1^{5^3}]$ and $[T_2^{5^3}]$ in Figure~\ref{fig:3deg5a} are determined by whether $cv_3$ is blue or green. Given that $cv_3$ is blue, a 5-cycle $K_g\cup \{ab\}$ appears. According to Lemma~\ref{thm:CoalphaNew}, $[T_1^{5^3}]$ is impossible. 
   \begin{figure}[h]
   \begin{center}
   \includegraphics[scale=0.87]{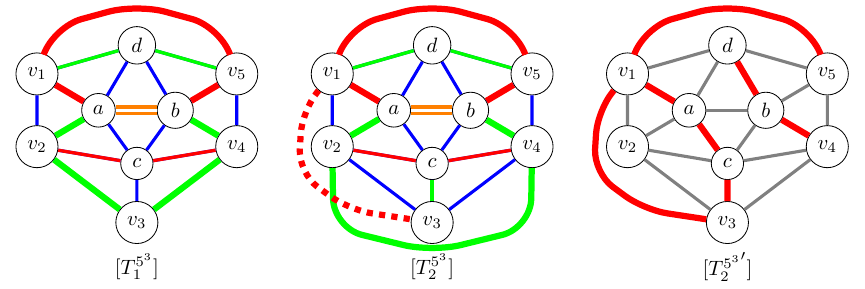}
   \end{center}
   \caption{Only $[T_2^{5^3}]$ is is a possible tiling for $EP$; $[T_1^{5^3}]$ is not} \label{fig:3deg5a} 
   \end{figure} 
Thus, $[T_2^{5^3}]$ is the only possible eRGB-tiling for this $(EP;T\!D)$. In addition, the red dashed path can be shown to exist by simply arranging a new red tiling inside $\Sigma$ and treating green and blue as black. This new red tiling on $EP$ is the third graph in this figure $[{T_2^{5^3}}']$ and must have at least one red odd-cycle that crosses $\Sigma$. Hence, the only possible red path is $K_r|_{v_1}^{v_3}$. Because both $v_1v_2$ and $v_2v_3$ are blue, $K_r|_{v_1}^{v_3}$ is guaranteed to have even length. 

This method for determining the existence of a new $K_r|_{v_1}^{v_3}$ described in the last paragraph is called \emph{$\Sigma$-adjustment}. In short, the method rearranges any valid tiling inside $\Sigma$ with the coloring $Co(\Omega)$ fixed. This reveals more details about the skeleton or even results in a contradiction.    

   \begin{lemma}[Lemma~\ref{RGB3-the:3deg5Representative} in~\cite{Liu2023III}] \label{the:3deg5Representative}
Let $a, b,$ and $c$ be three vertices in a triangle of $EP$ with $\deg(a,b,c)=5$. Only one congruent class of RGB-tilings on $EP-\{ab\}$ exists and has a representative that is shown as $[T_2^{5^3}]$ in Figure~\ref{fig:3deg5a} (but the red dashed line is shown as a solid line).  
   \end{lemma}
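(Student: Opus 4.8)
The plan is to assemble Lemma~\ref{the:3deg5Representative} from the pieces that the excerpt has already put in place, treating it as a bookkeeping exercise about which eRGB-tilings can survive on $EP-\{ab\}$ when $T\!D=5^3$. First I would fix the edge $e:=ab$ and the border $\Phi$ of the sub-$T\!D$ $\{a,b\}$ inside $EP$; since $\deg(a,b,c)=5$ and $a,b,c$ form a triangle in $EP$, the third vertex $c$ lies on $\Phi$, and $\Phi$ is a 6-cycle $v_1$-$v_2$-$c$-$v_4$-$v_5$-$d$-$v_1$ exactly as in the $55$ analysis of Section~\ref{sec:rotationDualKempe2}. By Theorem~\ref{thm:atlasNew}, all non-4-colorable eRGB-tilings in $ATLAS$ for $(EP;55)$ are congruent to one another under $[\ast]_\text{sym}$, so I am entitled to start the analysis from the single representative $[T_\alpha^{55}]$. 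This reduces everything to studying how $[T_\alpha^{55}]$ can be extended across the vertex $c$, i.e.\ how the remaining edges incident to $c$ — in particular $cv_3$ — may be colored.

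The core step is the case split on the color of $cv_3$, carried out exactly as in the paragraph preceding the lemma. Around $c$ the edges $cv_2$, $ca$, $cb$, $cv_4$ already have colors forced by $[T_\alpha^{55}]$, and since every triangle at $c$ must carry three colors, the one free edge $cv_3$ is either blue or green; these give the two candidate tilings $[T_1^{5^3}]$ and $[T_2^{5^3}]$ of Figure~\ref{fig:3deg5a}. For $[T_1^{5^3}]$ (the case $cv_3$ blue) the green edges around $a$ together with $ab$ close up into a 5-cycle $K_g\cup\{ab\}$ encircling $b$, so Lemma~\ref{thm:CoalphaNew}(a) applies (with $(\alpha,\beta)=(a,b)$ and $N$ the degree-5 vertex whose five neighbors form that cycle) and forces $\deg(a,b)\ge 6$, contradicting $\deg(a)=\deg(b)=5$. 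Hence $[T_1^{5^3}]$ is impossible and $[T_2^{5^3}]$ is the unique surviving candidate; I should double-check that the hypotheses of Lemma~\ref{thm:CoalphaNew} are literally met here (that the five neighbors genuinely form the cycle $K_r|_\alpha^\beta\cup\{\alpha\beta:\text{yellow}\}$ after the appropriate synonym relabeling), since that verification is where an error could hide.

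It then remains to upgrade ``$[T_2^{5^3}]$ is the only candidate'' to the statement of the lemma, namely that there is \emph{exactly one} congruence class and that its representative is $[T_2^{5^3}]$ with the red dashed line promoted to a solid red Kempe chain. Uniqueness of the class is immediate: any RGB-tiling on $EP-\{ab\}$ must (after choosing synonyms) restrict on the $55$-border to one of the eight non-4-colorable $ATLAS$ patterns, all congruent by Theorem~\ref{thm:atlasNew}, and then extends across $c$ only as $[T_2^{5^3}]$ by the previous paragraph; so all such tilings lie in a single congruence class. For the solid-red assertion I would invoke the $\Sigma$-adjustment argument already sketched: rearrange the R-tiling inside $\Sigma$ with $Co(\Omega)$ held fixed, treating green and blue as black; since $EP$ is non-4-colorable the resulting R-tiling on $EP$ must contain a red odd-cycle, that cycle must cross $\Sigma$ (an interior tiling of $\Sigma$ alone is grand by Theorem~\ref{thm:RtilingOnePiece}), and the only place it can exit and re-enter through $\Omega$ consistent with the fixed boundary colors is along $v_1$ and $v_3$, giving a genuine chain $K_r|_{v_1}^{v_3}$; its length is even because $v_1v_2$ and $v_2v_3$ are both blue.

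\textbf{Main obstacle.} I expect the delicate point to be the last one — pinning down that the forced red odd-cycle produced by $\Sigma$-adjustment can only be $K_r|_{v_1}^{v_3}$ and nothing else, which requires knowing precisely which vertices of $\Omega$ are colored $13$ versus $24$ under $Co(\Omega)$ and ruling out every other pair of exit points; this is the step that genuinely uses the global structure of $EP$ (via Theorems~\ref{thm:nontrivial34} and~\ref{thm:RtilingOnePiece}) rather than local bookkeeping at $c$. The case elimination of $[T_1^{5^3}]$ via Lemma~\ref{thm:CoalphaNew} is short but is the other place where a mis-stated hypothesis would sink the proof, so I would write that verification out in full.
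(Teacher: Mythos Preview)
Your proposal is correct and follows essentially the same route as the paper: reduce to $[T_\alpha^{55}]$ via Theorem~\ref{thm:atlasNew}, split on the color of $cv_3$, eliminate $[T_1^{5^3}]$ with Lemma~\ref{thm:CoalphaNew}, and then obtain $K_r|_{v_1}^{v_3}$ by $\Sigma$-adjustment. One small slip: the green $5$-cycle $K_g\cup\{ab\}$ in $[T_1^{5^3}]$ encircles $c$ (its five neighbors $a,v_2,v_3,v_4,b$ form the cycle), not $b$; in the notation of Lemma~\ref{thm:CoalphaNew} you take $N=c$, which is why $\deg(a,b)\ge 6$ yields the contradiction.
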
 

Another method of proving the existence of the red dashed line $K_r|_{v_1}^{v_3}$ in Figure~\ref{fig:3deg5a} is as follows: In Figure~\ref{fig:3deg5a2}, starting with the original $[T_2^{5^3}]$,  we perform ECS twice, first on $rGCL$ and then on $bCL$. The result $[T_2^{5^3}+2_{rb}]$ is given as the third graph. Because the red generalized canal ring crosses $\Sigma'$ and the second blue canal ring is all inside $\Sigma$, the new red Kempe chain $K_r|_{v_1}^{v_3}$ should exist in $[T_2^{5^3}]$ before this modification.    
   \begin{figure}[h]
   \begin{center}
   \includegraphics[scale=0.87]{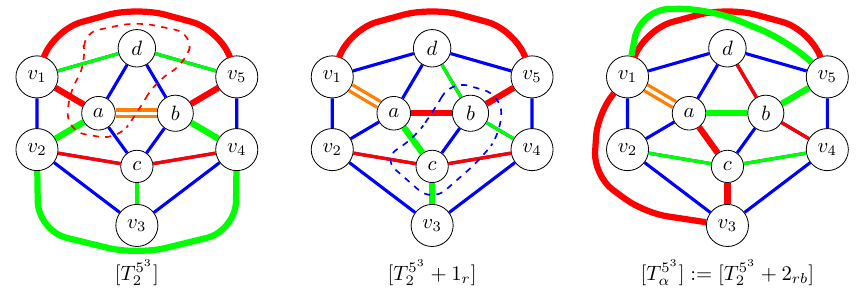}
   \end{center}
\caption{Another method of realizing $K_r|_{v_1}^{v^3}$} \label{fig:3deg5a2}  
   \end{figure} 

This third graph is crucial and is thus denoted  as $[T_\alpha^{5^3}]$.  In this eRGB-tiling, all edges along $\Phi$ are blue and have three degree 5 vertices inside. Although the structure and edge-coloring seem to be symmetric, the graph $[T_\alpha^{5^3}]$ in Figure~\ref{fig:3deg5a2} is not actually symmetric because a green Kempe chain $K_g|_{v_3}^{v_5}$ is missing. Symmetry is not the reason that $K_g|_{v_3}^{v_5}$ exists; the reason can be found in Figure~\ref{fig:3deg5b}. Moreover, drawing $K_r|_{v_1}^{v_3}$ together with $K_r|_{v_1}^{v_5}$ is the same as $K_r|_{v_1}^{v_3}$ together with $K_r|_{v_3}^{v_5}$ or $K_r|_{v_1}^{v_5}$ together with $K_r|_{v_3}^{v_5}$. {\bf This is because the requirement is that $v_1$, $v_3,$ and $v_5$ are both red-connected and also green-connected.}

As an exercise, we develop three congruent RGB-tilings on $EP-\{e\}$ for the possibility of $e\in \{av_1,bv_5,cv_3\}$ in Figure~\ref{fig:3deg5b}. All edges inside $\Sigma$ can be painted in six ways such that only one $e$-diamond has an yellow double-line.    
   \begin{figure}[h]
   \begin{center}
   \includegraphics[scale=0.87]{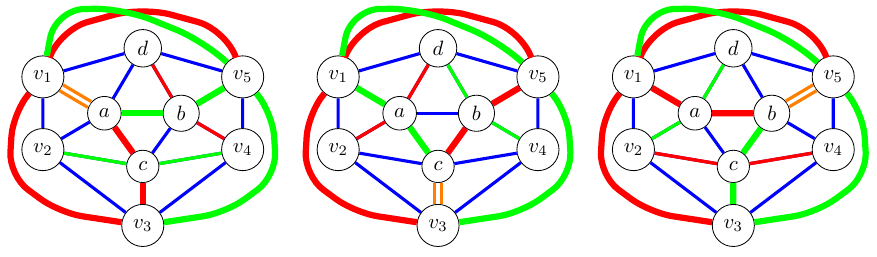}
   \end{center}
   \caption{Three congruent RGB-tilings. All of them are $[T_\alpha^{5^3}]$}     \label{fig:3deg5b} 
   \end{figure}
        \begin{lemma}[Lemma~\ref{RGB3-thm:deg5inT6blue} in~\cite{Liu2023III}] \label{thm:deg5inT6blue}
Let $EP\in e\mathcal{MPGN}4$ with $T\!D:=(\{a,b,c: \text{in a triangle}\};\deg(a,b,c)=5)$. The three eRGB-tilings $[T_\alpha^{5^3}]$ shown in Figure~\ref{fig:3deg5b} are equivalent to each other. Six such equivalent tilings exist inside $\Sigma$; $T\!D$ can be discussed by using these six tilings flexibly.  
        \end{lemma}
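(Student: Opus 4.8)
\textbf{Proof plan for Lemma~\ref{thm:deg5inT6blue}.}
The plan is to establish the claimed equivalence by exhibiting an explicit correspondence between the three eRGB-tilings in Figure~\ref{fig:3deg5b} and then counting the freedom available for coloring the interior of $\Sigma$. First I would recall that, by Lemma~\ref{the:3deg5Representative}, there is only one congruent class of RGB-tilings on $EP-\{ab\}$, with representative $[T_2^{5^3}]$, and that after the two ECS operations performed in Figure~\ref{fig:3deg5a2} we arrive at the distinguished eRGB-tiling $[T_\alpha^{5^3}]$ in which every edge along $\Phi$ is blue. The key structural point, already emphasized in the text preceding the lemma, is that the skeleton of $[T_\alpha^{5^3}]$ requires $v_1, v_3, v_5$ to be simultaneously red-connected and green-connected in $\Sigma'$; hence the skeleton is symmetric under the rotational symmetry of $T\!D$ that cyclically permutes $(a,b,c)$ and $(v_1,v_3,v_5)$, even though any single drawing of the red (or green) Kempe chains breaks that symmetry by selecting two of the three connections.

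Next I would make the interior bookkeeping precise. With $Co(\Phi)$ fixed to be all-blue and with $\deg(a)=\deg(b)=\deg(c)=5$, the interior $\Sigma$ consists of the triangle $abc$ together with the six triangles joining it to $\Phi$; the abandoned edge $e$ must be one of $\{av_1, bv_5, cv_3\}$ up to the symmetry of $T\!D$ (these being the edges from a degree-5 vertex ``outward'' in the pattern), giving three essentially different placements of the yellow double-line, which are exactly the three graphs of Figure~\ref{fig:3deg5b}. I would then check directly that each of these three eRGB-tilings is a legal member of $e\mathcal{RGBT}(\Sigma)$: no two Y's in a triangle (there is only one Y), the Y-deleted part is a genuine RGB-tiling of the all-blue-bordered semi-MPG, and the $e$-diamond attached to the Y is Type~A (its pair of Kempe chains $K_r, K_g$ being precisely the red and green connections among $v_1, v_3, v_5$). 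Since all three share the same $Co(\Phi)$ and the same skeleton $(K_r, K_g)$ on the triple $\{v_1, v_3, v_5\}$ in $\Sigma'$, the definition of equivalence $\equiv_\Omega$ from Subsection~\ref{sec:SynEquivCong} is met, so the three tilings are equivalent; applying the rotational symmetry of $T\!D$ again shows they are the ``same'' $[T_\alpha^{5^3}]$ relabelled. Finally, I would count: each of the three placements of $e$ admits exactly two ways to complete the interior RGB-coloring consistently with the all-blue border (the binary choice analogous to ``$cv_3$ blue or green'' in Figure~\ref{fig:3deg5a}, of which one is ruled out by Lemma~\ref{thm:CoalphaNew} only when a degree-5 vertex is forced into a 5-cycle — here I would verify that in the all-blue-border situation both completions survive for each of the three $e$), yielding $3\times 2 = 6$ tilings of the interior of $\Sigma$, all realizing $[T_\alpha^{5^3}]$.

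The step I expect to be the main obstacle is the last one: carefully enumerating the interior completions and confirming that there are exactly six, i.e.\ that no further completion is possible and that none of the six is excluded by the degree-5 constraint via Lemma~\ref{thm:CoalphaNew}. This requires tracking, for each placement of the yellow double-line, which surrounding edge of $e$ is forced and which is free, and checking that the free choice never creates a monochromatic 5-cycle around $a$, $b$, or $c$ that would contradict $\deg=5$. Once this case analysis is done, the equivalence assertion itself is immediate from the shared $Co(\Phi)$ and shared skeleton, and the phrase ``$T\!D$ can be discussed by using these six tilings flexibly'' is just the statement that, by Theorem~\ref{thm:atlasNew}-style reasoning, one is free to pick whichever of the six is most convenient for a given argument.
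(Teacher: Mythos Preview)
Your proposal is correct and follows the same route the paper sketches: the paper does not give a formal proof here (the lemma is cited from \cite{Liu2023III} and presented ``as an exercise''), but the surrounding text makes exactly your two points---that the three tilings of Figure~\ref{fig:3deg5b} differ only inside $\Sigma$ while sharing the all-blue $Co(\Phi)$ and the same skeleton (the red- and green-connectivity of $v_1,v_3,v_5$ in $\Sigma'$), hence are equivalent by the definition in Subsection~\ref{sec:SynEquivCong}; and that ``all edges inside $\Sigma$ can be painted in six ways such that only one $e$-diamond has a yellow double-line,'' which is your $3\times 2$ count.

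One small simplification: you do not need to worry about Lemma~\ref{thm:CoalphaNew} excluding any of the six. Once the border $\Phi$ is all blue, the two interior completions for a fixed $e\in\{av_1,bv_5,cv_3\}$ are related by the global $R\leftrightarrow G$ swap inside $\Sigma$, which preserves both $Co(\Phi)$ and the (R-and-G symmetric) skeleton; so the three placements times this binary swap already give six, and by the rotational symmetry of $T\!D$ none is singled out for exclusion. The obstruction that killed $[T_1^{5^3}]$ in Figure~\ref{fig:3deg5a} arose precisely because the border there was \emph{not} all blue, allowing a monochromatic $5$-cycle through the yellow edge; that mechanism is unavailable here.
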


\subsection{Four degree 5 vertices in a diamond}   \label{sec:4deg5inDiamond}
This section answers the following interesting question: Can a diamond in $EP$ have all of its four vertices as degree 5?

   \begin{theorem}[Theorem~\ref{RGB3-thm:4deg5inDiamond} in~\cite{Liu2023III}]    \label{thm:4deg5inDiamond}
Let $a, b, c, d$ be four vertices in a diamond in $EP$. Having all of them as degree 5 is impossible.
   \end{theorem}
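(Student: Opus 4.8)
The idea is to use the fact that four degree-$5$ vertices in a diamond force two interlocking ``$5^3$''-configurations, and then to close the argument with Lemma~\ref{thm:CoalphaNew}. A diamond has a unique edge lying in both of its triangles; after relabeling call it $bd$, so the two triangular facets are $abd$ and $cbd$. If $ac$ were an edge then, since every $3$-cycle of $EP$ is a facet (Theorem~\ref{thm:nontrivial34}(1)), all four $3$-cycles spanned by $\{a,b,c,d\}$ would be facets and $EP$ would equal $K_4$, which is $4$-colorable; hence $a\not\sim c$. Now $b$ and $d$ are two adjacent vertices of degree $5$ — a ``$55$'' configuration as in Section~\ref{sec:rotationDualKempe2} — but with the extra feature that the two vertices of its border $\Phi$ which are common neighbors of $b$ and $d$, namely the diamond apices $a$ and $c$, also have degree $5$; equivalently $\{a,b,d\}$ and $\{c,b,d\}$ are two ``$5^3$'' triangles glued along $bd$. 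I would record the links: $a$'s link is the $5$-cycle $(b,d,a_3,a_m,a_1)$ and $c$'s link is $(b,d,c_3,c_m,c_1)$, where $a_1,a_3$ are the neighbors of $a$ shared with $b$ and with $d$ respectively, $a_m$ is the remaining neighbor of $a$, and similarly for $c$. In particular, in $EP$ the edge $bd$ is one of the five link edges of $a$ and also one of the five link edges of $c$.

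Next I would pass to the $4$-semi-MPG $Q:=EP-\{bd\}$, which is $4$-colorable (Theorem~\ref{thm:eMPG4}), so $\mathcal{RGBT}(Q)\neq\emptyset$, and in every tiling of $Q$ the $bd$-diamond is of Type A or B (Theorem~\ref{thm:RGBNesSuf}), producing a red Kempe chain $K_r|_b^d$ in $Q$. Feeding this configuration to the $55$-machinery of Section~\ref{sec:rotationDualKempe2} (the $ATLAS$ and Theorem~\ref{thm:atlasNew}), I would split into the admissible border colorings $(\#r,\#g,\#b)\in\{(0,0,6),(0,2,4),(2,2,2)\}$ of $Co(\Phi)$ permitted by Lemma~\ref{thm:evenoddRGB}, fix in each case a convenient congruent representative, and then apply $\Sigma$-adjustment (as in Section~\ref{sec:AReview}) — or ECS along the red and green canal rings that turn around $b$ and $d$ — inside the union of the wheels of $a$ and of $a_1,a_m,a_3$, keeping the coloring of the boundary of that region fixed. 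The objective of this rearrangement is to reach a tiling of $Q$ in which the rim path $b$-$a_1$-$a_m$-$a_3$-$d$ of $a$'s wheel is monochromatically red (one would be equally content to monochromatize $c$'s rim path $b$-$c_1$-$c_m$-$c_3$-$d$, using that $c$'s link meets the blue-dominated border, so several of those edges are already forced).

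Once the rim of $a$ is all red, $a$'s link is exactly $K_r|_b^d\cup\{bd\}$ with $bd$ the yellow double-line, so Lemma~\ref{thm:CoalphaNew}(a) — with $e=bd$, $\alpha=b$, $\beta=d$, $N=a$ — gives $\deg(b,d)\ge 6$, contradicting $\deg(b)=\deg(d)=5$; if $c$'s rim was monochromatized instead, take $N=c$. (An equivalent route uses Lemma~\ref{thm:CoalphaNew}(b): $\Sigma$-adjust an R-tiling of $EP$ so that $a$'s link becomes a red $5$-cycle and every red odd-cycle passes through $bd$.) The main obstacle is exactly this monochromatizing step: one must show that the colors forced along $Co(\Phi)$, together with the obligatory red connection $K_r|_b^d$, leave enough freedom in the part of $\Sigma'$ around $a$ (or $c$) to recolor its four rim edges red. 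Here the two degree-$5$ constraints are indispensable and also explain why this configuration is forbidden while a generic $55$- or $5^3$-configuration is not: a degree-$5$ vertex of $EP$ itself can never carry a monochromatic link, because the odd cycle formed by its five spokes cannot be $2$-colored, so the red $5$-cycle demanded by Lemma~\ref{thm:CoalphaNew} becomes attainable only after $bd$ is abandoned — which ties the degree-$5$ apices $a,c$ directly to the forbidden conclusion $\deg(b,d)\ge 6$. I expect the case $(\#r,\#g,\#b)=(2,2,2)$, where the surrounding $4$-cycle of the $bd$-diamond uses two colors and the red and green rings around $b$ and $d$ genuinely interact, to be the delicate one.
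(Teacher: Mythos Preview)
Your plan hinges entirely on the ``monochromatizing step,'' and you correctly flag it as the main obstacle --- but you never carry it out, and in fact the tools you invoke cannot achieve it. Recall that in the $5^3$ analysis (Lemma~\ref{the:3deg5Representative}), the case $[T_1^{5^3}]$ --- precisely the one in which the degree-$5$ apex has a monochromatic rim --- is \emph{eliminated} via Lemma~\ref{thm:CoalphaNew}, leaving $[T_2^{5^3}]$ as the unique congruence-class representative. Since ECS moves are reversible and Theorem~\ref{thm:atlasNew} says all admissible $55$-tilings are congruent, no sequence of ECS operations can land you in a state where a degree-$5$ apex has a monochromatic rim: that state has already been shown not to exist. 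Concretely, two of the four non-yellow rim edges of your apex $a$ (namely $a_1a_m$ and $a_ma_3$) lie on the border $\Omega$ of the $5^4$-region, and in the representative $[T^{5^4}]$ inherited from $[T_2^{5^3}]$ they are blue; $\Sigma$-adjustment keeps $Co(\Omega)$ fixed, so it cannot recolor them red. Your fallback of enlarging the adjustment region to include the wheels of $a_1,a_m,a_3$ is not available either, since the hypothesis says nothing about their degrees.

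The paper closes the argument by a different mechanism. It does not try to trigger Lemma~\ref{thm:CoalphaNew} at the end; instead it superimposes the two copies of $[T_2^{5^3}]$ (one for each $5^3$-triangle sharing the diagonal) to obtain $[T^{5^4}]$ and record the red skeleton in $\Sigma'$: in its labeling $v_1,v_3,v_5$ are mutually red-connected by Kempe chains of even length (even because the corresponding arcs of $\Omega$ are blue). It then performs a single explicit $\Sigma$-adjustment, replacing the red pattern inside the $5^4$-$\Sigma$ by a specific one, and checks directly that the resulting R-tiling on all of $EP$ has no red odd-cycle: every candidate cycle crossing $\Sigma$ either closes up with even length by the recorded parities or is blocked by $K_r|_{v_1}^{v_3}$. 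The contradiction then comes from Theorem~\ref{thm:4RGBtiling} (an R-tiling with no red odd-cycle forces $4$-colorability), not from Lemma~\ref{thm:CoalphaNew}. In short, the paper exploits the skeleton's parity data to manufacture a \emph{good} R-tiling on $EP$, rather than trying to force a \emph{bad} local pattern around an apex.
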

   \begin{proof}
Let $T\!D:=(\{a,b,c,d: \text{in a diamond}\};\deg(a,b,c,d)=5)$ or $5^4$ for short. Let $\Omega:=v_1$-$v_2$-$\ldots$-$v_6$-$v_1$ be the 6-cycle of the neighbors of $T\!D$. 

Let us adopt the second graph $[T_2^{5^3}]$ in Figure~\ref{fig:3deg5a} to fit
$\{a,b,c\}$ and $\{a,b,d\}$; we then obtain only the initial status $[T^{5^4}],$ as illustrated in Figure~\ref{fig:4deg5}. Before we proceed to the main proof, the condition that $v_3\neq v_6$ must be checked.  Theorem~\ref{thm:nontrivial34}(2) offers a proof. Additionally, we known the red-connectivity of $v_1$, $v_3,$ and $v_5$; these three vertices are also red-disconnected with $v_6$. This is another proof that $v_3\neq v_6$. However, $v_1\neq v_3, v_4, v_5$ and similar cases have not been checked. Theorem~\ref{thm:nontrivial34}(1) and $EP$ having its minimum degree 5 offer proofs for these simple cases, respectively.       
   \begin{figure}[h]
   \begin{center}
   \includegraphics[scale=0.9]{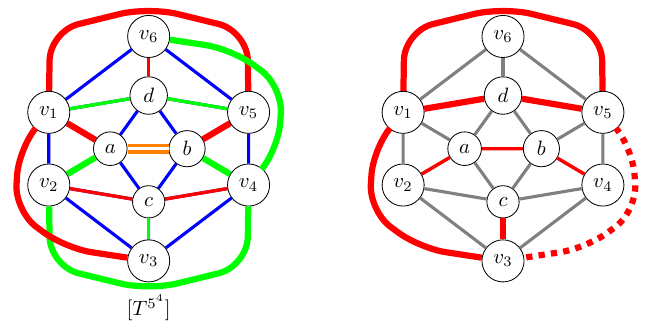}
   \end{center}
   \caption{Diamond with four degree 5's in $EP$}  
   \label{fig:4deg5} 
   \end{figure}

Now we simply rearrange to obtain a new red tiling inside $\Sigma,$ as shown in the second graph in Figure~\ref{fig:4deg5}. The displayed R-tiling has no odd-cycles. If a new cycle is added, it must cross $\Sigma$. First, $K_r|_{v_1}^{v_5}\cup \{dv_1, dv_5\}$ is an even-cycle. The path $v_2$-$a$-$b$-$v_4$ cannot be extended to a larger cycle because it is blocked by $K_r|_{v_1}^{v_3}$. Finally, if $K_r|_{v_5}^{v_3}$ (red dashed line) exists, its length must be even, as revealed by examining the first graph, where $v_5$-$v_4$-$v_3$ has length 2 and all blue. Thus, the largest cycle $K_r|_{v_1}^{v_3}\cup K_r|_{v_5}^{v_3}\cup \{dv_1, dv_5\}$ has even length. According to Theorem~\ref{thm:4RGBtiling}, an R-tiling without red odd-cycle on an MPG must induce a 4-coloring function. This is a contradiction and the proof is done. 
   \end{proof} 

\subsection{Three degree 5 vertices in a triangle, continued}
\label{sec:3degree5cont}

We revisit $T\!D:=(\{a,b,c: \text{in a triangle}\};\deg(a,b,c)=5)$; we must next refer to Lemma~\ref{thm:deg5inT6blue}. Figure~\ref{fig:3deg5b} presents three equivalent RGB-tilings on $\Sigma'$ associated with $T\!D$; that is, a skeleton must exist in $\Sigma'$ if all edges along $\Omega$ are blue. By redrawing that skeleton in $\Sigma'$ but leaving every edge inside $\Sigma$ black, we can obtain the first graph in Figure~\ref{fig:3deg5bb}.  Six vertices, for example, $u_1, u_2, \ldots, u_6$, must surround $T\!D$ because according to Theorem~\ref{thm:4deg5inDiamond}, $\deg(d, v_2, v_4)\ge 6$. {\bf Here, we thus assume the minimum situation $(\ast)$: $\deg(d, v_2, v_4)= 6$ and $\deg(v_1, v_3, v_5)= 5$.} Moreover, assume that $\hat{T\!D}$ has the vertex set $\{a,b,c,d,v_1,\ldots,v_5\}$ and the $(\ast)$ requirement; this results in a new $\hat{\Omega}:=u_1$-$u_2$-$\ldots$-$u_6$ and a new $\hat{\Sigma}$ and $\hat{\Sigma}'$ inside and outside of $\hat{\Omega}$, respectively, as indicated in the second graph in Figure~\ref{fig:3deg5bb}. 

The second graph is the only feasible RGB-tiling on $\Sigma'$ (not only on $\hat{\Sigma}'$) up to synonyms. In particular, all edges in $\hat{\Omega}$ must be blue. Clearly we can find a blue canal ring $bCL$ in between $\Omega$ and $\hat{\Omega}$. After performing ECS on this $bCL$, we obtain a new RGB-tiling on $\Sigma',$ which is shown as the third graph in Figure~\ref{fig:3deg5bb}. Moreover, we can perform $\Sigma$-adjustment by coloring the paths $v_1$-$a$-$b$-$v_5$ and $v_2$-$c$-$v_4$ red. Finally, we obtain a new R-tiling without red odd-cycles. The only red cycle crossing $\hat{\Sigma}$ must have even length.     
   \begin{figure}[h]
   \begin{center}
   \includegraphics[scale=0.9]{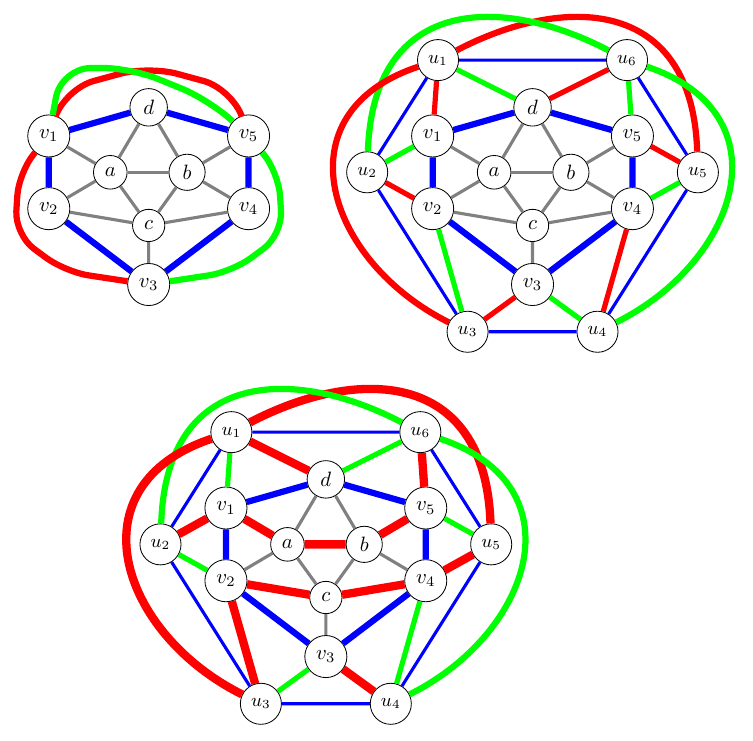}
   \end{center}
   \caption{New $\hat{T\!D}$: a union of $5^3$  and the surrounding $(56)^3$}     \label{fig:3deg5bb} 
   \end{figure}  

        \begin{lemma}[Lemma~\ref{RGB3-thm:5inTri565656} in~\cite{Liu2023III}]  \label{thm:5inTri565656}
Let $EP\in e\mathcal{MPGN}4$ with $a,b,c\in V(EP)$ in a triangle and $\deg(a,b,c)=5,$ as displayed in Figure~\ref{fig:3deg5bb}. The surrounding vertices along $\Omega:=d$-$v_1$-$v_2$-$v_3$-$v_4$-$v_6$ cannot have the following degree property: $\deg(d,v_2,v_4)=6$ and $\deg(v_1,v_3,v_5)=5$.       
        \end{lemma}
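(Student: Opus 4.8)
The plan is to carry the configuration one layer further out, exactly as the text leading into the lemma already sets up, and then force a contradiction with Theorem~\ref{thm:4RGBtiling} by exhibiting an R-tiling of $EP$ with no red odd-cycle. Concretely, I would start from the second graph in Figure~\ref{fig:3deg5bb}: under the assumption $(\ast)$ that $\deg(d,v_2,v_4)=6$ and $\deg(v_1,v_3,v_5)=5$, the six ``second-shell'' vertices $u_1,\dots,u_6$ surround $\hat{T\!D}=\{a,b,c,d,v_1,\dots,v_5\}$, giving the new border $\hat\Omega:=u_1$-$u_2$-$\cdots$-$u_6$, and one checks (via Theorem~\ref{thm:nontrivial34} together with the minimum-degree-5 property of $EP$, just as in the proof of Theorem~\ref{thm:4deg5inDiamond}) that these $u_i$ are genuinely distinct and that $\hat\Omega$ is a clean 6-cycle with no unwanted chords. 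The key structural input is that, propagating the forced blue coloring of $\Omega$ outward through the degree-6 vertices $d,v_2,v_4$, every edge along $\hat\Omega$ must also be blue — this is the ``only feasible RGB-tiling on $\Sigma'$ up to synonyms'' claim, and it follows by applying Lemma~\ref{thm:deg5inT6blue}/the skeleton analysis locally at each of $d,v_2,v_4$, since each sits in a $565656$-type local pattern.

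With $Co(\hat\Omega)$ all blue in hand, I would do two things. First, perform ECS along a blue canal ring $bCL$ running in the annulus between $\Omega$ and $\hat\Omega$ (this is legitimate because that portion of the tiling lies in $\Sigma'$ and is normal), producing the third graph in Figure~\ref{fig:3deg5bb}. Second, apply $\Sigma$-adjustment inside $\hat\Sigma$: recolor the paths $v_1$-$a$-$b$-$v_5$ and $v_2$-$c$-$v_4$ red and, more generally, rearrange all edges interior to $\hat\Omega$ into a valid R-tiling with $Co(\hat\Omega)$ fixed (all blue), so that no red edge touches $\hat\Omega$ and the interior contributes no red odd-cycle on its own. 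This is possible precisely because the interior $\hat\Sigma$ is a small, explicitly drawn semi-MPG whose R-tilings one can write down by hand, and the all-blue boundary means every boundary edge is black in the R-tiling, forcing the parity count of Lemma~\ref{thm:evenoddRGB}(a) to be satisfied automatically.

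The contradiction then comes from analyzing the red cycles of the resulting R-tiling on all of $EP$. Any red cycle either lies entirely in $\hat\Sigma'$ — impossible, since the restriction of the tiling to $\hat\Sigma'$ was induced from a 4-coloring (by induction $EP-\hat{T\!D}$, or rather the relevant outer piece, is 4-colorable via Theorem~\ref{thm:eMPG4}) and hence has no red odd-cycle — or it must cross $\hat\Omega$. A red cycle crossing $\hat\Omega$ must enter and leave through red half-tiles on the (all-blue) boundary, and the segment it traverses inside $\hat\Sigma$ between two such crossing points is pinned down by the explicit interior R-tiling: the relevant interior red path runs between two of $v_1,v_3,v_5$ (or their $u_i$-neighbors) along a route whose length we can read off as even, exactly as in Theorem~\ref{thm:4deg5inDiamond} where $v_5$-$v_4$-$v_3$ had length 2 and was all blue. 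So the unique red cycle crossing $\hat\Sigma$ has even length, giving an R-tiling of $EP$ with no red odd-cycle; by Theorem~\ref{thm:4RGBtiling} this makes $EP$ 4-colorable, contradicting $EP\in e\mathcal{MPGN}4$.

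\textbf{Main obstacle.} The delicate point is not the final parity computation but the rigidity claim: showing that with $\Omega$ forced all-blue, the configuration around the three degree-6 vertices $d,v_2,v_4$ really does propagate to force $\hat\Omega$ all-blue \emph{and} pins the interior R-tiling tightly enough that the only red cycle crossing $\hat\Sigma$ has a controlled, even length. One must rule out alternative local patterns at $d,v_2,v_4$ (other skeletons, or extra yellow double-lines appearing) and confirm $v_3\neq v_6$-type non-degeneracies for the new vertices $u_i$; this is where the ATLAS bookkeeping of Theorem~\ref{thm:atlasNew} and the case analysis behind Lemma~\ref{thm:deg5inT6blue} do the real work, and it is the step I expect to require the most care.
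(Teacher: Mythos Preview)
Your proposal is correct and follows essentially the same approach as the paper: establish that $\hat\Omega$ is forced all-blue, perform ECS along the blue canal ring $bCL$ between $\Omega$ and $\hat\Omega$, then $\Sigma$-adjust by coloring $v_1$-$a$-$b$-$v_5$ and $v_2$-$c$-$v_4$ red to obtain an R-tiling with no red odd-cycle, contradicting Theorem~\ref{thm:4RGBtiling}. You have also correctly identified the rigidity of the $\hat\Omega$ coloring as the point requiring the most care, which the paper dispatches with the single assertion that the second graph in Figure~\ref{fig:3deg5bb} is ``the only feasible RGB-tiling on $\Sigma'$ up to synonyms.''
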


	\begin{remark}
Let us focus on the third graph in Figure~\ref{fig:3deg5bb}. We see $dv_1$ and $dv_5$ blue, then how to color $da$ and $db$? Actually, we shall recolor green and blue along $rCL(da)$ which is the red canal ring passing through $da$. Since this R-tiling has no red odd-cycle, the length of $rCL(da)$ is even and recoloring is feasible. The whole idea is summarized by Theorem~\ref{thm:4RGBtiling}. Once we apply this theorem, we can simply treat green and blue as black.    
	\end{remark}

\subsection{Better result than Lemma~\ref{thm:5inTri565656}}   \label{sec:ABetterResult}

The topic of discussion in Lemma~\ref{thm:5inTri565656} pertains to  $\hat{T\!D}$ with nine vertices. Here, we show a stronger property with a new $T\!D \subseteq \hat{T\!D}$; that is, $T\!D$ is a subgraph of $\hat{T\!D}$.     

        \begin{theorem}  \label{thm:5inTri56}
Suppose $EP\in e\mathcal{MPGN}4$ with $T\!D:=(\{a,b,c,d,v_1: \text{see Figure~\ref{fig:55556}}\};$ $\deg(a,b,c,v_1)=5,\deg(d)=6)$. This is impossible. 
        \end{theorem}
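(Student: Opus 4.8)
\emph{Proof proposal.} The plan is to assume such an $EP$ exists and contradict $EP\in e\mathcal{MPGN}4$ by exhibiting an R-tiling of $EP$ with no red odd-cycle, which by Theorem~\ref{thm:4RGBtiling} forces $4$-colorability. This follows the $\Sigma$-adjustment strategy of the proofs of Theorem~\ref{thm:4deg5inDiamond} and Lemma~\ref{thm:5inTri565656}, of which the statement is the common strengthening (here $v_2,v_3,v_4,v_5$ are unconstrained). First I would fix the local picture of $T\!D=\{a,b,c,d,v_1\}$: $abc$ is a triangle of degree-$5$ vertices, $d$ is the apex of the diamond on $ab$ (the common neighbour of $a,b$ outside $abc$), with $\deg d=6$, and $v_1$ is the middle outer neighbour of $a$ on the side of $d$, with $\deg v_1=5$. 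Using Theorem~\ref{thm:nontrivial34} (every $3$-cycle is a facet, every $4$-cycle spans a diamond) together with minimum degree $5$, one dispatches the routine non-degeneracy: the old $5^3$-border $\Omega_{5^3}:=d$-$v_1$-$v_2$-$v_3$-$v_4$-$v_5$-$d$ is a genuine $6$-cycle, $d$ has exactly two further neighbours $p,u$ (with $u$ the second common neighbour of $d$ and $v_1$), $v_1$ has exactly one further neighbour $q$, all distinct from the $v_i$, and an Euler count gives that the border of $T\!D$ is the $7$-cycle $\hat\Omega:=v_2$-$v_3$-$v_4$-$v_5$-$p$-$u$-$q$-$v_2$, with $\hat\Sigma:=\hat\Omega\cup T\!D$ a triangulated $7$-gon. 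Since $a,b,c$ is exactly the $5^3$-configuration, Lemma~\ref{thm:deg5inT6blue} lets me import its forced structure: all six edges of $\Omega_{5^3}$ are blue, and in $\Sigma'_{5^3}:=EP-\{a,b,c\}$ there is a skeleton in which $v_1,v_3,v_5$ are pairwise red-connected and pairwise green-connected.

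Next I would propagate the forcing across $d$ and $v_1$. From the blue edges $dv_1,dv_5,v_1v_2$ and the triangles $dv_5p$, $dpu$, $duv_1$, $v_1uq$, $v_1qv_2$ one gets in turn that $pu$ and $uq$ are blue, that $\{dp,du\}$ and $\{v_1u,v_1q\}$ are each a red/green pair, and — using Lemma~\ref{thm:evenoddRGB}(b) for a $7$-gon — that $\{v_5p,qv_2\}$ is a red/green pair; a red–green synonym lets me take $v_5p$ red, whence $dp$ is green, $du$ red, $v_1u$ green, $v_1q$ red, $qv_2$ green. Thus $\hat\Omega$ carries exactly one red edge, one green edge, five blue edges. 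Now freeze $Co(\hat\Omega)$ and perform the $\Sigma$-adjustment inside $\hat\Sigma$. Reading this boundary data as an R-tiling ($v_5p$ red, all other rim edges black), the spokes near $d,v_1$ are then forced ($du,v_1q$ red, $dv_5,dp,v_1u,dv_1,v_1v_2$ black), and the familiar $5^3$ choice completes it: colour $v_1$-$a$-$b$-$v_5$ and $v_2$-$c$-$v_4$ red and leave every other interior edge black. A check of the fifteen triangles of $\hat\Sigma$ shows this is a legal R-tiling whose red subgraph consists of the arc $q$-$v_1$-$a$-$b$-$v_5$-$p$ of length $5$, the arc $v_2$-$c$-$v_4$ of length $2$, and the single red edge $d$-$u$ (a half-tile that dead-ends at the interior vertex $d$).

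Outside, I keep the imported tiling on $\hat\Sigma'=EP-T\!D$, read as an R-tiling; it is grand (hence has no red odd-cycle), and deleting $d$ and $v_1$ leaves $q,v_3,v_5$ still pairwise red-connected in $\hat\Sigma'$ (every red chain leaving $v_1$ begins $v_1$-$q$, and the red chains among $v_3,v_5$ do not meet $d,v_1$ internally) while $u$ becomes red-isolated. Gluing along $\hat\Omega$ yields an R-tiling of $EP$, and it remains to show it has no red odd-cycle. Any red cycle must cross $\hat\Omega$, hence is built from sub-arcs of $q$-$v_1$-$a$-$b$-$v_5$-$p$ and/or $v_2$-$c$-$v_4$ inside $\hat\Sigma$ (the half-tile $d$-$u$ cannot lie on a cycle) joined by red chains in the grand tiling of $\hat\Sigma'$; one then checks each such combination has even length, using that a monochromatic-blue boundary arc contributes an even detour, that $v_1$-$a$-$b$-$v_5$ has length $3$ flanked by the blue edges $v_1v_2$ and $v_4v_5$ (read off the $5^3$ picture exactly as in the proofs of Theorem~\ref{thm:4deg5inDiamond} and Lemma~\ref{thm:5inTri565656}), and the parity facts on the red $K_r|_{v_1}^{v_3}$ and $K_r|_{v_3}^{v_5}$ chains established in the $5^3$ analysis (Lemma~\ref{the:3deg5Representative}). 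Hence the glued R-tiling is grand, so by Theorem~\ref{thm:4RGBtiling} $EP$ is $4$-colorable, contradicting $EP\in e\mathcal{MPGN}4$; this $T\!D$ is impossible.

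The hard part is precisely this final parity audit: one must be certain that, after $d$ and $v_1$ are removed, the red connections among $q,v_3,v_5$, the red rim edge $v_5p$, and the half-tile $d$-$u$ cannot recombine with the two interior red arcs to produce an odd cycle. The two delicate points are (i) that the red Kempe chains which in $\Sigma'_{5^3}$ ran near $d$ or $v_1$ do not shift parity when rerouted in $\hat\Sigma'$, and (ii) that $d$-$u$ genuinely dead-ends — which uses $\deg d=6$, $\deg v_1=5$ and the forced local colours. A secondary, routine obstacle is the non-degeneracy bookkeeping of the first step (distinctness of the seven border vertices, honesty of $\hat\Sigma$), handled by Theorem~\ref{thm:nontrivial34} and minimum degree $5$. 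If the direct audit proves awkward, the fallback — mirroring the proof of Lemma~\ref{thm:5inTri565656} — is to first apply ECS along a blue canal ring squeezed between $\Omega_{5^3}$ and $\hat\Omega$ before the $\Sigma$-adjustment, which usually makes the parity count transparent.
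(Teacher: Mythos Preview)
Your $\Sigma$-adjustment does not go through: the parity audit fails, and unavoidably so. In the $4$-coloring of $\Sigma'_{5^3}$ induced by $[T_\alpha^{5^3}]$, the all-blue hexagon $\Phi$ forces $v_1,v_3,v_5$ to share one vertex-color (say $1$) and $v_2,v_4,d$ to share color $2$; consequently every exterior red Kempe chain between two of $v_1,v_3,v_5$ has \emph{even} length. Your interior choice makes $v_1$-$a$-$b$-$v_5$ a red path of length $3$, so the glued R-tiling contains the red odd-cycle obtained by closing this path with the even-length $K_r|_{v_5}^{v_1}$. This is not fixable by a cleverer interior choice: with the hexagon boundary all black there are exactly three interior R-tilings, and each one produces a red path of length $3$ between two of $\{v_1,v_3,v_5\}$ (namely $v_1$-$a$-$b$-$v_5$, $v_5$-$b$-$c$-$v_3$, or $v_1$-$a$-$c$-$v_3$), so the odd cycle is inescapable. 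Your fallback also fails: the $7$-border $\hat\Omega$ carries one red edge and one green edge (your $v_5p$ and $qv_2$), so the blue canal line through the five annular triangles between $\Phi$ and $\hat\Omega$ enters across $v_5p$ and exits across $qv_2$ into $\hat\Sigma'$; no blue canal ring is squeezed between $\Phi$ and $\hat\Omega$.

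This is exactly why the theorem is a genuine strengthening of Lemma~\ref{thm:5inTri565656} and requires a new idea. The paper does not use $\Sigma$-adjustment here: from the forced picture it performs ECS along a \emph{green} generalized canal ring that crosses $\Sigma'$, producing an eRGB-tiling with \emph{two} yellow double-lines ($du_1$ and $v_1u_2$ in the paper's labels), and then a second green ECS lying entirely inside $\Sigma$ (so $\Sigma'$ is untouched). The outcome is an RGB-tiling on $EP-\{v_1u_2\}$ whose $v_1u_2$-diamond has the Type-A pattern on its four surrounding edges but lacks the mandatory blue Kempe chain---so $v_1u_2$ can be colored blue, giving a full RGB-tiling of $EP$ and contradicting $EP\in e\mathcal{MPGN}4$.
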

   \begin{figure}[h]
   \begin{center}
   \includegraphics[scale=0.8]{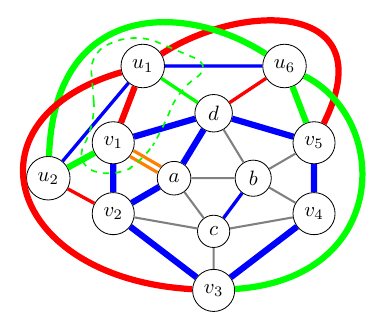}
   \end{center}   
   \caption{New $T\!D$ and its border $\Omega$}     \label{fig:55556} 
   \end{figure}    
   \begin{proof}
For this $T\!D$, the border is $\Omega:=u_1$-$u_2$-$v_2$-$v_3$-$v_4$-$v_5$-$u_6$-$u_1$. Referring to Figure~\ref{fig:55556}, Lemma~\ref{thm:deg5inT6blue} guarantees that the edges of 6-cycle $\Phi:=v_1$-$v_2$-$\ldots$-$v_5$-$d$-$v_1$ must be all blue and we let $ad$ and $av_2$ blue and $av_1$ a yellow double-line.  The coloring pattern of the surrounding edges of blue $cd$ has choices and only use red and green. However, we keep them all black first. Because $\deg(v_1)=5$ and $\deg(d)=6$, the edge-coloring for eight edges outside $\Phi$ but inside $\Sigma$ is unique\footnote{Switching red and green outside $\Phi$ is symmetric; we therefore treat these two results from switching as a single unique result.}. The skeleton in $\Sigma'$ is then inherited from Figure~\ref{thm:deg5inT6blue}.

By performing ECS along the green dashed line in Figure~\ref{fig:55556}, we obtain the first graph in Figure~\ref{fig:55556proof}. This new RGB-tiling is on $EP-\{du_1, v_1u_2\}$; hence, we have double $e$-diamonds. We can now choose an appropriate method of edge coloring inside $\Phi,$ as displayed in Figure~\ref{fig:55556proof}\footnote{This is a key for this proof.}. Assigning $du_1$ and $v_1u_2$ as green (blue) results in a green (blue) odd-cycle passing through $v_1u_2$ ($du_1$ respectively). The new blue Kempe chain passing $du_1$ can be either $K_b|_{u_1}^{v_2}$, $K_b|_{u_1}^{v_3}$, $K_b|_{u_1}^{v_4}$, or $K_b|_{u_1}^{v_5}$.
   \begin{figure}[h]
   \begin{center}
   \includegraphics[scale=0.8]{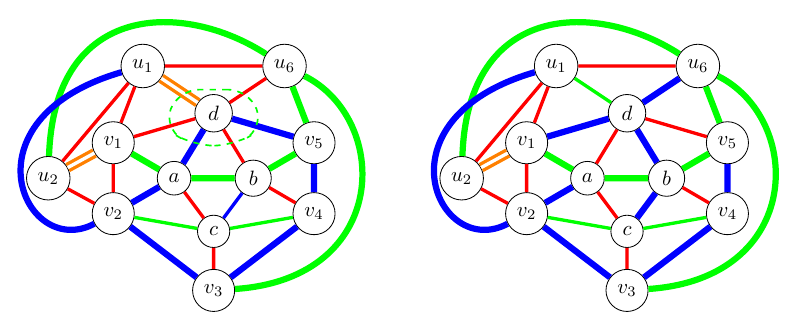}
   \end{center}   
   \caption{Proof of Theorem~\ref{thm:5inTri56}}     \label{fig:55556proof} 
   \end{figure}   

Let us perform another ECS along the green dashed line in the first graph in Figure~\ref{fig:55556proof}. Because this green dashed line stays inside $\Sigma$, this ECS does not change anything in $\Sigma'$\footnote{This is another key for this proof.}. This results in the second graph, which has a single $v_1u_2$-diamond as Type A but no blue odd-cycle; this is a contradiction. 
        \end{proof}

\section{Six vertices of degree 5 in a dumbbell} \label{sec:dumbbell}
Consider a particular $EP\in e\mathcal{MPGN}4$ with six vertices of degree 5 and a dumbbell shape. Figure~\ref{fig:5to6} presents both a dumbbell and another one with six vertices of degree 5. The topic of discussion and the border are respectively
	\begin{eqnarray*}
T\!D &:=& (\{a, b, v_1, v_2, v_4, v_5: \text{in a dumbbell}\};\ \deg(a, b, v_1, v_2, v_4, v_5)=5);\\
\Omega &:=& d\text{-}  u_1\text{-}  u_2\text{-}  u_3\text{-}  c\text{-}  u_4\text{-}  u_5\text{-}  u_6\text{-}  d.
	\end{eqnarray*}
We denote this $T\!D$ as $DumB$.

   \begin{figure}[h]
   \begin{center}
   \includegraphics[scale=0.8]{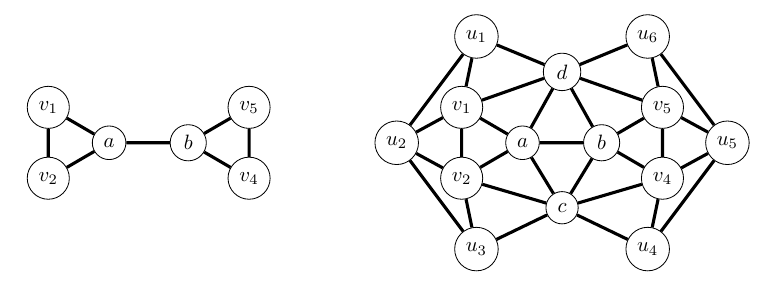}
   \end{center}   
   \caption{Dumbbell $DumB$ and its $\Sigma$}     \label{fig:5to6} 
   \end{figure} 

According to Theorem~\ref{thm:4deg5inDiamond}, we have $\deg(c,d,u_2,u_5)\ge 6$. Furthermore, according to Theorem~\ref{thm:5inTri56}, we have $\deg(c,d)\ge 7$. We plan to prove that for $DumB,$ we have $\deg(c,d)\ge 8$. 

The only two possible RGB-tilings with $ab$ as the $e$-diamond (the single yellow double-line) are denoted as $[T_\alpha^{55}]$ and $[T_\beta^{55}]$ and depicted in Figure~\ref{fig:KempeCRo2deg5basic}; here, $55$ indicates $\deg(a,b)=5$. Each of $[T_\alpha^{55}]$ and $[T_\beta^{55}]$ brings up 5 cases which are denoted by $[X_\alpha]$ and $[X_\beta]$ where $X\in \{A, B, C1, C2, C3\}$. However, $[T_\alpha^{55}] \cong [T_\beta^{55}],$ according to Eq~\ref{eq:T55S09}; hence, we can select either, and we choose $[T_\alpha^{55}]$ for analysis. Please, refer to Figure~\ref{fig:5to6Fivecases}.  In this section, we write $[X_\alpha]$ as $[X]$.

   \begin{figure}[h]
   \begin{center}
   \includegraphics[scale=0.62]
{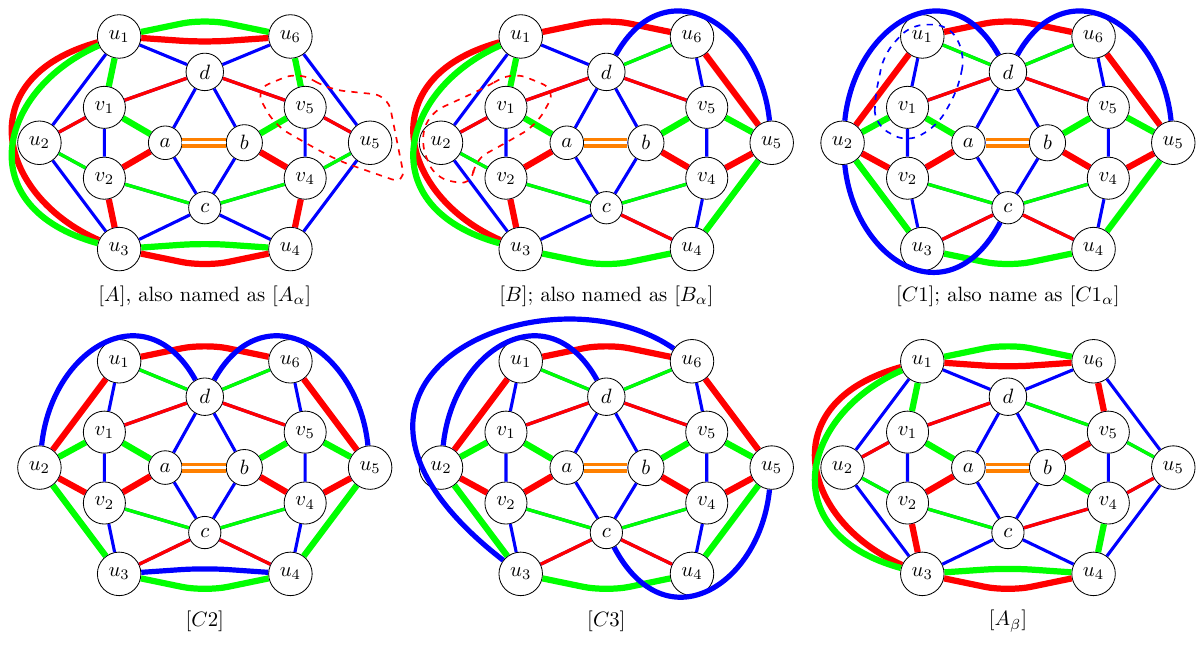}
   \end{center}   
   \caption{Five cases for $T\!D:=DumB$ and $[A_\beta]$}     \label{fig:5to6Fivecases} 
   \end{figure} 

The edges $av_1$, $av_2$, $bv_4,$ and $bv_5$ can be considered four $T\!D$'s, i.e., they are four different $55$; however, their $e$-diamonds are always at edge $ab$. According to $ATLAS$ in Tables~\ref{tb:summary} and~\ref{tb:summary2}, we must only be concerned with $[S_1]$ and $[S_4]$. Put it another way, considering red and green on the triangles $\triangle v_1v_2u_2$ and $\triangle v_4v_5u_5,$ where edges $v_1v_2$ and $v_4v_5$ must be blue, we must handle five cases, as depicted in Figure~\ref{fig:5to6Fivecases}. Listing the insides for these five is trivial, but analyzing the skeleton outside of $\Omega$ requires referring to  $[S_1]$ and $[S_4]$ in Tables~\ref{tb:summary} and~\ref{tb:summary2}. For instance, there are two $[T_\alpha^{5^3}]$ in $[A]$. In $[B]$, $bv_5$ is $55$ of pattern $[S_4]$; therefore, we need to draw a Kempe chain either $K_b|_d^{u_5}$ or $K_b|_c^{u_5}$.   

   \begin{figure}[h]
   \begin{center}
   \includegraphics[scale=0.62]
{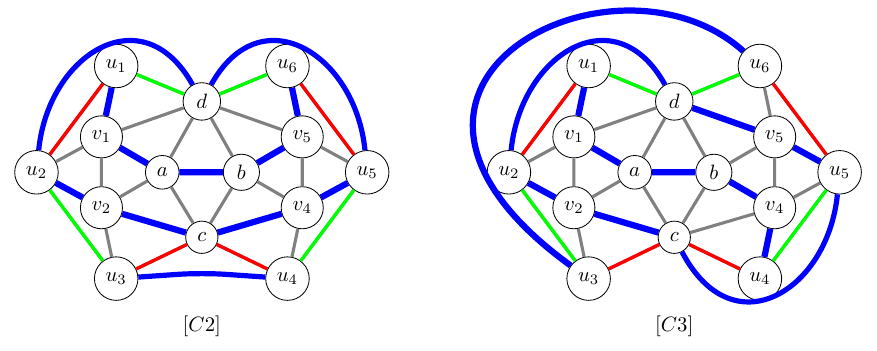}
   \end{center}   
   \caption{Proof by $\Sigma$-adjustment to rule out $[C2]$ and $[C3]$}     \label{fig:CasesC2C3} 
   \end{figure}     
        \begin{lemma} \label{thm:CasesC2C3}
With $T\!D:=DumB$ as defined, refer to Figure~\ref{fig:5to6Fivecases}. Cases $[C2]$ and $[C3]$ are impossible for $EP\in e\mathcal{MPGN}4$.
        \end{lemma}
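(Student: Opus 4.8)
The strategy is the same $\Sigma$-adjustment technique used in Section~\ref{sec:AReview} and in the proof of Theorem~\ref{thm:5inTri56}: start from the forced partial eRGB-tiling for $[C2]$ (respectively $[C3]$) inside $\Sigma$, read off the skeleton in $\Sigma'$ that is forced by $ATLAS$ (Tables~\ref{tb:summary} and~\ref{tb:summary2}, via the patterns $[S_1]$ and $[S_4]$ for the four ``$55$'' sub-diamonds on $av_1,av_2,bv_4,bv_5$), then rearrange the coloring inside $\Sigma$ while keeping $Co(\Omega)$ fixed so as to obtain an R-tiling on $EP$ whose only odd-cycle candidates are forced to be even. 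By Theorem~\ref{thm:4RGBtiling} this yields a 4-coloring of $EP$, contradicting $EP\in e\mathcal{MPGN}4$. Concretely, I would (i) fix the blue edges $v_1v_2$, $v_4v_5$ and the all-blue border $\Phi$ coming from Lemma~\ref{thm:deg5inT6blue}; (ii) using $\deg(v_1,v_2,v_4,v_5)=5$, determine the unique (up to the red/green swap) edge-coloring of the edges inside $\Sigma$ away from the yellow double-line $ab$; (iii) invoke $ATLAS$ to pin down which Kempe chains (e.g.\ $K_b|_d^{u_i}$ or $K_b|_c^{u_i}$, and the corresponding red/green chains) must live in $\Sigma'$ for cases $[C2]$ and $[C3]$; and (iv) perform a short sequence of ECS moves along canal rings that sit \emph{inside} $\Sigma$ (so $\Sigma'$ is untouched) together with one $\Sigma$-adjustment that recolors the internal paths through $a$ and $b$ red, exactly as in Figure~\ref{fig:CasesC2C3}.

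\textbf{Key steps in order.} First I would show that in $[C2]$ and $[C3]$ the coloring along $\Phi$ forces the internal structure around the dumbbell to be rigid enough that after $\Sigma$-adjustment the two ``lobes'' of the dumbbell contribute red paths $v_1$-$a$-$v_2$ and $v_4$-$b$-$v_5$ (or the analogous configuration dictated by which of $[S_1]$, $[S_4]$ applies in each lobe), while the central edge $ab$ remains the yellow double-line or is recolored consistently. Second, I would identify the single relevant odd-cycle candidate: any new red cycle must cross $\Omega$, and by tracking the blue segments along $\Phi$ (each of $v_1$-$v_2$, $v_4$-$v_5$ has length $2$ and is all blue, as in the $5^3$ analysis) one sees that any red Kempe chain $K_r|_{u_i}^{u_j}$ through $\Sigma'$, when closed up through $d$ or through $c$, has forced parity. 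Third, I would assemble the contradiction: the resulting R-tiling on $EP$ has no red odd-cycle, so Theorem~\ref{thm:4RGBtiling} gives a 4-coloring, contradicting $EP\in e\mathcal{MPGN}4$. The bookkeeping of which chains $K_b|_d^{u_5}$ vs.\ $K_b|_c^{u_5}$ (and their $[C3]$ analogues) arise is handled exactly as in the discussion preceding the lemma, where it is noted that the choice among the finitely many possible endpoints ``does not affect the argument.''

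\textbf{Main obstacle.} The delicate point is step (iii)--(iv): verifying that the canal rings used for the ECS moves genuinely lie inside $\Sigma$ (so that the skeleton in $\Sigma'$ is preserved and one is not secretly changing the hypotheses), and simultaneously that the $\Sigma$-adjustment recoloring the internal paths through $a$ and $b$ red does not create a \emph{short} red odd-cycle (a red triangle that is a facet), which would be trivial but would still block the argument. This is the same subtlety flagged in the footnotes to the proof of Theorem~\ref{thm:5inTri56} (``this ECS does not change anything in $\Sigma'$'' and ``this is a key for this proof''). I expect that, as there, a careful choice of the order of the ECS operations — first along a red (or green) generalized canal ring that exits into $\Sigma'$, then along a purely internal blue canal ring — makes every forced cycle even, and that $[C2]$ and $[C3]$ differ only in which lobe's chain must be redrawn, so a single figure (Figure~\ref{fig:CasesC2C3}) covers both with the symmetric/synonym conventions already in force.
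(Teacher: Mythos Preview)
Your high-level strategy is right and matches the paper: perform a $\Sigma$-adjustment to produce a single-color tiling on $EP$ with no odd-cycle of that color, then invoke Theorem~\ref{thm:4RGBtiling} for a contradiction. But the execution you sketch diverges from the paper in two substantive ways, and one of them is an actual error.

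First, the paper does \emph{no} ECS at all for this lemma. The entire proof is a one-step $\Sigma$-adjustment: recolor inside $\Sigma$ (keeping $Co(\Omega)$ fixed) to exhibit a \emph{B}-tiling on $EP$ with no blue odd-cycles, and appeal to Theorem~\ref{thm:4RGBtiling}. There is no sequence of canal-ring switches, no ``first along a generalized canal ring exiting into $\Sigma'$, then an internal blue ring.'' Consequently, the obstacle you flag as the delicate point --- verifying that the ECS rings genuinely lie inside $\Sigma$ --- simply does not arise here; that subtlety is real in Theorem~\ref{thm:5inTri56}, but this lemma is much more direct.

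Second, your proposed $\Sigma$-adjustment is not well-formed. You write that the two lobes should ``contribute red paths $v_1$-$a$-$v_2$ and $v_4$-$b$-$v_5$.'' But $a$-$v_1$-$v_2$ and $b$-$v_4$-$v_5$ are facets of $EP$, so coloring both $av_1,av_2$ red (and both $bv_4,bv_5$ red) puts two red edges in a single triangle and violates the definition of an R-tiling. The analogous move in Lemma~\ref{thm:5inTri565656} colored $v_1$-$a$-$b$-$v_5$ and $v_2$-$c$-$v_4$ red, which is consistent precisely because those paths never place two red edges in the same facet; your dumbbell version does. The paper sidesteps this entirely by working with blue rather than red: the all-blue inner hexagon $\Phi$ and the blue edges $v_1v_2,\,v_4v_5$ make the B-tiling adjustment the natural choice, and for $[C2]$ and $[C3]$ the resulting blue subgraph visibly has no odd-cycle. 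So keep your contradiction schema, drop the ECS machinery, switch color to blue, and the argument collapses to a single picture.
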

        \begin{proof}
This can be simply proven by observing the following two graphs given in Figure~\ref{fig:CasesC2C3}. We can easily check that the two B-tilings have no blue odd-cycles; they can thus induce 4-coloring functions. This modification of the edge-coloring inside $\Sigma$ of $[C2]$ and $[C3]$ is called $\Sigma$-adjustment.    
        \end{proof}

Without interference from cases $[C2]$ and $[C3]$, we then focus on a good property about $[A]$, $[B]$ and $[C1]$ as follows.

        \begin{theorem}   \label{thm:CasesABC1}
With $T\!D:=DumB$ as defined, refer to Figure~\ref{fig:5to6Fivecases}. $[A]\cong [B]\cong [C1]$.
        \end{theorem}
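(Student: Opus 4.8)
The plan is to prove the three congruences $[A]\cong[B]$ and $[B]\cong[C1]$ separately, each by exhibiting an explicit sequence of ECS operations along generalized canal rings, exactly in the spirit of Equation~\ref{eq:T55S09} and the rock-n-roll argument of Figure~\ref{fig:KempeCRo2deg5}. First I would fix the representative $[T_\alpha^{55}]$ for the $ab$-diamond (legitimate by Theorem~\ref{thm:atlasNew}), so that all five cases $[A],[B],[C1],[C2],[C3]$ of Figure~\ref{fig:5to6Fivecases} differ only in how red and green are distributed on the two triangles $\triangle v_1v_2u_2$ and $\triangle v_4v_5u_5$ (with $v_1v_2$ and $v_4v_5$ forced blue). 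Since Lemma~\ref{thm:CasesC2C3} has already eliminated $[C2]$ and $[C3]$, only $[A]$, $[B]$, $[C1]$ remain, and these are precisely the configurations in which each of $av_1,av_2,bv_4,bv_5$ realizes the pattern $[S_1]$ or $[S_4]$ from Tables~\ref{tb:summary} and~\ref{tb:summary2}.

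Next I would carry out the congruences one side of the dumbbell at a time. The key observation is that the two ``lobes'' of $DumB$ (the $5^3$-triangle around $a$ and the $5^3$-triangle around $b$) interact with the rest of $EP$ only through $\Omega$, and that switching red/green on one lobe's outer triangle is exactly the local move that interpolates between $[S_1]$ and $[S_4]$ for the two $55$'s sitting on that lobe. So the plan is: (i) identify, on the $b$-lobe, a green generalized canal ring $gGCL$ that winds around $v_4$ (or $v_5$) in the manner of the $S_0\to S_1$ step of Figure~\ref{fig:KempeCRo2deg5}, and perform ECS along it; this should turn the $[S_4]$-pattern on $bv_5$ into the $[S_1]$-pattern (or vice versa), converting $[B]$ into $[A]$ up to equivalence, giving $[A]\cong[B]$; (ii) repeat the analogous move on the $a$-lobe, using a canal ring around $v_1$ or $v_2$, to pass between $[C1]$ and $[B]$, giving $[B]\cong[C1]$. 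Throughout, I would invoke the general ECS principle recorded in the remark after Theorem~\ref{thm:atlasNew}: the portion of the tiling in $\Sigma'$ induces a genuine $4$-coloring, the dashed canal ring is normal wherever it runs through $\Sigma'$ and only becomes ``generalized'' (crossing the yellow double-line $ab$ and possibly red/green edges) inside $\Sigma$, so ECS always produces a legitimate new eRGB-tiling in the same congruence class.

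I would also record the bookkeeping that makes the argument airtight: after each ECS the skeleton in $\Sigma'$ may change (the particular Kempe chain $K_b|_d^{u_5}$ vs.\ $K_b|_c^{u_5}$, etc., as already flagged in the discussion of Figure~\ref{fig:5to6Fivecases}), but this ambiguity does not affect congruence, just as in the remark following Equation~\ref{eq:T55S09}. One should double check that the canal ring used on the $b$-lobe genuinely closes up into a ring rather than a line with endpoints along $\Omega$; this is guaranteed because it runs along canal banks of a single color and, inside $\Sigma$, picks up the $ab$ yellow double-line to complete the loop, exactly as in the $rGCL$/$gGCL$ construction of Subsection~\ref{sec:RnR}.

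The main obstacle I anticipate is verifying that a \emph{single} canal ring on each lobe suffices to pass all the way from one case to the next, rather than needing a longer composite sequence (and, relatedly, that the ring on the $b$-lobe does not inadvertently disturb the $a$-lobe or the forced-blue edges of $\Phi$). Resolving this amounts to tracing the dashed line in Figure~\ref{fig:5to6Fivecases} carefully and checking at each crossing that the color-swap is consistent with the local tiling --- the same kind of finite, picture-driven verification as in Figures~\ref{fig:KempeCRo2deg5} and~\ref{fig:3deg5a2}. If one ECS does not suffice, the fallback is the standard one: compose two or three ECS operations (e.g.\ an $rGCL$ followed by a $bCL$, as in Figure~\ref{fig:3deg5a2}), which still keeps everything inside one congruence class by transitivity of $\cong$.
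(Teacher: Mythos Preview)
Your overall strategy---chain ECS moves on generalized canal rings to connect $[A]$, $[B]$, $[C1]$---is the right one, and it is the paper's strategy too. But the plan as written has a real gap, and it is not the ``single ECS versus several'' issue you flag at the end.

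The difficulty is your treatment of the Kempe-chain ambiguity in $\Sigma'$ as harmless bookkeeping. In the paper's argument this ambiguity is \emph{not} harmless: after the very first ECS on $[A]$ one lands in a configuration $[A{+}1]$ with \emph{two} yellow double-lines, and the green Kempe chain forced through the new diamond could a priori be either $K_g|_d^{u_2}$ or $K_g|_d^{c}$. Which one actually exists determines where the next generalized canal ring can be routed; if only $K_g|_d^{u_2}$ existed, the sequence would stall. The paper resolves this by a genuine structural argument: it exhibits a $\Sigma$-adjustment $[A{+}1']$ producing a length-$5$ cycle $a$-$c$-$v_4$-$v_5$-$d$-$a$ through the diamond and invokes Lemma~\ref{thm:CoalphaNew}(b) to rule out $K_g|_d^{u_2}$, forcing $K_g|_d^{c}$ (and simultaneously $K_g|_{u_1}^{u_3}$). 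An entirely analogous obstruction arises in the $[B]\cong[C1]$ half at the stage $[B{+}2]$, where one must know that \emph{both} $K_b|_d^{u_2}$ and $K_b|_c^{u_2}$ exist before the needed $bGCL$ can be drawn; the paper settles this by running a short auxiliary ECS sequence from the $[C1]$ side. None of this is visible in your proposal: you model each lobe as an independent copy of the $55$ rock-n-roll, but the canal rings here genuinely leave the lobe, pass through $\Sigma'$, and their closure depends on skeleton information that must be \emph{argued for}, not assumed.

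Secondarily, the count is off: the paper needs roughly five ECS steps (plus the Lemma~\ref{thm:CoalphaNew} detour) for $[A]\cong[B]$ and four more (plus the auxiliary $[C1{+}1]$ computation) for $[B]\cong[C1]$, so ``one canal ring per lobe'' is not close, and even your fallback of two or three would not reach. But the essential missing idea is the use of Lemma~\ref{thm:CoalphaNew} to pin down the skeleton at the branching steps.
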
      
        \begin{proof}
$[A]\cong [B]$: Start with $[A]$ in Figure~\ref{fig:5to6Fivecases} and perform ECS along the red-dashed line (a red canal ring) to obtain the first graph in Figure~\ref{fig:ABC1a}. With double $e$-diamonds, no green Kempe chain involving $ab$ can escape; therefore, a green Kempe chain must pass through the yellow double-line $dv_5$. Therefore, we draw $K_g|_d^{u_2}$ (dashed) and $K_g|_d^c$; at least one of these must exist, and both can exist. However, the dashed $K_g|_d^{u_2}$ should not exist because of the second graph $[A+1']$. In $[A+1']$, $a$-$c$-$v_4$-$v_5$-$d$-$a$ is an odd-cycle of length 5 passing through the $e$-diamond. 
   \begin{figure}[h]
   \begin{center}
   \includegraphics[scale=0.62]
{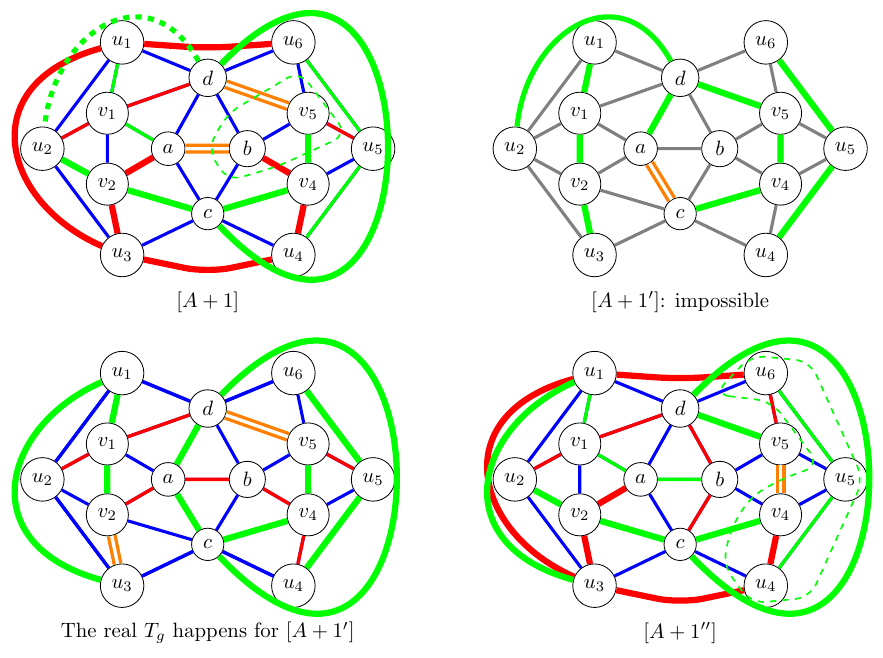}
   \end{center}   
   \caption{Argument on $[A+1]$}     \label{fig:ABC1a} 
   \end{figure} 
This fact contradicts Lemma~\ref{thm:CoalphaNew}(b). To avoid this impossible situation, the real situation must be as in the third graph in Figure~\ref{fig:ABC1a}; that is, it must have two $e$-diamonds, and the dashed $K_g|_d^{u_2}$ shall be replaced by $K_g|_{u_1}^{u_3}$. Then, $K_g|_d^c$ must exist. This conclusion is demonstrated by $[A+1'']$. Note the small change inside $\Sigma$ by observing the thin green dashed ring surrounding $bv_5$ in the first graph $[A+1]$. 

   \begin{figure}[h]
   \begin{center}
   \includegraphics[scale=0.62]
{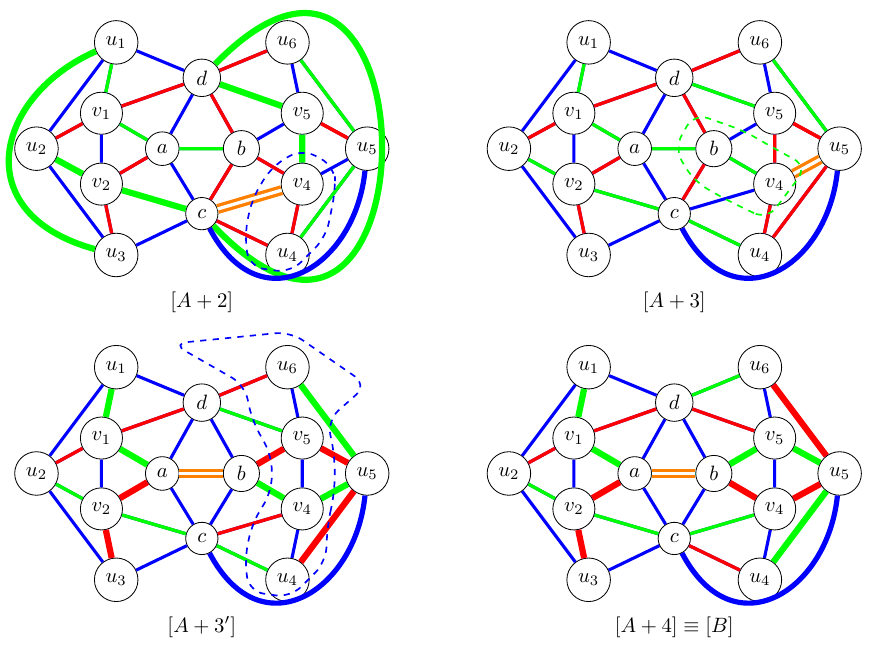}
   \end{center}   
   \caption{$[A+4]\equiv [B]$}     \label{fig:ABC1b}
   \end{figure} 
Starting from $[A+1'']$, we can perform four more ECS operations and then complete the first part of the proof by showing $[A]\cong [A+4]\equiv [B],$ as displayed in Figure~\ref{fig:ABC1b}.

   \begin{figure}[h]
   \begin{center}
   \includegraphics[scale=0.62]
{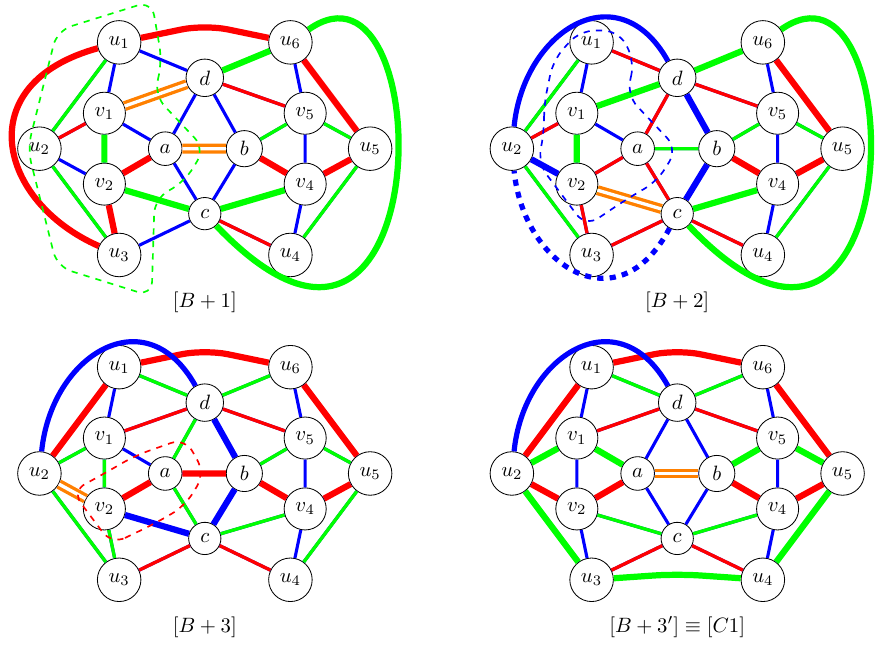}
   \end{center}   
   \caption{$[B+3']\equiv [C1]$}     \label{fig:ABC1c}
   \end{figure} 
$[B]\cong [C1]$: Starting from $[B]$ with a thin red dashed-ring in Figure~\ref{fig:5to6Fivecases}, we obtain the first graph in Figure~\ref{fig:ABC1c}. Keep going to perform three more ECS, then we finish the second part of proof by showing $[B]\cong [C1]$.

   \begin{figure}[h]
   \begin{center}
   \includegraphics[scale=0.62]
{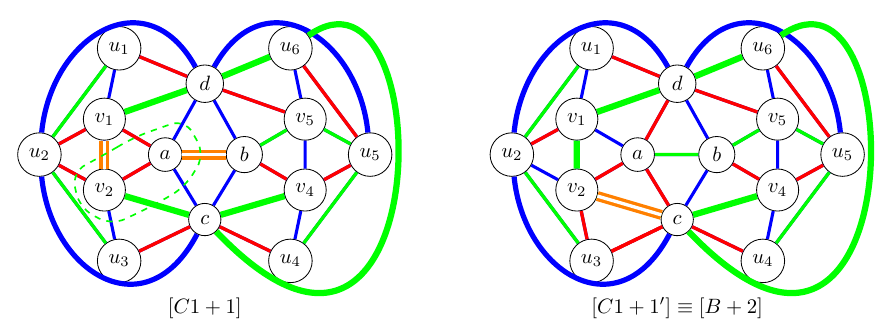}
   \end{center}   
   \caption{$[C1+1']\equiv [B+2]$}     \label{fig:ABC1d}
   \end{figure}    
An important argument for $[B+2]$ in Figure~\ref{fig:ABC1c} need to be claimed clearly. Due to the red $e$-diamond in $[B+2]$, we have at least a blue Kempe chain which is $K_b|_d^{u_2}$ or $K_b|_c^{u_2}$. The thin blue dash-ring $bGCL$ is required not crossing $b$-$c$-$u_4$-$u_5$-$u_6$-$d$-$b$ to avoid any change of them. The way we draw  $bGCL$ is assisted by $K_b|_d^{u_2}$. However, is it possible that only $K_b|_c^{u_2}$ exists but $K_b|_d^{u_2}$ does not? The answer is no. Let us back to $[C1]$ in Figure~\ref{fig:5to6Fivecases}. We perform ECS along that blue dash-ring and obtain $[C1+1]$ in Figure~\ref{fig:ABC1d}. By one more ECS along $gGCL$ which is inside $\Sigma$, we reach $[C1+1']\equiv [B+2]$. Now we confirm that both $K_b|_d^{u_2}$ and $K_b|_c^{u_2}$ exist in $[B+2]$. The equivalence $[C1+1']\equiv [B+2]$ offers another proof for $[B]\cong [C1]$.
	\end{proof}   

	\begin{theorem}   \label{thm:Adeg8}
Follow the setting for $T\!D:=DumB$ and refer to Figure~\ref{fig:5to6Fivecases}. We have $\deg(c,d)\ge 8$. 
	\end{theorem}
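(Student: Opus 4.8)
\textbf{Proof proposal for Theorem~\ref{thm:Adeg8}.}
The plan is to argue by contradiction: assume that one of $c,d$ has degree exactly $7$ (we already know $\deg(c,d)\ge 7$ from Theorem~\ref{thm:5inTri56}), and derive a $4$-coloring of $EP$, contradicting $EP\in e\mathcal{MPGN}4$. By Theorem~\ref{thm:CasesABC1}, the five cases $[A],[B],[C1],[C2],[C3]$ collapse: $[C2]$ and $[C3]$ are already excluded by Lemma~\ref{thm:CasesC2C3}, and $[A]\cong[B]\cong[C1]$, so it suffices to analyze a single representative---I would take $[C1]$, since there the edges along $\Omega$ and the skeleton in $\Sigma'$ are arranged most symmetrically. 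First I would fix the combinatorial structure forced by $\deg(c,d)=7$: each of $c$ and $d$ sits on $\Omega$ with exactly one neighbour strictly outside $\Sigma$ (in the notation of Figure~\ref{fig:5to6}, $d$ is adjacent to $u_1,\dots,u_6$ already on $\Omega$, so $\deg(d)=8$ unless two of those putative neighbours coincide or the external neighbourhood is smaller; the degree-$7$ hypothesis pins down exactly which identification occurs). This is the routine bookkeeping step, handled the same way as the checks $v_3\neq v_6$, $v_1\neq v_3$, etc.\ in the proof of Theorem~\ref{thm:4deg5inDiamond}, using Theorem~\ref{thm:nontrivial34} and the minimum-degree-$5$ property of $EP$.

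Next I would run the $\Sigma$-adjustment engine. Starting from $[C1]$ (equivalently $[A]$ or $[B]$ after the congruences of Theorem~\ref{thm:CasesABC1}), replace the RGB-tiling inside $\Sigma$ by a fresh $R$-tiling that colors the internal spine $v_1$-$a$-$b$-$v_5$ and the cross-edges $v_2$-$c$, $c$-$v_4$ (and the analogous edges around the second bell of the dumbbell) red, leaving the skeleton in $\Sigma'$ intact and treating green and blue as black, exactly as in the proof of Lemma~\ref{thm:5inTri565656} and Theorem~\ref{thm:4deg5inDiamond}. The resulting $R$-tiling on $EP$ has no red odd-cycle inside $\Sigma$; any red odd-cycle must therefore cross $\Omega$, and I would enumerate the possible red Kempe chains between the relevant vertices of $\Omega$ (the candidates $K_r|_{u_i}^{u_j}$ as listed in the $ATLAS$ tables). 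The parity of each such chain is controlled by how many blue edges of $\Omega$ it spans---and the key point is that with $\deg(c,d)=7$ there are \emph{too few} vertices on $\Omega$ between $c$ and $d$ for any of these chains to close up into an odd cycle, so every red cycle crossing $\Omega$ has even length. Then Theorem~\ref{thm:4RGBtiling} yields a $4$-coloring of $EP$, the desired contradiction; hence $\deg(c,d)\ge 8$.

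The main obstacle I expect is the case analysis in the middle step: when $\deg(c,d)=7$ the border $\Omega$ is short, so several of the vertices $u_i$ that are drawn as distinct in Figure~\ref{fig:5to6} must be identified, and each identification pattern gives a slightly different $\Sigma'$ and a different menu of admissible Kempe chains. One must check that \emph{in every such pattern} the $\Sigma$-adjustment of the previous paragraph produces an $R$-tiling whose only cycles crossing $\Omega$ are even---equivalently, that no odd closed red walk can be formed from a red Kempe chain in $\Sigma'$ together with the forced red spine inside $\Sigma$. I would organize this by the same parity argument used for the $5^4$ and $5^3$-continued cases: the red spine inside $\Sigma$ always contributes an even contribution to any cycle (its endpoints $v_1,v_5$ are an even distance apart along the blue portion of $\Phi$, and likewise for the second bell), so oddness could only come from $\Sigma'$, but the skeleton in $\Sigma'$ is precisely the one inherited from Lemma~\ref{thm:deg5inT6blue}, which red-connects and green-connects the $v_i$'s in a parity-consistent way. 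Once that is verified for each identification pattern, the proof closes uniformly. A secondary technical point is to make sure the $\Sigma$-adjustment does not accidentally disturb the skeleton in $\Sigma'$; this is guaranteed, as in the footnoted remarks of the proof of Theorem~\ref{thm:5inTri56}, because all recolorings are performed along canal rings lying entirely inside $\Sigma$ or along $\Omega$ itself.
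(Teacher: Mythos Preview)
Your approach diverges substantially from the paper's, and the argument as sketched has a real gap.

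First, a structural slip: in the dumbbell $\Sigma$, the vertex $d$ is adjacent only to $u_1,u_6,v_1,a,b,v_5$ --- six neighbours inside $\Sigma$ --- not to all of $u_1,\dots,u_6$. Thus $\deg(d)=7$ simply means $d$ has exactly one further neighbour in $\Sigma'$; it forces no identifications among the $u_i$, and $\Omega$ is the same $8$-cycle regardless of $\deg(c,d)$. Your claim that ``with $\deg(c,d)=7$ there are too few vertices on $\Omega$ between $c$ and $d$ for any of these chains to close up into an odd cycle'' therefore has no force: the degree hypothesis does not shrink $\Omega$ and does not restrict which Kempe chains $K_r|_{u_i}^{u_j}$ can occur in $\Sigma'$. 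In fact your proposed $\Sigma$-adjustment never consumes the hypothesis $\deg(c,d)=7$ at all, so if it really produced an R-tiling without red odd-cycles, it would do so for every $\deg(c,d)\ge 7$ --- which is impossible since $EP$ is assumed non-$4$-colorable.

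The paper's proof is direct rather than by contradiction, and it does not run a fresh $\Sigma$-adjustment. It reuses the eRGB-tiling $[A+1]$ (equivalently $[A+1'']$) already constructed in the proof of Theorem~\ref{thm:CasesABC1}, which carries the green Kempe chains $K_g|_{u_1}^{u_3}$ and $K_g|_c^d$ in $\Sigma'$. One further ECS along a green generalized canal ring produces a tiling $[A{+}\mathrm{II}]$ with three $e$-diamonds; replacing the three yellow double-lines by red then forces a red odd-cycle through one of the two candidate chains $K_r|_c^d$ or $K_r|_{u_5}^d$. The option $K_r|_{u_5}^d$ is eliminated by exactly the Lemma~\ref{thm:CoalphaNew} argument used for ``$[A+1']$: impossible'' (it would yield a red $5$-cycle around a degree-$5$ vertex with an endpoint of degree $5$), so $K_r|_c^d$ must exist, and $K_r|_{u_4}^{u_6}$ comes along as a by-product. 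Now in $[A{+}\mathrm{II}]$ each of $c$ and $d$ has, besides its six neighbours inside $\Sigma$, both a green edge (from $K_g|_c^d$, untouched by the green ECS) and a distinct red edge (from $K_r|_c^d$) leaving into $\Sigma'$; that is $\deg(c,d)\ge 8$. The ingredient your outline is missing is precisely this use of Lemma~\ref{thm:CoalphaNew} to kill one of the two candidate Kempe chains and thereby \emph{force} extra edges at $c$ and $d$.
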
	
	\begin{proof}
We see $[C1+1']$ having $\deg(c)\ge 8$ in Figure~\ref{fig:ABC1d}. However, we would like to show $\deg(c,d)\ge 8$ by a single graph.

Let us start with $[A+1]$ in Figure~\ref{fig:ABC1a}. It must have $K_g|_{u_1}^{u_3}$. We copy $[A+1]$ as the first graph in Figure~\ref{fig:Adeg8} and perform ECS along the thin green dashed-ring. The result is $[A {+\text{II}}]$. There are three $e$-diamonds in  $[A {+\text{II}}]$. When we replace three yellow double-lines by red and also treat all green and blue edges as black color, we find that at least one of $K_r|_c^d$ and $K_r|_{u_5}^d$ exists.  
   \begin{figure}[h]
   \begin{center}
   \includegraphics[scale=0.62]
{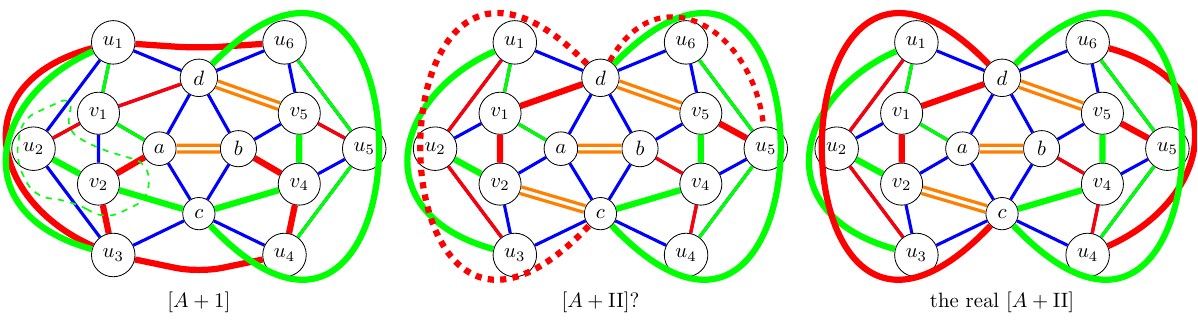}
   \end{center}   
   \caption{Why $\deg(c,d)\ge 8$?}     \label{fig:Adeg8}
   \end{figure} 
We just follow the same discussion about ``$[A+1']$: impossible'' demonstrated in Figure~\ref{fig:ABC1a}, then we can rule $K_r|_{u_5}^d$ out. Therefore, we are sure that $K_r|_c^d$ makes the only red odd-cycle passing through $cv_2$. The existence of $K_r|_{u_4}^{u_6}$ is also a consequence of no $K_r|_{u_5}^d$. 

By $[A{+\text{II}}]$, we know $\deg(c,d)\ge 8$.  
	\end{proof}

	\begin{corollary}  \label{thm:cong}
Given $EP:=(EP;Dumb)\in e\mathcal{MPGN}$, we have a congruent class of eRGB-tilings on $EP$ containing $[X_\alpha]$ and $[X_\beta]$ for for $X\in\{A, B, C1\}$, as well as $[A+1]$, $[A+2]$, $[A+3]$, $[B+1]$, $[B+2]$, $[B+3]$, $[C1+1]$, and $[A+\rm{II}]$. 	
	\end{corollary}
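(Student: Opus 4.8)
The plan is to assemble the corollary directly from the chain of congruences already established in Theorems~\ref{thm:CasesABC1} and~\ref{thm:Adeg8} and Lemma~\ref{thm:CasesC2C3}. First I would recall that by Eq.~\ref{eq:T55S09} we have $[T_\alpha^{55}]\cong[T_\beta^{55}]$, so every $\alpha$-variant tiling is congruent to its $\beta$-counterpart; this immediately pairs $[X_\alpha]$ with $[X_\beta]$ for each $X$, and more generally pairs each auxiliary tiling produced in the $\alpha$-world with the mirror tiling in the $\beta$-world. Thus it suffices to prove that all the listed $\alpha$-side representatives lie in one congruence class. Since Lemma~\ref{thm:CasesC2C3} rules $[C2]$ and $[C3]$ out as genuine cases for $EP\in e\mathcal{MPGN}4$, only $[A]$, $[B]$, $[C1]$ and their ECS-descendants remain to be tied together.

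Next I would invoke Theorem~\ref{thm:CasesABC1}, which gives $[A]\cong[B]\cong[C1]$, so these three sit in a single congruence class $\mathcal{C}$. The intermediate tilings $[A+1]$, $[A+2]$, $[A+3]$, $[B+1]$, $[B+2]$, $[B+3]$, and $[C1+1]$ are all obtained from members of $\mathcal{C}$ by finite sequences of ECS operations along (generalized) canal rings — this is exactly how they were constructed in Figures~\ref{fig:ABC1a}, \ref{fig:ABC1b}, \ref{fig:ABC1c}, and~\ref{fig:ABC1d}. Since congruence $\cong$ is by definition generated by VCS/ECS moves and is therefore an equivalence relation closed under such moves, each of these belongs to $\mathcal{C}$ as well. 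Finally, $[A+\mathrm{II}]$ from the proof of Theorem~\ref{thm:Adeg8} is obtained from $[A+1]\in\mathcal{C}$ by one further ECS along a green canal ring, so it too lies in $\mathcal{C}$. Applying Eq.~\ref{eq:T55S09} once more gives the $\beta$-side members, and the corollary follows.

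The main obstacle — really the only point requiring care — is bookkeeping: one must verify that each ``$+k$'' tiling appearing in the statement was in fact reached by ECS moves from a tiling already placed in $\mathcal{C}$, and that no move in those figures secretly altered $Co(\Omega)$ in a way that would break the $\equiv$-relations used to glue the pieces. In particular the passages where double or triple $e$-diamonds appear (e.g. $[A+1'']$, $[A+\mathrm{II}]$) need the observation, already made in the relevant remarks, that ECS along a ring lying inside $\Sigma$ leaves $\Sigma'$ untouched, while ECS along a ring crossing $\Sigma'$ still preserves the skeleton up to equivalence. Once that is checked, transitivity of $\cong$ closes the argument with no further computation.

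\begin{proof}
By Eq.~\ref{eq:T55S09} we have $[T_\alpha^{55}]\cong[T_\beta^{55}]$, and performing on the $\beta$-side the mirror images of the ECS sequences used on the $\alpha$-side shows $[X_\alpha]\cong[X_\beta]$ for each $X\in\{A,B,C1\}$, and likewise that every auxiliary tiling listed for the $\alpha$-side is congruent to the corresponding one for the $\beta$-side. Hence it suffices to place all the $\alpha$-representatives in a single congruence class. By Lemma~\ref{thm:CasesC2C3} the cases $[C2]$ and $[C3]$ do not occur, so only $[A],[B],[C1]$ and their descendants remain. Theorem~\ref{thm:CasesABC1} gives $[A]\cong[B]\cong[C1]$; call this common class $\mathcal{C}$. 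Each of $[A+1],[A+2],[A+3],[B+1],[B+2],[B+3],[C1+1]$ is obtained from a member of $\mathcal{C}$ by a finite sequence of ECS operations (Figures~\ref{fig:ABC1a}--\ref{fig:ABC1d}), and ECS along a generalized canal ring either leaves $\Sigma'$ unchanged or preserves its skeleton up to equivalence; since $\cong$ is an equivalence relation generated by such moves, all these tilings lie in $\mathcal{C}$. Finally $[A+\mathrm{II}]$ is obtained from $[A+1]\in\mathcal{C}$ by one further ECS (Figure~\ref{fig:Adeg8}), so $[A+\mathrm{II}]\in\mathcal{C}$ as well. Transitivity of $\cong$ and the $\alpha$--$\beta$ correspondence above complete the proof.
\end{proof}
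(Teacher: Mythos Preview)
Your proposal is correct and follows exactly the intended approach: the paper states Corollary~\ref{thm:cong} without proof precisely because it is an immediate assembly of the congruences already exhibited in the proofs of Theorems~\ref{thm:CasesABC1} and~\ref{thm:Adeg8} (via the ECS sequences in Figures~\ref{fig:ABC1a}--\ref{fig:Adeg8}), together with the $\alpha$--$\beta$ identification coming from Eq.~\ref{eq:T55S09}. Your write-up makes this bookkeeping explicit and is essentially what the author leaves to the reader.
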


\section{Does $EP^\perp$ belong to $e\mathcal{MPGN}4$}  \label{sec:EPperp}

Let us redraw the original $EP:=(EP; DumB)$ in a new shape as the left graph in Figure~\ref{fig:DumBandM}.  We also draw $EP^\perp$ as the middle graph which is made by the same $\Omega$ and $\Sigma'$ of $EP$, but the new interior $\Sigma^\perp:= \Sigma(EP^\perp; \Omega)$. 
   \begin{figure}[h]
   \begin{center}
   \includegraphics[scale=0.64]{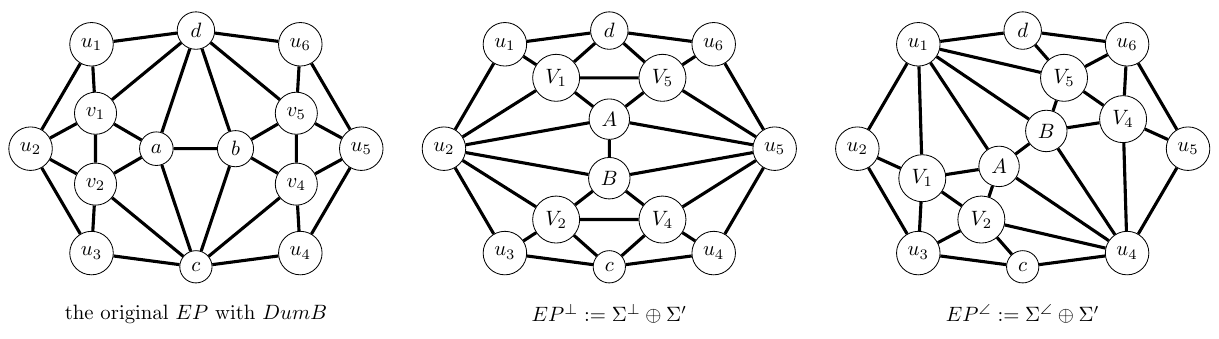}
   \end{center}   
   \caption{The original $EP:=\Sigma\oplus \Sigma'$ and $\Sigma^\perp\oplus \Sigma'$, $\Sigma^\angle  \oplus \Sigma'$}     \label{fig:DumBandM} 
   \end{figure} 
The topic of discussion of $EP^\perp$ is $DumB^\perp =\{A, B, V_1, V_2, V_4, V_5\}$ and this $\Sigma^\perp$ is obtained by rotating the original $DumB$ with $90^\circ$ and still keeping $\deg(A, B, V_1, V_2, V_4, V_5)=5$. 

The new MPG $EP^\angle:=\Sigma^\angle  \oplus \Sigma'$ shown as the right graph in Figure~\ref{fig:DumBandM} is defined in a similar way. We are interested to determine whether $EP^\perp$ and $EP^\angle$ belong to $e\mathcal{MPGN}4$ or not. 

Besides $EP^\perp$, a similar question is regarding $(EP;5^3)^\triangle$. Here the new MPG $(EP;5^3)^\triangle$, shown as the right graph in Figure~\ref{fig:555}, has $T\!D=(\{A,B,C\};\deg(A,B,C)=5)$ and the same $\Sigma'$ from the original $(EP;5^3)$ given in Figures~\ref{fig:3deg5a2} and~\ref{fig:3deg5b}. 
   \begin{figure}[h]
   \begin{center}
   \includegraphics[scale=0.75]{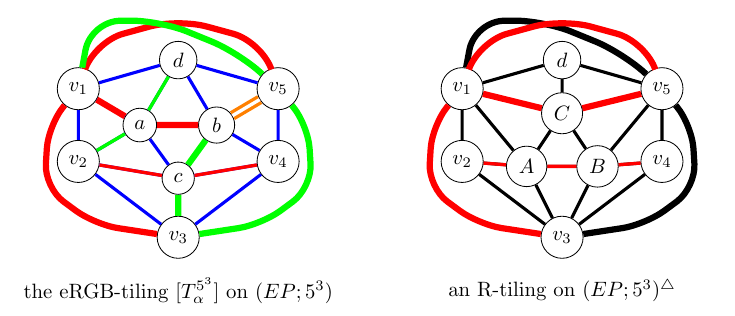}
   \end{center}   
   \caption{The original $(EP;5^3)$ and $(EP;5^3)^\triangle$}     \label{fig:555} 
	\end{figure} 

	\begin{theorem}  \label{thm:DumBandM4col}
(a)	If $(EP;5^3) \in e\mathcal{MPGN}4$  then $(EP;5^3)^\triangle$ is 4-colorable. (b) If $(EP; DumB) \in e\mathcal{MPGN}4$  then both $EP^\perp$ and $EP^\angle$ are 4-colorable.
	\end{theorem}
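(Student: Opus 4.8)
I would prove this by exhibiting, in each case, an explicit RGB-tiling (equivalently, by Theorem~\ref{thm:4RGBtiling}, an R-tiling with no red odd-cycle) of the modified graph. The feature to exploit is that the modified graph is glued to the \emph{same} exterior $\Sigma'$ as the original, and the hypothesis ``original $\in e\mathcal{MPGN}4$'' already pins down the tiling of $\Sigma'$ very tightly.

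First I would record the forced structure of $\Sigma'$. For part (a): since $(EP;5^3)\in e\mathcal{MPGN}4$, Lemmas~\ref{the:3deg5Representative} and~\ref{thm:deg5inT6blue} force $\Sigma'=\Sigma'((EP;5^3);\Omega)$ to carry a skeleton with every edge of the hexagon $\Omega$ blue, in which $v_1,v_3,v_5$ are mutually red-connected and mutually green-connected inside $\Sigma'$; moreover the $\Sigma$-adjustment of Figure~\ref{fig:3deg5bb} lets us take the realizing Kempe chains of even length. For part (b): Theorem~\ref{thm:CasesABC1}, Lemma~\ref{thm:CasesC2C3}, Theorem~\ref{thm:Adeg8}, and Corollary~\ref{thm:cong} likewise force the exterior $\Sigma'$ of $(EP;DumB)$ to carry an all-blue-$\Omega$ tiling with a known skeleton, while $\deg(c,d)\ge 8$ guarantees that the rotated interior $\Sigma^\perp$ (resp.\ $\Sigma^\angle$) really does fit against $\Omega$.

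Next I would build the candidate tiling of the modified graph: keep the forced tiling on $\Sigma'$ verbatim, then fill the new interior ($\Sigma^\triangle$, $\Sigma^\perp$, or $\Sigma^\angle$) with the local all-blue-boundary R-tiling that a $5^3$ (resp.\ rotated-dumbbell) configuration admits --- read off, as in Figures~\ref{fig:3deg5a2}--\ref{fig:3deg5bb}, by three-coloring the two triangles at each degree-5 vertex and then choosing the interior red paths. Finally I would check the absence of monochromatic odd-cycles. Every red/green connection of $\Sigma'$ is realized \emph{inside} $\Sigma'$, so the only cycles that can fail are those closing a $\Sigma'$-chain through the new interior; for each such cycle I would combine Lemma~\ref{thm:evenoddRGB}(b) (no red and no green edge along the all-blue $\Omega$, so each interior red or green path joining two $\Omega$-vertices has the parity forced by their colors in the induced $4$-coloring) with the even lengths from the previous step to conclude every such cycle is even, whence the modified graph is $4$-colorable.

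The hard part is precisely this last step, and specifically to show that it is the \emph{rotation} that rescues us: in the original graph the very same closures are odd --- that is why the yellow double-line could not be recolored and the graph was a genuine counterexample --- so one must verify that shifting the degree-5 cluster relative to the fixed boundary $\Omega$ changes, for each relevant pair of boundary vertices, the parity of the interior connecting path. This is a finite but fiddly check over the hexagon (resp.\ octagon), and it is where the degree data already extracted --- $\deg(d,v_2,v_4)\ge 6$ from Theorem~\ref{thm:4deg5inDiamond}, $\deg(c,d)\ge 8$ from Theorem~\ref{thm:Adeg8}, together with the exclusions of Theorem~\ref{thm:5inTri56} and Lemma~\ref{thm:5inTri565656} --- must be invoked, so that the only sub-structures that would re-instate a forbidden pattern (four degree-5 vertices in a diamond, or the $(\ast)$ pattern of Lemma~\ref{thm:5inTri565656}) are ruled out in the modified graph. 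As a fallback, should such a forbidden pattern nevertheless appear, one argues directly instead: the modified graph would then not lie in $e\mathcal{MPGN}4$, hence, being an MPG on $\omega$ vertices, it is $4$-colorable --- the desired conclusion.
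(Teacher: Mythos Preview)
Your overall strategy---keep the exterior tiling of $\Sigma'$ that the hypothesis forces, fill the new interior, and check that no monochromatic odd-cycle closes---is exactly what the paper does. For part~(a) and for $EP^\angle$ your plan essentially matches the paper's proof: it uses $[T_\alpha^{5^3}]$ (resp.\ $[A_\alpha]=[A_\beta]$), both of which do have all-blue $\Omega$, and the relevant red cycles close with even length.

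For $EP^\perp$, however, there is a genuine gap. The ``all-blue-$\Omega$'' R-tiling approach does not succeed: there are only two R-tilings of $\Sigma^\perp$ with no red edge on $\Omega$, and when either is glued to the red skeleton of $[A]$ on $\Sigma'$ (in which $u_1,u_3,u_4,u_6$ are pairwise red-connected), a red odd-cycle is produced. So the ``finite but fiddly check'' you defer would in fact fail, not succeed. The paper's fix is to exploit the congruence class of Corollary~\ref{thm:cong} and pick a \emph{different} representative, namely $[A{+\mathrm{II}}]$, which has two red and two green edges on $\Omega$; it then runs a B-tiling argument rather than an R-tiling one. With that choice the only chain that could close through the new interior is a possible $K_b|_{u_2}^{u_5}$, and Lemma~\ref{thm:evenoddRGB}(b), applied to the cycle $K_b|_{u_2}^{u_5}\cup(u_2\text{-}u_1\text{-}d\text{-}u_6\text{-}u_5)$, forces it to have odd length, so the resulting blue hexagon in $\Sigma^\perp$ together with this chain is even. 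This choice of representative is the missing idea.

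Two smaller points. First, $\deg(c,d)\ge 8$ is not used to make $\Sigma^\perp$ ``fit'' against $\Omega$---that is by construction---but rather emerges from the same analysis (Theorem~\ref{thm:Adeg8}) that produces $[A{+\mathrm{II}}]$; it is the existence of $[A{+\mathrm{II}}]$ in the congruence class, not the degree bound per se, that the proof invokes. Second, your fallback is logically sound (an MPG on $\omega$ vertices outside $e\mathcal{MPGN}4$ is indeed $4$-colorable), but there is no reason the forbidden substructures of Theorems~\ref{thm:4deg5inDiamond} or~\ref{thm:5inTri56} must appear in $EP^\perp$, so the fallback does not close the gap.
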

	\begin{proof}
(a): The right graph in Figure~\ref{fig:555} shows an R-labeling on $(EP;5^3)^\triangle$ that uses the portion on $\Sigma'$ obtained from $[T^{5^3}_\alpha]$. Obviously this R-labeling has no red odd-cycles. Thus, $(EP;5^3)^\triangle$ is 4-colorable by Theorem~\ref{thm:4RGBtiling}.

(b): By Corollary~\ref{thm:cong}, the eRGB-tiling $[A+\rm{II}]$ on $(EP; DumB)$ exists. 
The left graph in Figure~\ref{fig:DumBandM007} shows an B-labeling on $(EP; DumB^\perp)$ that uses the portion on $\Sigma'$ obtained from $[A+\rm{II}]$ in Figure~\ref{fig:Adeg8}.	Since we are considering a B-labeling, all red and green edges are now colored gray, except for two red and two green edges along $\Omega$. Inside $\Sigma^\perp$, we also have a B-tiling, particularly a blue hexagon. 
    \begin{figure}[h]
   \begin{center}
   \includegraphics[scale=0.7]{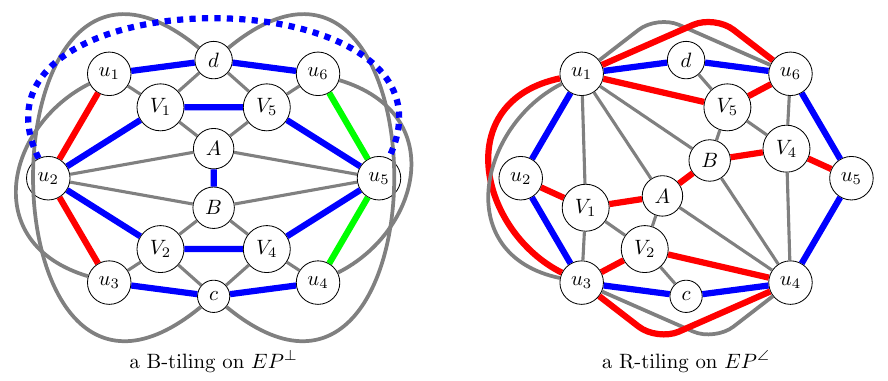}
   \end{center}   
   \caption{$EP^\perp$ and $EP^\angle$ are 4-colorable}     \label{fig:DumBandM007} 
   \end{figure} 
A possible Kempe chain $K_b|_{u_2}^{u_5}$ is shown as a dashed line. Considering the cycle $K_b|_{u_2}^{u_5}\cup$ $u_2$-$u_1$-$d$-$u_6$-$u_5$ and  Lemma~\ref{thm:evenoddRGB}(b), we can conclude that $K_b|_{u_2}^{u_5}$ must be of odd length. Therefore, the graph depicts a B-tiling on $EP^\perp$ without any blue odd-cycles, even if the blue dashed $K_b|_{u_2}^{u_5}$ exists. Thus, $EP^\perp$ is 4-colorable by Theorem~\ref{thm:4RGBtiling}.  

As for $EP^\angle$, the proof is easy. We use the portion of R-tiling on $\Sigma'$ obtained from $[A_\alpha]$ and $[A_\beta]$, which are actually the same, in Figure~\ref{fig:5to6Fivecases}. The key property is that $v_1$, $v_3$, $v_4$, and $v_6$ are red-connected in $\Sigma'$. The right graph in Figure~\ref{fig:DumBandM007} depicts an R-tiling on $EP^\angle$ without any red odd-cycles but two red even-cycles crossing $\Sigma$. Then the proof is complete. 
   	\end{proof}

There are three similar results given as Theorem~\ref{RGB2-thm:EPNScolorable}(b) in~\cite{Liu2023II}, Theorem~\ref{RGB3-thm:allGraphsSimilarToEP} and Remark~\ref{RGB3-re:MaPlus} in~\cite{Liu2023III}. These results follow a specific pattern: Given a particular $(EP;T\!D)\in e\mathcal{MPGN}4$, we consider a new MPG $(EP;T\!D^{\rm{new}})$.  The new MPG includes the same $\Sigma'$ as $(EP;T\!D)$ and a new $T\!D^{\rm{new}}$ with either $|T\!D^{\rm{new}}|=|T\!D|$ or $|T\!D|+1$. Once the author conjectured that $EP^\perp$ belongs to $e\mathcal{MPGN}4$. However, this conjecture turned out to be false. So far, the author has not found any $T\!D^{\rm{new}}$ that makes $(EP;T\!D^{\rm{new}})$ non-4-colorable.  Searching for non-4-colorable $(EP;T\!D^{\rm{new}})$'s might be an interesting subject.

\bibliographystyle{amsplain}

\end{document}